\documentclass[preprint,1p]{elsarticle}

\makeatletter
 \def\ps@pprintTitle{%
 	\let\@oddhead\@empty
 	\let\@evenhead\@empty
 	\def\@oddfoot{\footnotesize\itshape
 		{} \hfill\today}%
 	\let\@evenfoot\@oddfoot
 }
\makeatother

\usepackage{lipsum} % for mock text
\ExplSyntaxOn
\NewDocumentEnvironment{multiequation}{b}
 {
  \vantiempham:n { #1 }
 }
 {}

\seq_new:N \l__vantiempham_md_seq

\cs_new_protected:Nn \vantiempham:n
 {
  \seq_set_split:Nnn \l__vantiempham_md_seq { \\ } { #1 }
  $$
  \seq_map_function:NN \l__vantiempham_md_seq \__vantiempham_md_item:n
  $$
 }
\cs_new_protected:Nn \__vantiempham_md_item:n
 {
  \begin{minipage}{\dim_eval:n {\displaywidth/(\seq_count:N \l__vantiempham_md_seq)} }
  \begin{equation}
  \cs_set_eq:Nc \label { ltx@label }
  #1
  \vphantom{\seq_use:Nn \l__vantiempham_md_seq {}}
  \end{equation}
  \end{minipage}
 }

\ExplSyntaxOff  
\usepackage[unicode]{hyperref}
\usepackage{enumitem}
\makeatletter
\providecommand*{\cupdot}{%
  \mathbin{%
    \mathpalette\@cupdot{}%
  }%
}
\newcommand*{\@cupdot}[2]{%
  \ooalign{%
    $\m@th#1\cup$\cr
    \hidewidth$\m@th#1\cdot$\hidewidth
  }%
}
\makeatother

%%%%%%%%%%%%%%%%%%%%%%%%%%%%%%%%%%%%%%%
%Per cancellare 
\usepackage[makeroom]{cancel}
 
\usepackage{soul}
%\usepackage{tikz}

%%%%%%%%%%%%%%%%%%%%%%%%%%%%%%%%%%%%%%%

\usepackage{latexsym}
\usepackage{indentfirst}
\usepackage{amsxtra}
\usepackage{amssymb}
\usepackage{amsthm}
\usepackage{amsmath}
\usepackage{mathrsfs} 
%%%

%%%%%%%
\usepackage{xcolor}
\usepackage{color}
%%%%%%%

\usepackage{amsfonts}
\usepackage[capitalise]{cleveref}

\numberwithin{equation}{section}

\newtheorem{theor}{Theorem}[section]
\newtheorem{prop}[theor]{Proposition}
\newtheorem{lemma}[theor]{Lemma}
\newtheorem{convention}[theor]{Convention}
\newtheorem{cor}[theor]{Corollary}
\theoremstyle{definition}               %stile roman 
\newtheorem{defin}[theor]{Definition}
\newtheorem{ex}[theor]{Example}
\newtheorem{exs}[theor]{Examples}
\newtheorem{rem}[theor]{Remark}

%-----------------------------------------------------------------------------

\newcommand{\vd}{\vdash}
\newcommand{\dv}{\dashv}
\newcommand{\overeq}[1]{\overset{#1}{=}}
\newcommand{\tr}{\triangleright}

\DeclareMathOperator{\Sym}{Sym}
\DeclareMathOperator{\Aut}{Aut}
\DeclareMathOperator{\Conj}{Conj}

\DeclareMathOperator{\End}{End}
\DeclareMathOperator{\Map}{Map}
\DeclareMathOperator{\id}{id}

\DeclareMathOperator{\lcm}{lcm}

%%NUMERAZIONE PAGINE
\usepackage{lastpage}% for last page

\pagestyle{myheadings}
\markboth{}{}

\makeatletter
\def \@evenhead {\thepage\ of \pageref{LastPage} \hfil \slshape \leftmark } % added <<<<
\def \@oddhead {{\slshape \rightmark }\hfil \thepage\ of \pageref{LastPage}} % added <<<<
\makeatother

\begin{document}
\begin{frontmatter}

\title{Generalized digroups, di-skew braces, and solutions of the set-theoretic Yang--Baxter equation}
	\tnotetext[mytitlenote]{This work was partially supported by the Dipartimento di Matematica e Fisica ``Ennio De Giorgi'' - Università del Salento. The authors are members of GNSAGA (INdAM).	
}

%% Group authors per affiliation: 
\author{Andrea ALBANO}
\ead{andrea.albano@unisalento.it}

\author{Paola STEFANELLI}
\ead{paola.stefanelli@unisalento.it}

\address{Dipartimento di Matematica e Fisica ``Ennio De Giorgi", Universit\`a del Salento, \\ Via Provinciale Lecce-Arnesano, 73100 Lecce (Italy)}

\begin{abstract}
We introduce a novel algebraic structure called \emph{di-skew brace} by which we show that generalized digroups systematically yield bijective, non-degenerate solutions to the set-theoretic Yang--Baxter equation. 
We study the structural properties of these solutions with a particular focus on their left derived shelves, which belong to the class of \emph{conjugation racks}.
Consistently, we show that these solutions belong to a broader class that includes skew brace solutions.
In particular, we prove that each such solution can be decomposed as a \emph{hemi-semidirect} product of a skew brace solution endowed with a certain compatible action on the idempotents of the associated di-skew brace structure.
Finally, we provide concrete instances of these solutions through a suitable notion of \emph{averaging operators} on groups.
\end{abstract}

\begin{keyword}
Yang--Baxter equation \sep set-theoretic solution \sep generalized digroup \sep di-skew brace \sep skew brace \sep brace \sep averaging operator
 
\MSC[2020]  { 16T25\sep 81R50 \sep 16Y99 \sep 20N99 \sep 20M99  \sep 17B38}
\end{keyword}
% ----------------------
\end{frontmatter}

%**********************************
\section*{Introduction}
%**********************************

The problem of studying solutions of the set-theoretic Yang--Baxter equation first arose in a paper by Drinfel'd \cite{Dr92} and since then has attracted much interest with the aim of determining their algebraic properties and towards the task of classifying them.
Specifically, if $D$ is a set, then a map $r:D\times D\to D\times D$ is said to be  a \emph{solution of the set-theoretic Yang--Baxter equation on $D$} (briefly a \emph{solution}) if the identity
\begin{align}\label{ybe_eq}\tag{YBE}
\left(r\times\id_D\right)\left(\id_D\times r\right)\left(r\times\id_D\right)
=
\left(\id_D\times r\right)\left(r\times\id_D\right)\left(\id_D\times r\right)
\end{align}
is satisfied.
One of the most common way to encode the braiding of a solution $r$ on $D$ is to denote it by $\left(D,r\right)$ and write $r\left(a, b\right) = \left(\lambda_{a}\left(b\right), \rho_{b}\left(a\right)\right)$, where $\lambda_{a}$ and $\rho_{b}$ are maps from $D$ into itself, for all $a,b\in D$. 
To avoid overloading the notation, these maps are used without further specifications on the particular sets where they are defined, which will be clear from the context.
With this notation, a straightforward computation ensures that \eqref{ybe_eq} holds if and only if the following identities
\begin{align}
     &\label{first} \lambda_a\lambda_b(c)=\lambda_{\lambda_a\left(b\right)}\lambda_{\rho_b\left(a\right)}\left(c\right)\tag{Y1}\\
    &  \label{second}\lambda_{\rho_{\lambda_b\left(c\right)}\left(a\right)}\rho_c\left(b\right)=\rho_{\lambda_{\rho_b\left(a\right)}\left(c\right)}\lambda_a\left(b\right)\tag{Y2}\\
    &\label{third}\rho_c\rho_b(a)=\rho_{\rho_c\left(b\right)}\rho_{\lambda_b\left(c\right)}\left(a\right)\tag{Y3}\end{align} 
hold, for all $a,b,c \in D$. Moreover, a solution $r$ is \textit{left non-degenerate} if $\lambda_a \in \Sym_D$, for all $a \in D$, \textit{right non-degenerate} if $\rho_a \in \Sym_D$, for all $a \in D$, and \emph{non-degenerate} if it is both left and right non-degenerate. 
In addition, a solution $r$ is \textit{bijective} if it is a permutation of $D\times D$. 

Bijective non-degenerate solutions are among the first on which algebraic approaches have been applied to obtain significant results, for example, as in \cite{ESS99}, \cite{GaMa08}, \cite{GaVa98}, \cite{LuYZ00}, and \cite{So00}. 
{Even though in the latter and many other related works bijectivity is considered an assumption, it is now known that a non-degenerate solution is automatically bijective, thanks to \cite{JePi25}.}
More recent papers shifted the attention to broader classes of solutions in an effort to drop one non-degeneracy assumption. In particular, \cite{Le18} shows how self-distributive structures, commonly named \emph{shelves}, turn out to be relevant for a deeper understanding of left non-degenerate solutions. If $D$ is a non-empty set and $\triangleright$ a binary operation on $D$, then the pair $\left(D,\triangleright\right)$ is a \emph{shelf} if the identity
$a\triangleright\left(b\triangleright c\right) = \left(a\triangleright b\right)\triangleright\left(a\triangleright c\right)$ is satisfied, for all $a,b,c\in D$.
In this framework, the notion of Drinfel'd twist connected to a shelf structure has been investigated in \cite{DoRySte24} and has allowed to provide a description of all left non-degenerate solutions.

The main aim of this work is to provide bijective non-degenerate solutions in terms of Drinfel'd twists on \emph{conjugation racks}, which are self-distributive structures naturally associated with \emph{digroups}.
More generally, we recall that a \emph{disemigroup} ${(D,\vd,\dv)}$ is the datum of a set $D$ equipped with two binary associative operations satisfying certain mixed associativity conditions, i.e.,
\begin{align*}
    (a \vd b) \dv c \,&=\, a \vd (b \dv c)\,,\\[0.3cm]
    (a \vd b) \vd c = (a \dv b) \vd c \qquad&\text{and}\qquad
    a \dv (b \dv c) = a \dv (b \vd c) \,, 
\end{align*}
for all $a,b,c \in D$.
Moreover, a digroup is a disemigroup characterized by the existence of unilateral inverse elements.
Digroups were independently introduced in \cite{Fel04}, \cite{Liu03}, and found successful application in \cite{Kin07} to determine a partial solution to the so-called \emph{coquecigrue} problem, cf. \cite{Lod92}, \cite{Lod01}.
The structure of digroups was slightly generalized in \cite{Diaz16} to that of $g$-digroups and have been the object of study in some recent articles, for instance, \cite{Ong20}, \cite{Marin-Gaviria25}, \cite{Diaz19}, \cite{Rodriguez23}, \cite{Diaz21}, \cite{Rodriguez21}, \cite{Smith22}, \cite{ZuPiZu24}.

To systematically determine families of Drinfel'd twists in pairs with conjugation racks, we introduce a ``skew brace"-like structure having a $g$-digroup as underlying structure. A \emph{{\rm(}left{\rm)} di-skew brace} is a quadruple $(D,\vd,\dv,\circ)$ where $(D, \vd, \dv)$ is a $g$-digroup, $(D, \circ)$ is a right group, and the following left distributivity laws 
\begin{align*}
        a \circ (b \vd c) =
        a \circ b \vd a^{-1} \vd a \circ c
        \ \qquad\text{and}\ \qquad
        a \circ (b \dv c) =
        a \circ b \dv a^{-1} \dv a \circ c\,,
\end{align*}
as well as the identity
$$
(a \vd b) \circ c = (a \dv b) \circ c \,,
$$
hold, for all $a,b,c \in D$, where $a^{-1}$ denotes the inverse of $a$ with respect to the operations $\vd$, $\dv$.
Let us point out that, in our writing,  we name $(D,\vd,\dv)$ the \emph{additive} structure and $(D,\circ)$ the \emph{multiplicative} one.
Moreover, we implicitly assume that multiplication has higher precedence than the sums. 
An interesting aspect that emerges from our investigations is the fact that the solutions determined by di-skew braces are generally not equivalent to those coming from skew braces \cite{GuVe17}. From a much broader perspective, we show that di-skew brace solutions generally are not even Drinfel'd isomorphic to skew brace solutions.

\smallskip

In \cref{Sec1}, we give the necessary basics on $g$-digroups and focus on a structural description that is helpful throughout the paper. In addition, we recall some basics on solutions and on the role of shelves in the study of left non-degenerate solutions.
In this context, we provide a general method to compute the order of a bijective left non-degenerate solution that involves only the left multiplication operators related to their associated racks. 
In \cref{Sec2}, we introduce di-skew braces and focus on studying those structural properties that permit us to show how they determine bijective non-degenerate solutions. 
In particular, to prove the right non-degeneracy of di-skew brace solutions, we translate in our language a result provided in \cite{Ru19} and hence show that the so-called \emph{square map} (or \emph{diagonal map}) of such a solution is bijective.
To conclude this section, we observe that di-skew brace solutions are Drinfel'd twists of non-necessarily idempotent racks in the sense of \cite{DoRySte24}, contrary as to what happens for skew brace solutions that are Drinfel'd twists of conjugation quandles.
In \cref{Sec3}, we focus on classes of examples that naturally come from a notion of compatible averaging operators on groups.
Let us note that averaging operators first appeared in fluid dynamics and invariant theory under the name of \emph{Reynolds operator} and have been recently studied for their connections with racks and Rota--Baxter operators, see for example \cite{Bar24x}, \cite{ChMa25x}, \cite{Das24x} and \cite{ZhGa24x} and the references therein. 
In \cref{Sec4}, we show that di-skew brace solutions turn out to lie in the class of dynamical extensions of solutions as in \cite{CasCatSte22}, once one adopts a point of view purely in terms of their Drinfel'd twists and associated racks.
In this manner, we offer a perspective that sets a direct link between the dynamical extensions of self-distributive structures as in \cite{AndGr03} and left non-degenerate solutions.
Specifically, we prove that if $(D,\vd,\dv,\circ)$ is a di-skew brace and $r_D$ its associated solution, then there exists a skew brace structure $G$ such that $r_D$ is a hemi-semidirect product $r_D = r_G \,\text{\scalebox{0.9}{${\dv\vd}$}$_{\psi,\sigma}$}\, r_E$, where $E$ is the set of idempotents of $D$ and $(\psi,\sigma)$ is a hemi-semidirect pair on $G$ with coefficients in $E$. This result is strictly connected to a similar decomposition  $D = G \,\text{\scalebox{0.9}{$\dv\vd$}}\, E$ that holds for a $g$-digroup $D$ (as shown in \cref{Sec1}).
As a final application, we employ the methods above to obtain necessary and sufficient conditions ensuring that a di-skew brace solution $r_D$ has finite order and, in this case, we explicitly compute it. Based on these results, we obtain that di-skew braces solutions are generally not Drinfel'd isomorphic to skew brace solutions, a distinction that further motivates a future deeper exploration of the structures introduced here.

\smallskip

\section{Basics}\label{Sec1}
The present section serves to introduce basic notions on generalized digroups and to fix the notation that will be used throughout the text.
Moreover, we provide some basic facts on solutions of the set-theoretic Yang--Baxter equation with a focus on their relationship with self-distributive structures.
For the sake of completeness, let us note that the term ``digroup'' already appears in \cite{BoFaPo23} and other works connected to skew braces, although with a reference to a structure that is different from ours.

\smallskip

\subsection{Generalized digroups}

\begin{defin}[cf. \cite{Lod01}]\label{def:digroups}
    A \emph{disemigroup} is a set $D$ equipped with two associative binary operations $\vd$, $\dv$ : $D \times D \to D$ satisfying the following conditions:
    \begin{align}
        a \vd (b \dv c) &= (a \vd b) \dv c \,,\quad \text{(inner associativity)} \label{inn_assoc} \\
        a \dv (b \vd c) &= a \dv (b \dv c) \,,\quad \text{(right bar-side irrelevance)} \label{rbarside} \\
        (a \vd b) \vd c&= (a \dv b) \vd c \,,\quad \text{(left bar-side irrelevance)} \label{lbarside}
    \end{align}
    for all $a,b,c \in D$.
    A \emph{dimonoid} is a disemigroup $(D,\vd,\dv)$ admitting an element $e \in D$ called a \emph{bar-unit} that satisfies
    \begin{align}
        \forall\,a \in D \quad e \vd a = a = a \dv e \,.\label{bar-unit}
    \end{align}
    The set of all bar-units in $D$ is called the \emph{halo} of $D$ and is denoted as ${E(D,\vd,\dv)}$.
    The notion of \emph{subdisemigroups} applies in this context in a natural way; in particular, if $S$ is a subdisemigroup of a disemigroup $D$ then $E(S) = E(D) \cap S$.
\end{defin}

In a similar vein of \cite{Smith22}, we introduce the following definition.

\begin{defin}
    A \emph{generalized digroup} ($g$-digroup for simplicity) is a dimonoid $(D,\vd,\dv)$ such that for all $a \in D$ and all $e \in E(D)$ there exist  a unique pair of elements $I_e(a)$, $J_e(a) \in D$ satisfying the following identities:
    \begin{align}
        I_e(a) \dv a = e = a \vd J_e(a) \,.\label{def:inverse}
    \end{align}
    In what follows, for all $e \in E(D)$ and $a \in D$ it is useful to denote $I_e(a)$ as $a^{I_e}$ and $J_e(a)$ as $a^{J_e}$.
\end{defin}

A homomorphism of $g$-digroups is just a homomorphism of the underlying disemigroups.
It is worth to remark the fact that if $f:(D,\vd_1,\dv_1) \to (E,\vd_2,\dv_2)$ is a homomorphism of $g$-digroups, then $f(D,\vd_1,\dv_1)$ is a generalized subdigroup of $(E,\vd_2,\dv_2)$ and $f$ maps bar-units of $D$ to bar-units of $f(D)$.
Moreover, if $e \in E(D,\vd_1,\dv_1)$ and $a \in D$ then
\begin{align*}
        f(a^{I_e}) = f(a)^{I_{f(e)}} 
        \qquad \text{and}\qquad
        f(a^{J_e}) = f(a)^{J_{f(e)}}.
    \end{align*}

\begin{rem}\label{rem:dig-group}
    If $(D,\vd)$ is a group then $(D,\vd,\vd)$ is clearly a $g$-digroup.
    Conversely, if $(D,\vd,\dv)$ is a $g$-digroup such that $\vd\ =\ \dv$ then $(D,\vd) \ = \ (D,\dv)$ is a group.
\end{rem}

\begin{ex}\label{ex:digroup}
    Let $G$ be a group endowed with a right action $\psi:G \to \Sym(E)$ on a non-empty set $E$.
    On $G \times E$ define two binary operations $\vd,\dv$ by setting
    \begin{align*}
        (g,e) \vd (h,f) = (gh,f) 
        \,,\quad 
        (g,e) \dv (h,f) = (gh,\psi_h(e)) \,,
    \end{align*}
    for all $g,h \in G$, $e,f \in E$.
    Then, the structure $(G \times E,\vd,\dv)$ is a $g$-digroup.
\end{ex}

The example above can actually be promoted to a structural result that concerns the whole class of $g$-digroups.
Specifically, as already shown in \cite{Kin07} and \cite{Diaz16}, if $(D,\vd,\dv)$ is a $g$-digroup then there exists a group $G$ and a non-empty set $E$ equipped with a right action of $G$ such that $(D,\vd,\dv)$ is isomorphic to the $g$-digroup $G \times E$ as constructed above.

In what follows, we propose an intrinsic reformulation of this fact by reframing it in a vest that better suits our purposes.
In particular, we show that $(D,\vd)$ is a right group while $(D,\dv)$ is a left group satisfying a certain compatibility property.

\medskip

Let us recall that a semigroup $S$ is a \emph{right group} if for all $a,b \in S$ there exists a unique $x\in S$ such that $ax = b$. Equivalently, $S$ is a right group if all left multiplication operators $l_a:S\to S, x\mapsto ax$ are bijective, for all $a \in S$. Left groups are dual to right groups.
Moreover, from the general theory of semigroups we note that if $S$ is a right group, if $E(S)$ is the set of its idempotents and if we fix $e\in E(S)$, then $G = Se$ is a subgroup of $S$ having $e$ as unit and $S$ is a direct product of $G$ and $E(S)$, i.e. satisfying $G\cap E(S) = \{e\}$. We refer the reader to \cite{ClPr61} for more details. 

\begin{prop}[cf. \cite{Diaz16}]\label{prop:gdigroup_properties}
    Let $(D,\vd,\dv)$ be a $g$-digroup. Then, the following identities hold:
    \begin{enumerate}
        \item[1)] 
        $(a^{I_e})^{I_e} =
            e \dv a =
            (a^{J_e})^{I_e}$ 
            \ and \ 
             $(a^{J_e})^{J_e} =
            a \vd e =
            (a^{I_e})^{J_e}$;
        \item[2)] 
        $a^{I_e} \vd b =
            a^{J_\xi} \vd b  
        $ \ and \ $a \dv b^{I_e} =
            a \dv b^{J_\xi} 
        $;
        \item[3)] 
        $(a \vd b)^{I_e} = b^{I_e} \dv a^{I_e} =
            (a \dv b)^{I_e}$ 
            \ and \
            $(a \vd b)^{J_e} =
            b^{J_e} \vd a^{J_e} =
            (a \dv b)^{J_e} \,,$
    \end{enumerate}
    for all $a,b \in D$ and $e, \xi \in E(D)$.
\end{prop}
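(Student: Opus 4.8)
The plan is to reduce all three statements to a single lemma about the two translation maps and then read the conclusions off from it. For each $a\in D$ write $L_a\colon D\to D$, $x\mapsto a\vd x$, and $R_a\colon D\to D$, $x\mapsto x\dv a$. The key claim is that $L_a$ and $R_a$ are bijections, and that for \emph{every} bar-unit $e$ one has $L_a^{-1}=L_{a^{I_e}}=L_{a^{J_e}}$ and $R_a^{-1}=R_{a^{I_e}}=R_{a^{J_e}}$. I expect this lemma to carry all the weight: it is precisely here that the bar-side irrelevance axioms \eqref{rbarside} and \eqref{lbarside} must be used to convert a one-sided inverse relation from \eqref{def:inverse} into a two-sided inverse of a translation, and it is the observation that $L_a$ and $R_a$ do not mention any bar-unit that forces their inverses to be independent of $e$.

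To prove the lemma I would fix a bar-unit $e$ and compute, for all $x\in D$, $a^{I_e}\vd(a\vd x)=(a^{I_e}\vd a)\vd x=(a^{I_e}\dv a)\vd x=e\vd x=x$, using associativity of $\vd$, left bar-side irrelevance \eqref{lbarside}, the relation $a^{I_e}\dv a=e$ from \eqref{def:inverse}, and \eqref{bar-unit}; dually $a\vd(a^{J_e}\vd x)=(a\vd a^{J_e})\vd x=e\vd x=x$. Thus $L_a$ has $L_{a^{I_e}}$ as a left inverse and $L_{a^{J_e}}$ as a right inverse, so it is bijective and $L_{a^{I_e}}=L_{a^{J_e}}=L_a^{-1}$. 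The symmetric computation $(x\dv a^{I_e})\dv a=x\dv(a^{I_e}\dv a)=x\dv e=x$ and $(x\dv a)\dv a^{J_e}=x\dv(a\dv a^{J_e})=x\dv(a\vd a^{J_e})=x\dv e=x$, now invoking right bar-side irrelevance \eqref{rbarside}, shows $R_a$ is bijective with $R_{a^{I_e}}=R_{a^{J_e}}=R_a^{-1}$. Since $L_a$ and $R_a$ are independent of the chosen bar-unit, so are these inverses; this already gives statement 2), because $a^{I_e}\vd b=a^{J_\xi}\vd b=L_a^{-1}(b)$ and $a\dv b^{I_e}=a\dv b^{J_\xi}=R_b^{-1}(a)$ for all bar-units $e,\xi$.

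Statement 1) then falls out by reading each defining equation through the appropriate inverse translation. Indeed $(a^{I_e})^{I_e}$ is the unique $z$ with $z\dv a^{I_e}=e$, i.e. $R_{a^{I_e}}(z)=e$; since $R_{a^{I_e}}=R_a^{-1}$ this gives $z=R_a(e)=e\dv a$, and the same reading with $R_{a^{J_e}}=R_a^{-1}$ yields $(a^{J_e})^{I_e}=e\dv a$. Symmetrically, $(a^{J_e})^{J_e}$ and $(a^{I_e})^{J_e}$ are characterised by $L_{a^{J_e}}(z)=e$ and $L_{a^{I_e}}(z)=e$ respectively, whence both equal $L_a(e)=a\vd e$. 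No further computation is needed here, only the uniqueness clause of \eqref{def:inverse}.

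It remains to settle statement 3), which I would verify by exhibiting the candidate inverse and appealing to uniqueness. For the $I$-part, I compute $(b^{I_e}\dv a^{I_e})\dv(a\vd b)=b^{I_e}\dv(a^{I_e}\dv(a\vd b))$; the inner factor simplifies by right bar-side irrelevance \eqref{rbarside} and associativity to $a^{I_e}\dv(a\dv b)=(a^{I_e}\dv a)\dv b=e\dv b$, and then $b^{I_e}\dv(e\dv b)=(b^{I_e}\dv e)\dv b=b^{I_e}\dv b=e$, so $(a\vd b)^{I_e}=b^{I_e}\dv a^{I_e}$; the value of $(a\dv b)^{I_e}$ is the same since \eqref{rbarside} makes the computation insensitive to whether the inner product is $a\vd b$ or $a\dv b$. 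The $J$-part is the mirror image: $(a\vd b)\vd(b^{J_e}\vd a^{J_e})=a\vd((b\vd b^{J_e})\vd a^{J_e})=a\vd(e\vd a^{J_e})=a\vd a^{J_e}=e$, and the equality with $(a\dv b)^{J_e}$ is immediate from left bar-side irrelevance \eqref{lbarside}. The only genuinely delicate point in the whole argument is the opening lemma; once the translations are bijections with bar-unit-independent inverses, everything else is bookkeeping with \eqref{def:inverse}.
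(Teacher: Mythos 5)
Your proposal is correct, and every step checks out: the translation-operator lemma ($L_{a^{I_e}}=L_{a^{J_e}}=L_a^{-1}$ and $R_{a^{I_e}}=R_{a^{J_e}}=R_a^{-1}$, independent of the bar-unit) is proved exactly from \eqref{lbarside}, \eqref{rbarside}, \eqref{bar-unit}, \eqref{def:inverse} and the two associativities, and parts 1)--3) then follow from the uniqueness clause of \eqref{def:inverse} as you describe. Note, however, that the paper itself offers no proof of this proposition at all: it is stated with the attribution ``cf.~\cite{Diaz16}'' and the argument is deferred to the literature, so there is no in-paper proof to compare against. Your route is nonetheless well aligned with how the paper actually uses the result: the subsequent proof of \cref{g-digroup_decomp} silently relies on the identity $a \vd (a^{I_e} \vd b) = b$, which is precisely your statement that $L_{a^{I_e}}$ is a two-sided inverse of $L_a$, and \cref{rem:inverses} is exactly the bar-unit-independence your lemma makes explicit. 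What your approach buys is economy: a single lemma about two families of bijections replaces three separate uniqueness verifications, and it makes transparent \emph{why} part 2) holds (the maps $L_a$, $R_a$ never mention a bar-unit, so neither can their inverses), rather than obtaining it as an incidental computation.
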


\begin{rem}\label{rem:inverses}
    Thanks to $2)$ in \cref{prop:gdigroup_properties}, the action of unilateral inverses on the bar side of expressions such as $a^{I_e} \vd b$ or $a \dv b^{J_f}$, with $a,b \in D$, does not depend on the specific bar-unit or side chosen.
    Therefore, in what follows we will write $a^{-1} \vd b$ and $a \dv b^{-1}$ to denote these expressions, respectively, except when a choice is needed in order to accomplish a computation.
\end{rem}

\begin{prop}\label{g-digroup_decomp}
    Let $(D,\vd,\dv)$ be a $g$-digroup.
    Then the following hold:
    \begin{enumerate}
        \item[1)] $(D,\,\vd)$ is a right group and $(D,\dv)$ is a left group.
        \item[2)] The bar-units are the only idempotents, i.e.,
        $E(D,\,\vd) = E(D) = E(D,\,\dv)$.
    \end{enumerate}
    \begin{proof}
        Let us first prove that $(D,\vd)$ is a right group.
        Fix $a,b \in D$, choose a bar-unit $e \in E(D)$ and set $x = a^{I_e} \vd b$.
        Then, it follows that $a \vd x = (a \vd a^{I_e}) \vd b = b$.
        Moreover, if $x' \in D$ is another element such that $a \vd x' = b$, then
        \[
            x' = e \vd x' = (a^{I_e} \vd a) \vd x' = a^{I_e} \vd b = x \,,
        \]
        thus proving the assertion.
        A dual line of reasoning shows that $(D,\dv)$ is a left group.

        To prove the second point, notice first that the bar-units of $D$ are already idempotent both in $(D,\vd)$ and $(D,\dv)$.
        Moreover, if $e \in E(D,\vd)$ and $a \in D$, then from the identity ${e \vd a = e \vd (e \vd a)}$ it follows that $e \vd a = a$, thanks to left cancellativity of $(D,\vd)$, thus proving that $E(D) = E(D,\vd)$.
        In an analogous manner one can show that ${E(D) = E(D,\dv)}$.
    \end{proof}
\end{prop}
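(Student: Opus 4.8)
The plan is to prove the two statements in the order given, since the second relies on the cancellativity furnished by the first. For part 1), I would show directly that for fixed $a,b\in D$ the equation $a\vd x=b$ admits a unique solution, which is exactly the assertion that every left translation of $(D,\vd)$ is bijective, i.e. that $(D,\vd)$ is a right group. For existence, after fixing any bar-unit $e\in E(D)$ I would set $x=a^{J_e}\vd b$ and compute $a\vd x=(a\vd a^{J_e})\vd b=e\vd b=b$, using associativity of $\vd$, the defining identity \eqref{def:inverse} for $a^{J_e}$, and the bar-unit axiom \eqref{bar-unit}. For uniqueness, if $a\vd x'=b$ then the chain
\[x'=e\vd x'=(a^{I_e}\dv a)\vd x'=(a^{I_e}\vd a)\vd x'=a^{I_e}\vd(a\vd x')=a^{I_e}\vd b\]
pins down $x'$, where the third equality is the left bar-side irrelevance \eqref{lbarside} read from right to left, and the second uses \eqref{def:inverse}; part 2) of \cref{prop:gdigroup_properties} then guarantees $a^{I_e}\vd b=a^{J_e}\vd b=x$, so $x'=x$. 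The claim that $(D,\dv)$ is a left group follows by the exactly dual computation, solving $x\dv a=b$ with $x=b\dv a^{I_e}$ and invoking the right bar-side irrelevance \eqref{rbarside} in place of \eqref{lbarside} for the uniqueness half.

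For part 2), the inclusion $E(D)\subseteq E(D,\vd)\cap E(D,\dv)$ is immediate, since a bar-unit $e$ satisfies $e\vd e=e$ and $e\dv e=e$ by \eqref{bar-unit}. The substance is the reverse inclusion, so I would take $f\in E(D,\vd)$, that is $f\vd f=f$, and prove that $f$ is in fact a bar-unit. First, from $f\vd(f\vd a)=(f\vd f)\vd a=f\vd a$ together with the left cancellativity of the right group $(D,\vd)$ established in part 1), I obtain $f\vd a=a$ for all $a$, so $f$ is a left identity for $\vd$.

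The delicate step — and the one I expect to be the main obstacle — is upgrading this one-sided identity to the full bar-unit property, namely $a\dv f=a$ for all $a$. Here I would exploit the idempotency of $f$ together with the right bar-side irrelevance, writing $a\dv f=a\dv(f\vd f)=a\dv(f\dv f)=(a\dv f)\dv f$, where the middle equality is \eqref{rbarside} and the last is associativity of $\dv$. This exhibits the identity $(a\dv f)\dv f=a\dv f$, from which right cancellativity of the left group $(D,\dv)$ — again furnished by part 1) — yields $a\dv f=a$. Thus $f$ is simultaneously a left $\vd$-identity and a right $\dv$-identity, hence a bar-unit, giving $E(D,\vd)\subseteq E(D)$ and therefore $E(D,\vd)=E(D)$; the equality $E(D,\dv)=E(D)$ follows by the symmetric argument interchanging $\vd$ and $\dv$, and \eqref{lbarside} and \eqref{rbarside}. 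The only genuinely non-formal point in the whole proof is this cancellation trick turning a single idempotency relation into a two-sided identity; everything else reduces to direct manipulation of the disemigroup axioms and the inverse identities \eqref{def:inverse}.
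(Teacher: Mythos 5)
Your proof is correct and takes essentially the same route as the paper's: part 1) by showing that $a \vd x = b$ has a unique solution, part 2) by playing idempotency against the cancellativity furnished by part 1). Two points of comparison are worth recording. First, a cosmetic one: for existence you set $x = a^{J_e} \vd b$, so that $a \vd x = (a \vd a^{J_e}) \vd b = e \vd b = b$ follows directly from \eqref{def:inverse}, whereas the paper sets $x = a^{I_e} \vd b$ and its computation $(a \vd a^{I_e}) \vd b = b$ silently uses \eqref{lbarside} together with item 2) of \cref{prop:gdigroup_properties}; the two choices of $x$ agree by that same item, as you note. Second, and more substantively: the step you single out as delicate is precisely the one the paper's proof passes over. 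The paper derives $e \vd a = a$ for $e \in E(D,\vd)$ by left cancellation and then asserts $E(D,\vd) = E(D)$, but a bar-unit must also satisfy $a \dv e = a$ by \eqref{bar-unit}, and this half is not addressed there. Your argument --- $a \dv e = a \dv (e \vd e) = a \dv (e \dv e) = (a \dv e) \dv e$ by idempotency, \eqref{rbarside} and associativity of $\dv$, followed by right cancellation in the left group $(D,\dv)$ --- supplies exactly this missing piece, so your write-up is, if anything, more complete than the printed proof.
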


Let $(D,\vd,\dv)$ be a $g$-digroup and fix a bar-unit $\xi \in D$.
If we set $G = D \vd \xi$ and $H = \xi \dv D$, then \cref{g-digroup_decomp} ensures that $G$ is a subgroup of $(D,\vd)$ while $H$ is a subgroup of $(D,\dv)$, both admitting $\xi$ as neutral element.
Moreover, for all elements $a \in D$ there exist  a unique pair $(g_a,h_a) \in G \times H$ such that the following decompostion hold:
\begin{align*}
    g_a \vd e_a = a = f_a \dv h_a \,.
\end{align*}
The two decompositions above are intertwined in a way that is described in the proposition below.
In order to distinguish one from the other, hereinafter we will call $g_a$ (resp. $h_a$) the left (resp. right) groupal component of $a$ and $e_a$ (resp. $f_a$) the left (resp. right) idempotent component of $a$.

\begin{prop}
    Let $(D,\vd,\dv)$ be a $g$-digroup, fix $\,\xi \in E(D)$ and consider the subgroups $G = D \vd \xi$ and $H = \xi \dv D$ of $(D,\vd)$ and $(D,\dv)$, respectively.
    If an element $a \in D$ admits the decomposition $g_a\vd e_a = a = f_a \dv h_a$ into its groupal and idempotent components, then the following holds:
    \begin{align*}
        \begin{cases}
            g_a = h_a \vd \xi \\
            e_a = h_a^{-1} \vd f_a \dv h_a
        \end{cases}
        \quad\text{and}\qquad
        \begin{cases}
            h_a = \xi \dv g_a \\
            f_a = g_a \vd e_a \dv g_a^{-1}
        \end{cases}
    \end{align*}
\end{prop}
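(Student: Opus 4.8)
The plan is to turn both decompositions of $a$ into explicit projection formulas and then read off the four identities. First I would use that $(D,\vd)$ is a right group and $(D,\dv)$ a left group (\cref{g-digroup_decomp}), together with the fact that distinct bar-units act as right (resp.\ left) zeros, i.e.\ $e \vd f = f$ and $e \dv f = e$ for all $e,f \in E(D)$ by \eqref{bar-unit}. Multiplying the decomposition $a = g_a \vd e_a$ on the right by $\xi$ and using associativity then collapses the idempotent factor, so that
$$g_a = a \vd \xi \qquad\text{and, dually,}\qquad h_a = \xi \dv a,$$
while $e_a = g_a^{-1}\vd a$ and $f_a = a \dv h_a^{-1}$, where $g_a^{-1}$, $h_a^{-1}$ are the inverses inside the groups $G$ and $H$.

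With these formulas the two ``crossed'' identities follow at once from bar-side irrelevance: by \eqref{lbarside} and \eqref{bar-unit} we get $h_a \vd \xi = (\xi \dv a)\vd \xi = (\xi \vd a)\vd \xi = a\vd \xi = g_a$, and symmetrically by \eqref{rbarside} and \eqref{bar-unit} we get $\xi \dv g_a = \xi \dv(a \vd \xi) = \xi \dv(a \dv \xi) = \xi \dv a = h_a$.

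The substantive step, which I expect to be the main obstacle, is the pair of identities for $e_a$ and $f_a$: here $e_a$ is produced by the $\vd$-inverse $g_a^{-1}$, whereas the target expression $h_a^{-1}\vd f_a \dv h_a$ is written through the $\dv$-inverse $h_a^{-1}$ (and symmetrically for $f_a$), so I must transport between the two notions of inverse. The bridge is to recognise that these group inverses are in fact unilateral inverses of $a$ itself: from $g_a \vd g_a^{-1} = \xi$ and $g_a = a\vd\xi$ one obtains $a \vd g_a^{-1} = \xi$, whence $g_a^{-1} = a^{J_\xi}$ by the uniqueness in \eqref{def:inverse}; dually $h_a^{-1}\dv a = \xi$ gives $h_a^{-1} = a^{I_\xi}$.

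Finally I would collapse the target expressions using inner associativity \eqref{inn_assoc} and the defining decompositions $f_a \dv h_a = a$, $g_a \vd e_a = a$:
$$h_a^{-1}\vd f_a \dv h_a = h_a^{-1}\vd a = a^{I_\xi}\vd a \qquad\text{and}\qquad g_a \vd e_a \dv g_a^{-1} = a \dv g_a^{-1} = a \dv a^{J_\xi}.$$
Matching these against $e_a = g_a^{-1}\vd a = a^{J_\xi}\vd a$ and $f_a = a\dv h_a^{-1} = a\dv a^{I_\xi}$ is then exactly the bar-side irrelevance of unilateral inverses, namely part $2)$ of \cref{prop:gdigroup_properties} (cf.\ \cref{rem:inverses}), which yields $a^{I_\xi}\vd a = a^{J_\xi}\vd a$ and $a \dv a^{I_\xi} = a\dv a^{J_\xi}$ and closes both identities.
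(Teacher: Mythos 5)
Your proof is correct, and in fact the paper states this proposition without any proof at all (it is left as a routine consequence of the decomposition), so your argument supplies the missing verification rather than diverging from one. Every step checks out against the paper's toolkit: the projection formulas $g_a = a \vd \xi$, $h_a = \xi \dv a$, $e_a = g_a^{-1}\vd a$, $f_a = a \dv h_a^{-1}$ follow from associativity and \eqref{bar-unit}; the crossed identities follow from \eqref{lbarside} and \eqref{rbarside} exactly as you say; the identifications $g_a^{-1} = a^{J_\xi}$ and $h_a^{-1} = a^{I_\xi}$ are justified by the uniqueness clause in \eqref{def:inverse}; and the final matching via part $2)$ of \cref{prop:gdigroup_properties} (with $b=a$, $e=\xi$), together with \eqref{inn_assoc} to make the three-fold expressions unambiguous, closes both remaining identities. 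You also correctly handle the one genuinely delicate point, namely that the symbols $h_a^{-1}$ and $g_a^{-1}$ in the statement (unilateral inverses in the sense of \cref{rem:inverses}) coincide with the group inverses inside $H$ and $G$, again by uniqueness in \eqref{def:inverse}.
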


\smallskip 

\begin{rem}
    As a useful final complement let us note that if $(D,\vd,\dv)$ is a $g$-digroup then $(D,\vd)$ and $(D,\dv)$ are anti-isomorphic semigroups.
    Indeed, the map $\psi:D \to D$ defined by setting $\psi(a) = g_a^{-1} \vd f_a$, for all $a \in D$, is a semigroup anti-isomorphism. 
\end{rem}

\smallskip

\subsection{Basics on set-theoretic solutions}

Among the various algebraic structures that originated from or were employed in the study of set-theoretical solutions, the class of \emph{shelves} has a key role in their investigation and are useful to encode different properties.

\begin{defin}
    A (left) shelf $(D, \triangleright)$ is a set $D$ equipped with a binary operation $\triangleright$ such that
    \begin{equation}\label{self_distri}
        \forall\, a,b,c\in D\quad a \triangleright (b \triangleright c)= (a \triangleright b) \triangleright (a \triangleright c)\,. 
    \end{equation} 
    A shelf whose left multiplication operators ${L_a:D \to D\,, b \mapsto a \tr b}$ are all bijective is called a \emph{rack}. If in addition $a\triangleright a = a$ holds, for all $a\in D$, then $D$ is called a \emph{quandle}.
\end{defin}

{Hereinafter, when considering a shelf $(D,\tr)$ we will always denote with $L_a:D \to D$ the operator of left multiplication by $a \in D$.
With this notation, it is important to note that the identity \eqref{self_distri} is equivalent to the following
\begin{align}\label{self_distri1}
    \forall\, a,b \in D \quad L_aL_b = L_{L_a(b)}L_a \,,
\end{align}
which on its own amounts to the fact that $L_a \in \End(D,\tr)$, for all $a \in D$.}

\begin{exs}\,
    \begin{enumerate}
        \item[1)] If $D$ is a set and $f:D\to D$ a map, the binary operations on $D$ defined by setting $a \tr b = f(b)$, for all $a,b\in D$, endows $D$ of a shelf structure that is a rack if and only if $f \in \Sym(D)$. In particular, if $f=\id_D$, then we call $\left(D,\triangleright\right)$ the \emph{trivial quandle}.
        \item[2)] If $D$ is a group, then the structure $\left(D,\triangleright\right)$ with $a\triangleright b:= a^{-1}ba$, for all $a,b\in D$, is a quandle called the \emph{conjugation quandle on $D$} and denoted by $\Conj(D)$.
    \end{enumerate}
\end{exs}

Let $\left(D,\,\tr\right)$ be a shelf and let us consider the map $r_{\tr}:D \times D \to D \times D$ defined by setting $r_{\tr}(a,b) = \left(b,\, b\tr a\right)$, for all $a,b \in D$.
Then, $r_\tr$ is a left non-degenerate solution that is said to be of \emph{(left) derived type}.
It is clear that $r_{\tr}$ is bijective and non-degenerate if and only if $(D,\tr)$ is a rack.
Conversely, as shown for example in \cite{Le18}, to a left non-degenerate solution $\left(D, r\right)$ one can always associate a shelf $\left(D,\triangleright_r\right)$ defined by setting
\begin{align*}
    \forall\, a,b\in D\qquad a \tr_r b = \lambda_a\rho_{\lambda_b^{-1}(a)}(b) \,.
\end{align*}
{An important property that relates the latter structure with the original solution is the fact that each map $\lambda_a:D \to D$ is an automorphism of $(D,\tr_r)$, i.e. such that
\begin{align}\label{lambda_auto}
    \forall\, a,b,c\in D\qquad \lambda_a(b \tr_r c) = \lambda_a(b) \tr_r \lambda_a(c)
\end{align}
(see also \cref{thm:twist_solution}). Moreover}, the solution further determined by $\tr_r$ takes the name of \emph{(left) derived solution} of $r$ and is usually denoted as $r'$.
It is worth to note that $(r')' = r$.

\smallskip

Let $(D,r)$ and $(E,s)$ be solutions.
A \emph{Drinfel'd homomorphism} ($D$-homomorphism for short) $\varphi:(D,r) \to (E,s)$ is a map $\varphi:D \to E$ that satisfies $s \varphi = \varphi r$.
If $\varphi$ occurs to be bijective then it is said to be a $D$-isomorphism, while the solutions $(D,r)$ and $(E,s)$ are called $D$-isomorphic.
For more details, we refer the reader for instance to \cite{Do21}, \cite{KuMu00}, and \cite{So00}.
Particular examples of such maps $\varphi:(D,r) \to (E,s)$ are given by those that can be decomposed as $\varphi = f \times f$, for some  map $f:D \to E$ which bears the name of \emph{homomorphism} of solutions.
When $f$ happens to be bijective, we say that $f$ is an \emph{equivalence} and call $(D,r)$ and $(E,s)$ \emph{equivalent} (or isomorphic) solutions, cf. \cite{ESS99}.

\begin{rem}\label{rem:shelf_hom}
    It is worth noting here that if $(D,r)$ and $(E,s)$ are left non-degenerate solutions, then an equivalence $f:(D,r) \to (E,s)$ corresponds to an isomorphism between their respective left derived shelves $f:(D,\tr_r) \to (E,\tr_s)$.
\end{rem}

\begin{prop}\label{prop:derived_sol}
Let $\left(D, r\right)$ be a left non-degenerate solution and let $\left(D, r'\right)$ be its left derived solution.
Then, $\left(D, r\right)$ and $\left(D, r'\right)$ are $D$-isomorphic.
\begin{proof}
    The assertion is the content of \cite[Lemma~2.12]{DoRySte24}.
\end{proof}
\end{prop}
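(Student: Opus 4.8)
The plan is to bypass the cited lemma and exhibit an explicit Drinfel'd isomorphism by hand. Since $(D,r)$ is left non-degenerate, each $\lambda_a \in \Sym_D$, so I would introduce the map
\[
    \Lambda : D\times D \to D\times D, \qquad \Lambda(a,b) = (a,\lambda_a(b)),
\]
which is a bijection with inverse $(a,b)\mapsto(a,\lambda_a^{-1}(b))$, as $\Lambda(a,\lambda_a^{-1}(b)) = (a,\lambda_a\lambda_a^{-1}(b)) = (a,b)$. It then remains only to check that $\Lambda$ intertwines the two braidings, i.e.\ that $r'\Lambda = \Lambda r$, which is precisely the defining condition for $\Lambda$ to be a Drinfel'd homomorphism $(D,r)\to(D,r')$. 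Being moreover bijective, such a $\Lambda$ is a $D$-isomorphism, whence the claim.

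First I would recall the two braidings explicitly. Writing $r(a,b) = (\lambda_a(b),\rho_b(a))$, the left derived shelf is $a\tr_r b = \lambda_a\rho_{\lambda_b^{-1}(a)}(b)$, and the derived solution, being of derived type for $\tr_r$, reads $r'(a,b) = (b,\, b\tr_r a)$. Composing $\Lambda$ after $r$ gives $\Lambda r(a,b) = \Lambda(\lambda_a(b),\rho_b(a)) = (\lambda_a(b),\, \lambda_{\lambda_a(b)}(\rho_b(a)))$, while composing $r'$ after $\Lambda$ gives $r'\Lambda(a,b) = r'(a,\lambda_a(b)) = (\lambda_a(b),\, \lambda_a(b)\tr_r a)$. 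The first components already coincide, so the whole matter reduces to the second components. Here I would unwind the definition of $\tr_r$ with first argument $\lambda_a(b)$ and second argument $a$, obtaining $\lambda_a(b)\tr_r a = \lambda_{\lambda_a(b)}\rho_{\lambda_a^{-1}(\lambda_a(b))}(a)$; the cancellation $\lambda_a^{-1}\lambda_a = \id_D$ collapses the inner subscript to $b$, giving $\lambda_a(b)\tr_r a = \lambda_{\lambda_a(b)}(\rho_b(a))$. This is exactly the second component of $\Lambda r(a,b)$, so $r'\Lambda = \Lambda r$ as required.

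I do not expect a genuine obstacle: the proposition reduces to a one-line verification once the correct twisting map is selected. The only real care needed lies in choosing the right convention for $\Lambda$ — that it should act as $(a,b)\mapsto(a,\lambda_a(b))$, with $\lambda_a$ rather than $\lambda_a^{-1}$, and in the stated direction — since it is precisely this choice that makes the $\lambda_a^{-1}\lambda_a$ cancellation inside $\tr_r$ close the computation and force the second components to agree. Finally, I would remark that $\Lambda$ is not in general of product form $f\times f$, so the resulting $D$-isomorphism is a genuine Drinfel'd isomorphism and need not be an equivalence of solutions in the sense recalled before \cref{rem:shelf_hom}.
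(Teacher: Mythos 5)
Your proof is correct: the map $\Lambda(a,b)=(a,\lambda_a(b))$ is a bijection by left non-degeneracy, and your computation $r'\Lambda(a,b)=(\lambda_a(b),\lambda_{\lambda_a(b)}\rho_{\lambda_a^{-1}\lambda_a(b)}(a))=(\lambda_a(b),\lambda_{\lambda_a(b)}\rho_b(a))=\Lambda r(a,b)$ is exactly the intertwining identity needed for a Drinfel'd isomorphism $(D,r)\to(D,r')$. The paper's own proof merely cites \cite[Lemma~2.12]{DoRySte24}, whose content is precisely this twist map and verification, so your argument is a correct, self-contained rendering of the same approach rather than a different route.
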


The above relationship can actually be lifted to a fundamental description of all left non-degenerate solutions through the so-called notion of \emph{twist}. 
In detail, following \cite[Definition 2.14]{DoRySte24}, if $(D,\tr)$ is a shelf then a map $\varphi:D \to \Aut(D,\tr)$ is called a {twist} of $(D,\tr)$ if the following identity holds:
\begin{align}\label{eq:twist}
    \forall\, a,b\in D\qquad
    \varphi_a\varphi_b = 
    \varphi_{\varphi_a(b)}\varphi_{\varphi^{-1}_{\varphi_a(b)}\left(\varphi_a(b)\, \tr\,a\right)} \,
\end{align}

\begin{theor}[cf. \cite{DoRySte24}]\label{thm:twist_solution}
    Let $(D,\tr)$ be a shelf and consider a map $\varphi:D \to {\Sym(D)}$.
    If we define $r:D \times D \to D \times D$ by setting
    \begin{align*}
        r(a,b) =
        \left(\varphi_a(b), \varphi^{-1}_{\varphi_a(b)}\left(\varphi_a(b) \tr a\right)\right) \,,
    \end{align*}
    for all $a,b \in D$, then $(D,r)$ is a solution if and only if $\varphi$ is a twist.
    In this case, $(D,r)$ is left non-degenerate and $(D,r_\tr)$ is its left derived solution.
    {Moreover, any left non-degenerate solution can be obtained that way.}
\end{theor}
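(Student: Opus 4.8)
The plan is to read off the two components of $r$ and then match the defining identities \eqref{first}, \eqref{second}, \eqref{third} of a solution against the twist axiom \eqref{eq:twist}. Writing $r(a,b) = (\lambda_a(b),\rho_b(a))$, the construction gives $\lambda_a = \varphi_a$ and $\rho_b(a) = \varphi^{-1}_{\varphi_a(b)}(\varphi_a(b) \tr a)$. Since every $\varphi_a$ is a bijection by hypothesis, $\lambda_a \in \Sym(D)$ for all $a$, so left non-degeneracy is automatic as soon as $r$ is a solution; thus only the equivalence itself and the identification of the derived solution require work. First I would rewrite \eqref{first} in operator form as $\lambda_a\lambda_b = \lambda_{\lambda_a(b)}\lambda_{\rho_b(a)}$ and substitute the explicit $\lambda$ and $\rho$: the subscript $\rho_b(a)$ becomes exactly $\varphi^{-1}_{\varphi_a(b)}(\varphi_a(b) \tr a)$, so \eqref{first} reads verbatim as the twist identity \eqref{eq:twist}. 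Hence \eqref{first} holds if and only if \eqref{eq:twist} does.

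Next I would compute the left derived shelf of $r$ directly from its definition $a \tr_r b = \lambda_a\rho_{\lambda_b^{-1}(a)}(b)$. Substituting the explicit $\rho$ with subscript $\varphi_b^{-1}(a)$ and argument $b$, the inner pair $\varphi_b\varphi_b^{-1}$ and then the outer pair $\varphi_a\varphi_a^{-1}$ cancel, collapsing the expression to $a \tr_r b = a \tr b$. This is a purely formal identity that does not use the solution axioms, so whenever $r$ is a solution its derived shelf is precisely $(D,\tr)$, and the left derived solution $r' = r_{\tr_r}$ coincides with $r_\tr$, which settles that claim. With this in hand the two implications of the main equivalence become short. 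If $r$ is a solution, then \eqref{first} yields \eqref{eq:twist}, while property \eqref{lambda_auto} forces each $\lambda_a = \varphi_a$ into $\Aut(D,\tr_r) = \Aut(D,\tr)$; together these are exactly the two requirements for $\varphi$ to be a twist. For the converse, \eqref{eq:twist} already secures \eqref{first}, and it remains only to verify \eqref{second} and \eqref{third}.

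The main obstacle is this last verification. I would substitute the explicit forms of $\lambda$ and $\rho$ into \eqref{second} and \eqref{third} and reduce each to an identity using three ingredients: the automorphism property $\varphi_a(x \tr y) = \varphi_a(x) \tr \varphi_a(y)$ granted by $\varphi$ being a twist, the self-distributivity \eqref{self_distri} of $\tr$, and the twist identity \eqref{eq:twist} itself to recombine composite $\varphi$-subscripts. The delicate part is the bookkeeping: because $\rho$ nests $\varphi^{-1}$, the operation $\tr$, and $\varphi$, one must repeatedly push the $\varphi$'s through $\tr$ via the automorphism property before \eqref{eq:twist} can be applied to collapse nested subscripts. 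I expect both \eqref{second} and \eqref{third} to reduce to identities after this massaging, and this is the technical heart of the argument.

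Finally, for the closing assertion that every left non-degenerate solution arises this way, I would start from an arbitrary left non-degenerate $(D,s)$, set $\tr := \tr_s$ and $\varphi_a := \lambda_a$, invoke \eqref{lambda_auto} to see that $\varphi$ lands in $\Aut(D,\tr)$, and use \eqref{first} for $s$ to obtain \eqref{eq:twist}; hence $\varphi$ is a twist. Inverting the definition of $\tr_s$ exactly as in the derived-shelf computation gives $\rho_b(a) = \lambda^{-1}_{\lambda_a(b)}(\lambda_a(b) \tr_s a)$, which shows that $s$ is the solution $r$ built from the pair $(\tr,\varphi)$. This is consistent with \cref{prop:derived_sol}, which identifies $s$ with its derived type up to a Drinfel'd isomorphism.
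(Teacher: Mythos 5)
Your plan is structurally sound, and several pieces of it are fully correct: the identification of \eqref{first} with the twist identity \eqref{eq:twist} is indeed verbatim once $\rho_b(a) = \varphi^{-1}_{\varphi_a(b)}\left(\varphi_a(b) \tr a\right)$ is substituted; the formal cancellation $a \tr_r b = \varphi_a\varphi_a^{-1}(a \tr b) = a \tr b$ correctly identifies the left derived shelf; and the closing surjectivity argument (given a left non-degenerate $s$, take $\tr = \tr_s$, $\varphi = \lambda$, and invert the definition of $\tr_s$ to recover $\rho$) is correct. Note, for the record, that this paper contains no proof of the theorem at all --- it is imported from \cite{DoRySte24} --- so your attempt has to stand entirely on its own.

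The genuine gap is the one you yourself flag: for the implication ``twist $\Rightarrow$ solution'' you must verify \eqref{second} and \eqref{third}, and the proposal replaces this with ``I expect both to reduce to identities after this massaging.'' Since \eqref{first} is a tautological restatement of \eqref{eq:twist}, those two verifications are the entire mathematical content of that implication; as written you have a plan, not a proof. The computations do close, with exactly your three ingredients, and here is the concrete route you should carry out. Write $A = \varphi_a(b)$ and $B = \varphi_a\varphi_b(c)$. The twist identity in operator form gives $\varphi_A\varphi_{\rho_b(a)} = \varphi_a\varphi_b$ and $\varphi_B\varphi_{\rho_{\varphi_b(c)}(a)} = \varphi_a\varphi_{\varphi_b(c)}$; using these together with one application of the automorphism property, both sides of \eqref{second} collapse to $\varphi_B^{-1}(B \tr A)$. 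For \eqref{third}, the same substitutions plus pushing $\varphi_A^{-1}$ and $\varphi_B^{-1}$ through $\tr$ reduce the left-hand side to $\varphi^{-1}_{\varphi_A^{-1}(B)}\varphi_A^{-1}\bigl(B \tr (A \tr a)\bigr)$ and, after one use of \eqref{self_distri}, the right-hand side to $\varphi^{-1}_{\varphi_B^{-1}(B \tr A)}\varphi_B^{-1}\bigl(B \tr (A \tr a)\bigr)$; these agree precisely because of the equivalent form \eqref{eq:equi_first} of the twist condition (the content of \cref{lemma:YB1_shelf}, whose proof is independent of this theorem). A smaller point: in the forward direction you appeal to \eqref{lambda_auto} to place $\varphi_a$ in $\Aut(D,\tr)$, but inside this paper that identity is quoted from the literature with a pointer back to this very theorem; a self-contained argument should either derive it from \eqref{first}--\eqref{third} (a short but nonvacuous computation) or cite \cite{Le18} directly.
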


In the setting of \cref{thm:twist_solution}, if $(A,r)$ and $(B,s)$ are left non-degenerate solutions, then an equivalence $f:A\to B$ corresponds to an automorphism $f:(A,\tr_r) \to (B,\tr_s)$ of shelves such that $f\lambda_a(b) = \lambda_{f(a)}f(b)$ holds, for all $a,b\in A$.

\medskip

We conclude this section by showing how the description given so far can be used to obtain two important general results that will be specifically useful in \cref{SubSec:Sub-sol-di-skew_brace}.
\smallskip

The first result is related to a characterization of right non-degeneracy provided by Rump in \cite[Corollary~2, p.~21]{Ru19}  using the language of \emph{$q$-cycle sets}. For completeness, we propose to report only the part of Rump's result that we use together with the related proof, but in a form that better suits our notation.
With this aim, let us first recall that if $(D,r)$ is a left non-degenerate solution then its \emph{square map} (or \emph{diagonal map}) $\mathfrak{q}:D \to D$ is defined by $\mathfrak{q}(a) = \lambda^{-1}_a(a)$, for all $a \in D$.

\begin{prop}\label{lem:rho_nondegenerate_sol}\label{prop:bijleft-right}
    Let $(D,r)$ be a bijective left non-degenerate solution.
    Then $r$ is right non-degenerate if and only if its {square map} is bijective.
    \begin{proof}
        {Let $(D,\tr_r)$ be the left derived rack of $(D,r)$.}
        Fix $b \in D$ and define a map $\theta_b:D\to D$ by setting $\theta_b(a):= \lambda_b^{-1}L_b\mathfrak{q}\left(a\right)$, for all $a\in D$. Then, for all $a \in D$, the following holds: 
        \begin{align*}
            \theta_b\left(a\right)
            =  \lambda^{-1}_b\lambda^{-1}_a L_{\lambda_a\left(b\right)}\left(a\right)
            = \lambda^{-1}_{\rho_b\left(a\right)}\lambda^{-1}_{\lambda_a\left(b\right)}L_{\lambda_a\left(b\right)}\left(a\right) 
            = \lambda^{-1}_{\rho_b\left(a\right)}\rho_b\left(a\right) 
            =  \mathfrak{q}\rho_b\left(a\right),
        \end{align*}
        where we have used {\eqref{lambda_auto} in the first equality and} \eqref{first} in the second equality.
        We deduce that $\rho_b$ is bijective, for all $b\in D$, if $\mathfrak{q}$ is bijective.
        Conversely, the necessity of this last assumption is proven in \cite[Lemma 3.1]{LeVe19} (non-necessarily for finite solutions, see \cite[Remark 3.10]{LeVe19}).
    \end{proof}
\end{prop}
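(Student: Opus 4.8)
The plan is to reduce right non-degeneracy of $r$, that is the bijectivity of all the maps $\rho_b$, to the bijectivity of the single square map $\mathfrak{q}$. The idea is to produce, for each fixed $b\in D$, an explicit identity expressing $\mathfrak{q}\rho_b$ as a composite of bijections twisted by $\mathfrak{q}$ itself. The natural setting for this is the left derived rack $(D,\tr_r)$: since $(D,r)$ is bijective and left non-degenerate, its left derived shelf is a rack (as recalled in the preceding discussion, together with \cref{prop:derived_sol}), so every left multiplication $L_a$ is a bijection, and by \eqref{lambda_auto} each $\lambda_a$ is an automorphism of $(D,\tr_r)$. I would also keep at hand the two relations that drive the computation: the defining equality $\lambda_a\mathfrak{q}(a)=a$ and the identity \eqref{first}.

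For a fixed $b\in D$ I would introduce the composite $\theta_b:=\lambda_b^{-1}L_b\,\mathfrak{q}$. Since $\lambda_b^{-1}L_b$ is a bijection, $\theta_b$ is bijective exactly when $\mathfrak{q}$ is, which is the mechanism converting a statement about the single map $\mathfrak{q}$ into one about all the $\rho_b$. The heart of the argument is then to establish the identity $\theta_b=\mathfrak{q}\rho_b$. I would start from $L_b\mathfrak{q}(a)=b\tr_r\mathfrak{q}(a)$ and apply $\lambda_a$: by \eqref{lambda_auto} and $\lambda_a\mathfrak{q}(a)=a$ this turns $\lambda_a\bigl(b\tr_r\mathfrak{q}(a)\bigr)$ into $\lambda_a(b)\tr_r a=L_{\lambda_a(b)}(a)$, so that $\theta_b(a)=\lambda_b^{-1}\lambda_a^{-1}L_{\lambda_a(b)}(a)$. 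Expanding $L_{\lambda_a(b)}(a)$ through the definition of $\tr_r$ yields $\lambda_{\lambda_a(b)}\rho_b(a)$, and a single application of \eqref{first} in the form $\lambda_{\lambda_a(b)}\lambda_{\rho_b(a)}=\lambda_a\lambda_b$ collapses the string of $\lambda$-operators down to $\lambda_{\rho_b(a)}^{-1}\rho_b(a)=\mathfrak{q}\rho_b(a)$. I expect this substitution chain to be the delicate part, since it requires invoking \eqref{lambda_auto} and \eqref{first} in the right order while tracking the index carried by each $\lambda$.

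Once $\theta_b=\mathfrak{q}\rho_b$ is secured, the sufficiency direction is immediate: if $\mathfrak{q}$ is bijective then so is each $\theta_b$, whence $\rho_b=\mathfrak{q}^{-1}\theta_b$ is bijective for every $b$ and $r$ is right non-degenerate. For the converse I do not expect this identity to deliver the bijectivity of $\mathfrak{q}$ by itself, since solving $\theta_b=\mathfrak{q}\rho_b$ for $\mathfrak{q}$ merely reproduces $\mathfrak{q}$ on the right-hand side; instead I would invoke the known fact that right non-degeneracy of a bijective solution forces its diagonal map to be bijective, namely \cite[Lemma~3.1]{LeVe19}, which holds without any finiteness assumption by \cite[Remark~3.10]{LeVe19}. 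Putting the two implications together yields the claimed equivalence.
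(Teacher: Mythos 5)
Your proposal is correct and follows essentially the same route as the paper's own proof: the same auxiliary map $\theta_b=\lambda_b^{-1}L_b\,\mathfrak{q}$, the same identity $\theta_b=\mathfrak{q}\rho_b$ established via \eqref{lambda_auto} and \eqref{first}, and the same appeal to \cite[Lemma~3.1]{LeVe19} (with \cite[Remark~3.10]{LeVe19}) for the converse direction. The only difference is cosmetic ordering of the substitutions, and your explicit remark that $\lambda_b^{-1}L_b$ is bijective because the left derived shelf is a rack makes the sufficiency step slightly more self-contained than the paper's.
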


\begin{rem}
    Let us observe that \cref{lem:rho_nondegenerate_sol} admits a straightforward dual statement which can be proven in analogous manner, ensuring that a bijective right non-degenerate solution $(D,r)$ is left non-degenerate if and only if the map  %D \ni a \mapsto \rho^{-1}_a(a) \in D$ is bijective. 
    $\bar{\mathfrak{q}}:D\to D$ defined by setting $\bar{\mathfrak{q}}(a) = \rho^{-1}_a(a)$, for every $a\in D$, is bijective.
\end{rem}

\smallskip

The last part of this section is based on the elementary fact that if $(D,r)$ is the solution associated to a twist $\lambda:D \to \Aut(D,\tr)$, then $r$ and $r'$ share the same order as elements of $\Sym(D \times D)$, thanks to \cref{prop:derived_sol}.
This observation opens up the possibility to relate the above quantity to some properties of the associated shelf $(D,\tr)$ and compute it when possible.
\medskip

\begin{lemma}\label{powers_prop}
    Let $(D,\triangleright)$ be a rack. Then, the following identities 
    \begin{align}
        r_\tr^{2n}(a,b) &= \left((L_bL_a)^nL_a^{-n}(a),(L_bL_a)^nL_b^{-n}(b)\right) \label{even_powers2}\,, \\[0.2cm]
        r_\tr^{2n+1}(a,b) &= \left((L_bL_a)^nL_b^{-n}(b),(L_bL_a)^{n+1}L_a^{-(n+1)}(a)\right) \label{odd_powers2}\,,
    \end{align}
    hold, for all $a,b \in D$ and $n\in\mathbb{N}$.
\end{lemma}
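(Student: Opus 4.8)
The plan is to prove the two identities \eqref{even_powers2} and \eqref{odd_powers2} simultaneously by induction on $n \in \mathbb{N}$, treating them as a single alternating induction in which each step amounts to one further application of $r_\tr$. The essential tool is the conjugation form of self-distributivity available in a rack: since $L_a \in \End(D,\tr)$ by \eqref{self_distri1} and $L_a$ is bijective, every $L_a$ lies in $\Aut(D,\tr)$, and consequently so does any product of the operators $L_a^{\pm 1}$, $L_b^{\pm 1}$. For an arbitrary $\phi \in \Aut(D,\tr)$ and any $x \in D$ the automorphism property $\phi(x \tr y) = \phi(x) \tr \phi(y)$ rewrites as $\phi L_x = L_{\phi(x)}\phi$, whence $L_{\phi(x)} = \phi\, L_x\, \phi^{-1}$. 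This is precisely what is needed to slide a left-multiplication operator past the prefactors $(L_bL_a)^n L_a^{-n}$ and $(L_bL_a)^n L_b^{-n}$ occurring in the statement.

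For the base case $n=0$ the right-hand sides of \eqref{even_powers2} and \eqref{odd_powers2} reduce to $(a,b)$ and $(b, L_b(a))$, which agree with $r_\tr^0(a,b) = (a,b)$ and $r_\tr^1(a,b) = (b,\, b \tr a) = (b, L_b(a))$. The inductive step then proceeds in two halves. First I pass from the even formula to the odd one: writing $r_\tr^{2n}(a,b) = (u,v)$ with $u = (L_bL_a)^n L_a^{-n}(a)$ and $v = (L_bL_a)^n L_b^{-n}(b)$, a single application gives $r_\tr^{2n+1}(a,b) = (v, L_v(u))$, so the first coordinate is already correct. Setting $\phi = (L_bL_a)^n L_b^{-n}$ so that $v = \phi(b)$, the conjugation identity yields $L_v = \phi L_b \phi^{-1} = (L_bL_a)^n L_b (L_bL_a)^{-n}$; applying this to $u$ collapses to $(L_bL_a)^n L_b L_a^{-n}(a)$, which equals $(L_bL_a)^{n+1} L_a^{-(n+1)}(a)$ after absorbing one factor $L_bL_a$, as required.

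The passage from the odd formula to the even one at $n+1$ is entirely analogous. Writing $r_\tr^{2n+1}(a,b) = (p,q)$ with $q = (L_bL_a)^{n+1} L_a^{-(n+1)}(a) = \psi(a)$ for $\psi = (L_bL_a)^{n+1} L_a^{-(n+1)}$, one obtains $L_q = \psi L_a \psi^{-1} = (L_bL_a)^{n+1} L_a (L_bL_a)^{-(n+1)}$, and the key simplification $L_a (L_bL_a)^{-1} = L_a L_a^{-1} L_b^{-1} = L_b^{-1}$ turns $L_q(p)$ into $(L_bL_a)^{n+1} L_b^{-(n+1)}(b)$, completing the step. This closes the induction, since the chain of single applications runs $\eqref{even_powers2}_0 \to \eqref{odd_powers2}_0 \to \eqref{even_powers2}_1 \to \cdots$ through all exponents.

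The computations are routine telescoping once the conjugation identity is in place, so I do not expect a genuine conceptual obstacle. The only point demanding care—and what I regard as the main (if modest) difficulty—is the bookkeeping of the inverse operators so that the middle factors cancel cleanly: one must use $(L_bL_a)^{-1} = L_a^{-1} L_b^{-1}$ in the correct order during the final cancellations, and keep track of the fact that $\phi$ conjugates $L_b$ while $\psi$ conjugates $L_a$, rather than the reverse.
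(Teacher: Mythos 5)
Your proof is correct and takes essentially the same route as the paper's: both rest on the conjugation identity $L_{\phi(x)}=\phi L_x\phi^{-1}$ (the paper's technical identity \eqref{eq:tech_id}, stated there for $\phi=(L_xL_y)^kL_u^{-k}$) and proceed by induction on $n$. The only difference is organizational: you interleave the even and odd formulas in a single alternating induction stepping by $r_\tr$, whereas the paper inducts on the even formula with steps of $r_\tr^2$ and then derives the odd formula by one final application of $r_\tr$.
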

\begin{proof}
    As a technical identity, let us first note that, {by \eqref{self_distri1}},
    \begin{align}\label{eq:tech_id}
        L_{\left(L_xL_y\right)^kL_u^{-k}(v)} =
        \left(L_xL_y\right)^kL_u^{-k}L_vL_u^k\left(L_xL_y\right)^{-k}
    \end{align}
    holds for all $x,y,u,v \in D$ and $k \in \mathbb{N}$.
    To prove \eqref{even_powers2} we proceed by induction on $n \in \mathbb{N}$.
    In the base case $n = 1$ we have that 
    $r^2_\tr(a,b) = \left(L_b(a),L_{L_b(a)}(b)\right) = \left(L_b(a),L_bL_aL_b^{-1}(b)\right)$, for all $a,b \in D$.
    Now, let $n \in \mathbb{N}$ with $n > 1$ and assume the assertion true for $n > 1$.
    Fix $a,b \in D$ and note that
    \begin{align*}
        r_\tr^{2(n+1)}(a,b) 
            = 
            r_\tr^2 r_\tr^{2n}(a,b)
            =
            (L_B(A),L_{L_B(A)}(B)) \,,
    \end{align*}
    where we have set $A = (L_bL_a)^{n}L_a^{-n}(a)$ and $B = (L_bL_a)^{n}L_b^{-n}(b)$.
    Applying \eqref{eq:tech_id}, we compute $L_B(A) = \left(L_bL_a\right)^{n+1}L_a^{-n-1}(a)$ and again by \eqref{eq:tech_id} deduce that
    \begin{align*}
        L_{L_B(A)}(B) 
        = \left(L_bL_a\right)^{n+1}L_a\left(L_bL_a\right)^{-n-1}(B) 
        % &= \left(L_bL_a\right)^{n+1}L_a\left(L_bL_a\right)^{-1}L_b^{-n}(b) \\
        = \left(L_bL_a\right)^{n+1}L_b^{-n-1}(b) \,,
    \end{align*}
    thus proving the assertion.
    To verify that \eqref{odd_powers2} holds, for all $n \in \mathbb{N}$, it is sufficient to compute $r_\tr^{2n+1} = r^{}_\tr r_\tr^{2n}$ using \eqref{even_powers2} and \eqref{eq:tech_id} accordingly.
\end{proof}

\smallskip

\begin{prop}\label{prop-even-odd}
    Let $(D,\tr)$ be a rack and let $n \in \mathbb{N}$.
    Then, the following hold:
    \begin{enumerate}
        \item[\textnormal{1)}] $r_\tr^{2n} = \id_D$ if and only if 
        \begin{align}\label{eq:1} 
        \forall\ a,b\in D\qquad (L_bL_a)^nL_b^{-n}(b) = b \,.
         \end{align}
         \item[\textnormal{2)}] $r_\tr^{2n+1} = \id_D$ if and only if $(D,\tr)$ is a quandle  such that
    \begin{align}\label{eq:2}        
    \forall\ a,b\in D\qquad (L_aL_b)^n(a) = b \,.
\end{align}
    \end{enumerate}
\end{prop}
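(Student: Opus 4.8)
The plan is to read off both components of $r_\tr^{2n}$ and $r_\tr^{2n+1}$ from the explicit formulas \eqref{even_powers2} and \eqref{odd_powers2} in \cref{powers_prop}, and then to show in each case that requiring the power of $r_\tr$ to be $\id_D$ on all of $D \times D$ collapses the two resulting component conditions into the single stated identity. Throughout I will use that $(D,\tr)$ is a rack, so every $L_a$ is invertible, together with the two structural identities that follow from \eqref{self_distri1}: the conjugation relation $(L_bL_a)^n = L_a^{-1}(L_aL_b)^nL_a$ and the rack relation $L_a^{-1}L_bL_a = L_{L_a^{-1}(b)}$.

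For part 1), $r_\tr^{2n}=\id_D$ is equivalent, via \eqref{even_powers2}, to the simultaneous validity for all $a,b\in D$ of $(L_bL_a)^nL_a^{-n}(a)=a$ (first component) and of \eqref{eq:1} (second component). Since the latter already appears as the stated condition, the whole content of the claim is that the first-component identity is a consequence of \eqref{eq:1}. The key step is to rewrite $L_bL_a=L_aL_{b'}$ with $b'=L_a^{-1}(b)$, which is exactly the rack relation; the first-component identity then reads $(L_aL_{b'})^nL_a^{-n}(a)=a$, that is, \eqref{eq:1} read with the roles of the two variables interchanged. As $b$ runs over $D$ so does $b'=L_a^{-1}(b)$, so this interchanged form holds for all pairs precisely when \eqref{eq:1} does, and the two component conditions coincide.

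For part 2), \eqref{odd_powers2} shows that $r_\tr^{2n+1}=\id_D$ is equivalent to $(L_bL_a)^nL_b^{-n}(b)=a$ and $(L_bL_a)^{n+1}L_a^{-(n+1)}(a)=b$ holding for all $a,b$. I would first force the quandle property out of these: setting $a=b$ in the two identities gives $L_b^{\,n}(b)=b$ and $L_b^{\,n+1}(b)=b$ respectively, and comparing them yields $L_b(b)=b$, i.e. $b\tr b=b$. Once $(D,\tr)$ is a quandle we have $L_b^{-k}(b)=b$, so the two identities simplify to $(L_bL_a)^n(b)=a$ and $(L_bL_a)^{n+1}(a)=b$. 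The first of these is \eqref{eq:2} with the variables swapped, hence equivalent to \eqref{eq:2}. It remains to see that, in a quandle, \eqref{eq:2} already forces the second: using the conjugation relation and $L_a(a)=a$ one computes $(L_bL_a)^n(a)=L_a^{-1}(L_aL_b)^n(a)=L_a^{-1}(b)$ by \eqref{eq:2}, and then $(L_bL_a)^{n+1}(a)=L_bL_a\,L_a^{-1}(b)=L_b(b)=b$, which closes the equivalence.

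The main obstacle in both parts is the same: the two component identities produced by \cref{powers_prop} look independent, and the real work is to show that each reduces to the single symmetric-looking condition stated in the proposition. In part 1) this is the substitution $b'=L_a^{-1}(b)$ turning $L_bL_a$ into $L_aL_{b'}$, and in part 2) it is both the $a=b$ specialization that produces the quandle axiom and the conjugation computation that deduces the second component identity from \eqref{eq:2}.
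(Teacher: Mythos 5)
Your proof is correct and follows essentially the same route as the paper's: both read the components of $r_\tr^{2n}$ and $r_\tr^{2n+1}$ off \cref{powers_prop} and then reconcile the resulting component identities with the stated conditions. The only differences are in the micro-steps: in part 1) the paper deduces the first-component identity from \eqref{eq:1} by a direct computation using the conjugation relation and \eqref{eq:tech_id} instead of your reindexing $b' = L_a^{-1}(b)$, and in part 2) it extracts the quandle condition by showing that, under the first-component identity, the second component equals $L_b(b)$, rather than by your specialization $a=b$.
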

\begin{proof}
    Notice first that if $r_\tr^{2n} = \id_D$ then $\left(L_bL_a\right)^nL_b^{-n}(b) = b$ trivially holds for all $a,b \in D$, thanks to \eqref{even_powers2}.
    Conversely, suppose that \eqref{eq:1} identically holds and let us show that $(L_bL_a)^nL_a^{-n}(a) = a$, for all $a,b \in D$, thus proving the first point again thanks to \eqref{even_powers2}.
    Indeed, if $a,b \in D$ then
    \begin{align*}
        \left(L_bL_a\right)^nL_a^{-n}(a) 
        &\overeq{\eqref{eq:1}}
        \left(L_bL_a\right)^n\left(L_aL_b\right)^{-n}(a)
        =
        L_a^{-1}\left(L_aL_b\right)^nL_a\left(L_aL_b\right)^{-n}(a)\\
        &\overeq{\eqref{self_distri1}}
        L_a^{-1}L_{\left(L_aL_b\right)^nL_a^{-n}(a)}(a) = L_a^{-1}L_a(a) = a \,.
    \end{align*}
    To obtain the second point, it is sufficient to show that $r_\tr^{2n+1}(a,b) = (a,\,L_b(b))$, for all $a,b\in D$, under the assumption that $(L_bL_a)^nL_b^{-n}(b) = a$ holds, for all $a,b\in D$. 
    Indeed, if we observe that {the latter identity yields}
        \begin{align*}
            (L_bL_a)^{n+1}L_a^{-(n+1)}(a) 
            &= (L_bL_a)^n L_bL_a^{-n}(a) \\
            &= {(L_bL_a)^n L_b (L_aL_b)^{-n}(b)} \\
            &= L_b (L_aL_b)^n(L_aL_b)^{-n}(b) \\
            &= L_b(b) \,,
        \end{align*}
    for all $a,b \in D$, then we deduce that the above assertion holds thanks to \eqref{odd_powers2}, thus terminating the proof. 
\end{proof}
\smallskip

In the following result, we will implicitly assume that $\textnormal{inf}(\emptyset) = \infty$.

\begin{theor}\label{theor:derived_sol_order}
    Let $(D,\tr)$ be a rack with $|D| > 1$. 
    If we set
    \begin{align*}
        M_D &:= \textnormal{inf}\{n \in \mathbb{N} \mid \forall\, a,b\in D \quad (L_bL_a)^nL_b^{-n}(b) = a\} \\[0.2cm]
        N_D &:= \textnormal{inf}\{n \in \mathbb{N} \mid \forall\, a,b\in D \quad (L_bL_a)^nL_b^{-n}(b) = b\}
    \end{align*}
    and denote by $o(r_\tr)$ the order of $r_\tr$ as an element of $\Sym(D \times D)$, then the following assertions hold:
    \begin{enumerate}
        \item[\textnormal{1)}] If $D$ is not a quandle, then $r_\tr$ has finite order if and only if $N_D$ is finite.
        In particular, we have that $o(r_\tr) = 2N_D$.
        \item[\textnormal{2)}] If $D$ is a quandle, then $r_\tr$ has finite order if and only if\, $\textnormal{min}\{M_D,\,N_D\}$ is finite.
        In particular, the following cases occur:
        \begin{itemize}
            \item[\textnormal{(a)}] If $M_D < N_D$ then $o(r_\tr) = 2M_D+1$;
            \item[\textnormal{(b)}] %else 
            If $N_D < M_D$ then $o(r_\tr) = 2N_D$.
        \end{itemize}
    \end{enumerate}
    \begin{proof}
        The proof is a straightforward consequence of \cref{prop-even-odd}.
    \end{proof}
\end{theor}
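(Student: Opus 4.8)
The plan is to determine $o(r_\tr)$ by characterizing, for each exponent $m$, exactly when $r_\tr^m = \id_D$, splitting the analysis according to the parity of $m$ and feeding each case into the corresponding point of \cref{prop-even-odd}. Since $o(r_\tr)$ is by definition the least positive exponent $m$ with $r_\tr^m = \id_D$ (and $\infty$ if none exists), it then suffices to locate the smallest even and the smallest odd such exponent and take their minimum.

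For even exponents, \cref{prop-even-odd} gives at once that $r_\tr^{2n} = \id_D$ if and only if \eqref{eq:1} holds, that is, if and only if $n$ lies in the set defining $N_D$; hence the least even exponent realizing the identity is $2N_D$, and such an exponent exists precisely when $N_D < \infty$. For odd exponents, the same proposition shows that $r_\tr^{2n+1} = \id_D$ forces $(D,\tr)$ to be a quandle satisfying \eqref{eq:2}. The key observation I would record here is that, on a quandle, the condition defining $M_D$ coincides with \eqref{eq:2}: indeed $a \tr a = a$ reads $L_a(a) = a$, whence $L_b^{-n}(b) = b$, so $(L_bL_a)^nL_b^{-n}(b) = (L_bL_a)^n(b)$, and relabelling $a \leftrightarrow b$ turns $(L_bL_a)^n(b) = a$ into exactly \eqref{eq:2}. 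Consequently the least odd exponent realizing the identity is $2M_D+1$, it can occur only when $D$ is a quandle, and it exists precisely when $M_D < \infty$.

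With these two facts the assertions follow by assembly. If $D$ is not a quandle, then no odd power equals $\id_D$, so $o(r_\tr)$ is finite if and only if $N_D < \infty$, and then $o(r_\tr) = 2N_D$, which is part~1. If $D$ is a quandle, then $o(r_\tr) = \min\{2N_D,\,2M_D+1\}$, finite exactly when $\min\{M_D,N_D\} < \infty$. To pass from this minimum to the stated closed form I would use that the sets defining $M_D$ and $N_D$ are disjoint: a single $n$ satisfying both conditions would force $a = b$ for all $a,b \in D$, contradicting $|D| > 1$. In particular $M_D \neq N_D$ whenever either is finite, so the alternatives $M_D < N_D$ and $N_D < M_D$ are exhaustive; comparing $2M_D+1$ with $2N_D$ then yields $o(r_\tr) = 2M_D+1$ in case~(a) and $o(r_\tr) = 2N_D$ in case~(b).

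The only step beyond bookkeeping is the identification on a quandle of the $M_D$-condition with \eqref{eq:2} (via $L_b^{-n}(b) = b$ and relabelling), together with the disjointness observation that rules out $M_D = N_D$ and makes the two cases well posed; once these are in place, the computation of the order is just the minimum of two explicit exponents, and I expect no genuine obstacle beyond this careful translation.
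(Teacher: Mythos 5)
Your proof is correct and takes essentially the same approach as the paper: the paper's entire proof is the remark that the theorem is a straightforward consequence of \cref{prop-even-odd}, and your argument supplies exactly the details left implicit there (the identification, on a quandle, of the $M_D$-condition with \eqref{eq:2} via $L_b^{-n}(b)=b$ and relabelling, and the disjointness observation using $|D|>1$ that rules out $M_D = N_D$ and makes cases (a) and (b) exhaustive). Nothing further is needed.
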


\smallskip

As an application, we provide an example extending \cite[Theorem~4.13]{SmVe18} in the sense that the involved twists are not necessarily obtained through skew braces.

\begin{cor}\label{corol:conj_sol_order}
    Let $G$ be a group with centre $Z(G)$ and let $(G,\tr) = \Conj(G)$ be its conjugation quandle.
    If $r_\tr$ is its associated left derived solution, then $r_\tr$ has finite order if and only if $G/Z(G)$ is periodic.
    In particular, when this happens we have that
    \begin{align*}
        o(r_\tr) = 2\exp(G/Z(G)) \,.
    \end{align*}
    \begin{proof}
        Let us first note that we can suppose $G$ non-trivial without loss of generality.
        In the notation of \cref{theor:derived_sol_order}, if we assume that there exists $n \in \mathbb{N}$ such that $(L_bL_a)^nL_b^{-n}(b) = a$, for all $a,b \in G$, then we can deduce that $(ab)^{-n} b (ab)^n = a$, for all $a,b \in G$.
        Setting $b = 1$, it follows that $a = 1$, for all $a \in G$, a contradiction.
        Therefore, thanks to $\textnormal{2})$ in \cref{theor:derived_sol_order} we deduce that $r_\tr$ has finite order if and only if there exists $n \in \mathbb{N}$ such that $(ab)^{-n} b (ab)^n = b$, for all $a,b \in G$.
        Since the latter condition is equivalent to requiring that there exists $n \in \mathbb{N}$ such that $x^n y = y x^n$, for all $x,y \in G$, we conclude that $o(r_\tr)$ is finite if and only if $G/Z(G)$ is periodic and in this case $o(r_\tr) = 2\exp(G/Z(G))$.
    \end{proof}
\end{cor}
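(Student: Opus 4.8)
The plan is to apply \cref{theor:derived_sol_order} to the quandle $D = \Conj(G)$, reducing the whole statement to an identification of the quantities $M_D$ and $N_D$ in purely group-theoretic terms. First I would record that, since $a \tr b = a^{-1}ba$, each operator $L_a$ is conjugation by $a$; hence $L_bL_a$ is conjugation by $ab$ and $(L_bL_a)^n(x) = (ab)^{-n}x(ab)^n$ for all $x \in G$. As $L_b^{-1}$ is conjugation by $b^{-1}$, we get $L_b^{-n}(b) = b$, so the common expression $(L_bL_a)^nL_b^{-n}(b)$ appearing in both $M_D$ and $N_D$ collapses to $(ab)^{-n}b(ab)^n$. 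One may assume $G$ nontrivial, so that $|D| > 1$ and the theorem applies.

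Next I would dispatch $M_D$. Its defining condition reads $(ab)^{-n}b(ab)^n = a$ for all $a,b \in G$; specializing $b = 1$ forces $a = 1$ for every $a$, which is impossible. Thus $M_D = \infty$, and since $\Conj(G)$ is a quandle we land in case 2) of \cref{theor:derived_sol_order} with $M_D = \infty$. Consequently $r_\tr$ has finite order exactly when $N_D$ is finite, and in that regime the inequality $N_D < M_D$ puts us in sub-case (b), yielding $o(r_\tr) = 2N_D$.

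It then remains to compute $N_D$, whose defining condition is $(ab)^{-n}b(ab)^n = b$ for all $a,b \in G$. The decisive move is that, for fixed $b$, the assignment $a \mapsto ab$ is a bijection of $G$; substituting $x = ab$ rewrites the condition as $x^{-n}bx^n = b$ for all $x,b \in G$, i.e. $x^n \in Z(G)$ for every $x \in G$. Hence $N_D$ is the least $n$ with $x^nZ(G) = Z(G)$ for all $x$, so $N_D = \exp(G/Z(G))$ when that exponent is finite and $N_D = \infty$ otherwise. Combining with the previous step, $r_\tr$ has finite order if and only if $G/Z(G)$ has finite exponent --- the periodicity condition of the statement --- and then $o(r_\tr) = 2N_D = 2\exp(G/Z(G))$.

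The computations here are light, and the only genuine decisions are structural: recognizing that the quandle branch of \cref{theor:derived_sol_order} separates an odd-order alternative (controlled by $M_D$) from an even-order one (controlled by $N_D$), and that the specialization $b = 1$ cleanly eliminates the odd alternative by forcing $M_D = \infty$. After that, the substitution $x = ab$ is essentially the entire content of the $N_D$ computation, and the passage to $\exp(G/Z(G))$ is immediate; I would expect no real obstacle beyond setting up these two reductions correctly.
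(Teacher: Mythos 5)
Your proposal is correct and follows essentially the same route as the paper's proof: both apply \cref{theor:derived_sol_order}, kill the odd-order alternative by specializing $b=1$ to force $M_D = \infty$, and compute $N_D = \exp(G/Z(G))$ via the substitution $x = ab$. Your version merely makes explicit the preliminary rack computations (that $L_bL_a$ is conjugation by $ab$ and $L_b^{-n}(b) = b$) that the paper leaves implicit.
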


The following result can be proven through a similar reasoning.

\begin{cor}
    Let $G$ be a group and let $\text{Core}(G) = (G,\tr)$ be the core quandle on $G$, whose operation is defined by $a \tr b = a b^{-1} a$, for all $a,b \in D$.
    Then $r_\tr$ has finite order if and only if $G$ is periodic.
    In particular, when this happens we have that $o(r_\tr) = \exp(G)$.
\end{cor}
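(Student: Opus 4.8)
The plan is to apply \cref{theor:derived_sol_order} after reducing the two quantities $M_D$ and $N_D$ to concrete conditions on the exponent of $G$, exactly in the spirit of \cref{corol:conj_sol_order}. First I would record that for the core quandle the left multiplication operator is $L_a(b) = ab^{-1}a$ and that it is an involution, since $L_a^2(b) = a(ab^{-1}a)^{-1}a = a(a^{-1}ba^{-1})a = b$; hence $L_a^{-1} = L_a$, and in particular $L_b^{-n}(b) = b$ for every $n$, because $L_b(b) = bb^{-1}b = b$. Since $a \tr a = aa^{-1}a = a$, the structure $\Core(G)$ is a quandle, so part~2) of \cref{theor:derived_sol_order} is the relevant one; note also that we may assume $G$ nontrivial, so that $|D| > 1$.

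The key computational step is to evaluate the iterated operator. A straightforward induction gives
\[
    (L_bL_a)^n(x) = (ba^{-1})^n\, x\, (a^{-1}b)^n \,,
\]
for all $a,b,x \in G$ and $n \in \mathbb{N}$. Evaluating at $x = b$ and using the conjugacy relation $a^{-1}b = a^{-1}(ba^{-1})a$, which yields $(a^{-1}b)^n = a^{-1}(ba^{-1})^n a$, the expression collapses telescopically to
\[
    (L_bL_a)^n(b) = (ba^{-1})^{2n+1}a \,.
\]
This is the identity that makes everything explicit. Since $L_b^{-n}(b) = b$, the defining condition of $M_D$ becomes $(ba^{-1})^{2n+1} = 1$ and that of $N_D$ becomes $(ba^{-1})^{2n} = 1$, for all $a,b \in G$. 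As $ba^{-1}$ runs over the whole of $G$ (take $a = 1$), these read respectively as $x^{2n+1} = 1$ and $x^{2n} = 1$ for all $x \in G$; equivalently, $M_D = \textnormal{inf}\{n \mid \exp(G) \text{ divides } 2n+1\}$ and $N_D = \textnormal{inf}\{n \mid \exp(G) \text{ divides } 2n\}$.

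Finally I would conclude by a parity analysis. The quantity $N_D$ is finite precisely when $G$ has finite exponent, which is the periodicity hypothesis; hence, by part~2) of \cref{theor:derived_sol_order}, $r_\tr$ has finite order if and only if $G$ is periodic. Writing $m = \exp(G)$: if $m$ is odd, the smallest odd multiple of $m$ is $m$ itself, giving $M_D = (m-1)/2$, while $m \mid 2n \Leftrightarrow m \mid n$ gives $N_D = m$; thus $M_D < N_D$ and case~(a) yields $o(r_\tr) = 2M_D + 1 = m$. If $m$ is even, then $m \mid 2n+1$ is impossible, so $M_D = \infty$, whereas $N_D = m/2$; thus $N_D < M_D$ and case~(b) yields $o(r_\tr) = 2N_D = m$. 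In either case $o(r_\tr) = \exp(G)$, as claimed. The only genuinely new point compared with \cref{corol:conj_sol_order} — and the step I expect to demand the most care — is that here $M_D$ need not be infinite: unlike the conjugation quandle, where setting $b = 1$ forces a contradiction in the $M_D$-condition, for the core quandle the condition $x^{2n+1} = 1$ is satisfiable whenever $\exp(G)$ is odd, so one must carry out the parity split and verify that both branches of \cref{theor:derived_sol_order} collapse to the single value $\exp(G)$.
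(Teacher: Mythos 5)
Your proof is correct and takes exactly the route the paper intends: the paper's own ``proof'' is merely the remark that the statement follows ``through a similar reasoning'' to \cref{corol:conj_sol_order}, i.e., by applying \cref{theor:derived_sol_order} after identifying the conditions defining $M_D$ and $N_D$ for the core quandle. Your key identity $(L_bL_a)^nL_b^{-n}(b) = (ba^{-1})^{2n+1}a$ and the resulting parity split (with $M_D$ finite precisely when $\exp(G)$ is odd, unlike the conjugation case where $M_D$ is always infinite) correctly supply the details the paper leaves implicit, and both branches of \cref{theor:derived_sol_order} indeed collapse to $o(r_\tr) = \exp(G)$.
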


\smallskip

%%%%%%%%%%%%%%%%%%%%%%%%%%%%%%%%%%%%%%%%%%%%%%%%%%%%
\section{Di-skew braces}\label{Sec2}         
%%%%%%%%%%%%%%%%%%%%%%%%%%%%%%%%%%%%%%%%%%%%%%%%%%%%

\begin{defin}\label{def:di_skewbrace}
    A (left) \emph{di-skew brace} is a quadruple $(D,\vd,\dv,\circ)$ where $(D,\vd,\dv)$ is a $g$-digroup, $(D,\circ)$ is a right group such that the following identities hold for all $a,b,c \in D$:
    \begin{align}
        a \circ (b \vd c) &=
        a \circ b \vd a^{-1} \vd a \circ c \,, \tag{D1}\label{diskew1} \\
        a \circ (b \dv c) &=
        a \circ b \dv a^{-1} \dv a \circ c \,, \tag{D2}\label{diskew2} \\
        (a \vd b) \circ c &=
        (a \dv b) \circ c \,. \tag{D3}\label{diskew3}
    \end{align} 
    We call $(D,\vd,\dv)$ and $(D,\circ)$ the \emph{additive structure} and the \emph{multiplicative structure} of $(D,\vd,\dv,\circ)$, respectively. Moreover, we say that the structure $(D,\vd,\dv,\circ)$ is a \emph{di-brace} if $(D,\vd,\dv)$ is \emph{abelian}, i.e., if $a \vd b = b \dv a$ holds for all $a,b \in D$.
\end{defin}

Note that the class of di-skew braces encompasses that of skew braces $(D,+,\circ)$ \cite{GuVe17}, where we interpret their additive structure as an iterated group $(D,+,+)$.

\begin{ex}
    Let $(D,\vd,\dv)$ be a $g$-digroup.
    Then $(D,\vd,\dv,\vd)$ is a di-skew brace that will be called the \emph{trivial di-skew brace}.
    Analogously, if we denote with $\dv^{\text{op}}$ the opposite operation of $\dv$, then the quadruple $(D,\vd,\dv,\dv^{\text{op}})$ is again a di-skew brace which bears the name of \emph{almost-trivial di-skew brace}.
\end{ex}

We postpone further examples of di-skew braces in Section 3, where we employ an extended notion of averaging operator on a group. 

\smallskip

In the following, we will denote with $E(D,\vd,\dv)$ the set of bar-units of $(D,\vd,\dv)$, while $E(D,\circ)$ will stand for the set of idempotents in $(D,\circ)$.

\begin{lemma}\label{common_idemp}
    Let $(D,\vd,\dv,\circ)$ be a di-skew brace.
    Then $E(D,\vd,\dv) = E(D,\circ)$.
    \begin{proof}
        Let us first prove that $E(D,\circ) \subseteq E(D,\vd,\dv)$.
        If $j \in E(D,\circ)$ then from the fact that $j$ is a left unit in $(D,\circ)$, it follows that
        \begin{align*}
            j \vd j = j \circ (j \vd j) 
            = j \circ j \vd j^{-1} \vd j \circ j 
            = j \vd j^{-1} \vd j 
            = j \,,
        \end{align*}
        where in the last equality we have used the fact that $j \vd j^{-1} \in E(D,\,\vd,\dv)$.
        Hence, ${j \in E(D,\vd,\dv)}$ thanks to \cref{g-digroup_decomp}.
        To prove the converse inclusion, consider a bar-unit $e \in E(D,\vd,\dv)$, fix $j \in E(D,\circ)$ and notice that 
        \begin{align*}
            e \circ a = (j \vd e) \circ a \overeq{\eqref{diskew3}} (j \dv e) \circ a = j \circ a = a \,,
        \end{align*}
        holds for all $a \in D$.
        Therefore $e \in E(D,\circ)$, concluding the proof.
    \end{proof}
\end{lemma}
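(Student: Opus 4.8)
The plan is to prove the two inclusions separately, relying throughout on a standard structural fact about right groups (cf.\ \cite{ClPr61}): every idempotent of a right group is a left unit. Thus any $j \in E(D,\circ)$ satisfies $j \circ a = a$ for all $a \in D$, and this is the lever that lets the multiplicative axioms \eqref{diskew1}--\eqref{diskew3} interact with the additive structure. I would also keep at hand the fact from \cref{g-digroup_decomp} that the bar-units of $(D,\vd,\dv)$ are exactly the idempotents of $(D,\vd)$, so that proving idempotency for $\vd$ is the same as landing in $E(D,\vd,\dv)$.

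For the inclusion $E(D,\circ) \subseteq E(D,\vd,\dv)$, I would fix $j \in E(D,\circ)$ and aim to show that $j$ is idempotent for $\vd$. The natural move is to expand $j \circ (j \vd j)$ in two ways. Since $j$ is a left unit for $\circ$, the expression equals $j \vd j$; on the other hand, \eqref{diskew1} rewrites it as $j \circ j \vd j^{-1} \vd j \circ j$, which collapses to $j \vd j^{-1} \vd j$ after using $j \circ j = j$. Equating the two and observing that $j \vd j^{-1}$ is a bar-unit, hence a left identity for the trailing $j$, yields $j \vd j = j$. By \cref{g-digroup_decomp} this forces $j \in E(D,\vd,\dv)$.

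For the reverse inclusion I would fix a bar-unit $e \in E(D,\vd,\dv)$ together with an arbitrary $j \in E(D,\circ)$, now knowing from the inclusion just proved that $j$ is itself a bar-unit. The goal is to show that $e$ is a left unit for $\circ$, whence idempotent. Because $j$ is a bar-unit we have $j \vd e = e$, so $e \circ a = (j \vd e) \circ a$ for all $a$; then \eqref{diskew3} converts the $\vd$ into a $\dv$, and since $e$ is a bar-unit we also have $j \dv e = j$, giving $(j \dv e) \circ a = j \circ a = a$ by left-unitality of $j$. Hence $e \circ a = a$ for every $a$, so $e \in E(D,\circ)$.

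The step I expect to need the most care is the bookkeeping around the unilateral inverse $j^{-1}$ appearing in the first inclusion. By \cref{rem:inverses} this symbol is only specified up to a choice of bar-unit and of side, so I must be sure that $j \vd j^{-1}$ is genuinely a bar-unit irrespective of that choice before using it as a left identity. This is precisely what the defining identity \eqref{def:inverse} guarantees, and the bar-side irrelevance conditions ensure the conclusion does not depend on the particular bar-unit or side selected; the remaining manipulations are routine applications of the distributivity laws and of left-unitality.
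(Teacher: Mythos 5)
Your proof is correct and follows essentially the same route as the paper: expanding $j \circ (j \vd j)$ via \eqref{diskew1} and left-unitality to get $j \vd j = j$, then using \eqref{diskew3} together with $j \vd e = e$ and $j \dv e = j$ for the reverse inclusion. You even make explicit two points the paper leaves implicit, namely that $j \vd e = e$ relies on the first inclusion and that $j \vd j^{-1}$ is a bar-unit independently of the choices involved.
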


\begin{convention}\label{convenzione}
    \textnormal{In light of the previous results, in what follows, if $(D,\vd,\dv,\circ)$ is a di-skew brace then we will denote by $E(D)$ its set of common idempotents which we call the set \emph{idempotents} of $(D,\vd,\dv,\circ)$.
    Most importantly, throughout our work and if not otherwise stated, we will implicitly consider a fixed idempotent $0 \in E(D)$ which will be employed to decompose both the additive $(D,\vd,\dv)$ and multiplicative $(D,\circ)$ structures.
    In particular, since $(D,\circ)$ is a right group, then $M = D \circ \xi$ is a subgroup of $(D,\circ)$ and all $a \in D$ admit a unique decomposition as 
    $$
    a = m_a \circ u_a \,,
    $$
    where $m_a\in M$ and $u_a\in E(D)$ are its groupal and idempotent component, respectively.}
\end{convention}

\begin{rem}
    It is worth remarking that if $(D,\vd,\dv,\circ)$ is a di-skew brace then the groupal parts $G$ and $M$ of the additive and multiplicative structures, respectively, do not necessarily coincide.
    Indeed, first note that if $a\in D$ satisfies $a\in G \cap M$, then $a \circ 0 = a \vd 0$.
    As a consequence, if in particular $(D,\vd,\dv)$ is a non-abelian $g$-digroup then the almost-trivial di-skew brace $(D,\vd,\dv,\dv^{\text{op}})$ provides an instance in which $G \neq M$.
    Analogously, with a dual reasoning, it is straightforward to verify that if $(D,\vd,\dv)$ is non-abelian, then the trivial di-skew brace $(D,\vd,\dv,\vd)$ satisfies $H \neq M$, where $H$ is the right groupal part of $(D,\vd,\dv)$.
\end{rem}

In what follows, if $(D,\vd,\dv,\circ)$ is a di-skew brace then, for all $a,b \in D$, we will denote with $a^- \circ b$ the unique $x \in D$ such that $a \circ x = b$.
{The slight abuse of notation is justified by the fact that in this way we can denote left division by $a$ with respect to $\circ$ - which is possible since $(D,\circ)$ is in particular a left quasigroup - without having to introduce a left division symbol in the signature of a di-skew brace.}
In particular, in light of \cref{convenzione}, if $a = m_a \circ u_a$ is the multiplicative decomposition of $a$ then $a^- \circ b = {m_a^{-}} \circ b$, for all $a,b \in D$, {thus avoiding any risk of ambiguity.}

\smallskip

\begin{prop}\label{digroup_lambda}
    Let $D$ be a di-skew brace and for all $a \in A$ define a map $\lambda_a:D \to D$ by setting $\lambda_a(b) = a^{-1} \vd a \circ b$, for all $b \in D$.
    Then, the following hold:
    \begin{enumerate}
        \item[1)] $\lambda_a \in \Aut(D,\vd,\dv)$ with inverse $\lambda_a^{-1}(b) = a^- \circ (a \vd b)$, for all $a,b \in D$;
        \item[2)] The map $\lambda:(D,\circ) \to \Aut(D,\vd,\dv)\,, a \mapsto \lambda_a$ is a semigroup homomorphism.
    \end{enumerate}
    \begin{proof}
        Fix $a \in D$ and define a map $\sigma_a:D \to D$ by setting $\sigma_a(b) = a^- \circ (a \vd b)$, for all $b \in D$.
        Then, a straightforward computation shows that $\sigma_a = \lambda_a^{-1}$.
        Now, let $b,c \in D$ and notice that
        \begin{align*}
            \lambda_a(b \vd c) =
            a^{-1} \vd a \circ (b \vd c) 
            = a^{-1} \vd (a \circ b \vd a^{-1} \vd a \circ c) 
            = \lambda_a(b) \vd \lambda_a(c) \,,
        \end{align*}
        while we also have that
        \begin{align*}
            \lambda_a(b \dv c) 
            = a^{-1} \vd (a \circ b \dv a^{-1} \dv a \circ c) 
            \overeq{\eqref{inn_assoc}} (a^{-1} \vd a \circ b) \dv (a^{-1} \dv a \circ c) 
            \overeq{\eqref{rbarside}} \lambda_a(b) \dv \lambda_a(c)\,,
        \end{align*}
        thus showing that $\lambda_a \in \Aut(D,\vd,\dv)$.
        To prove the second point, fix a bar-unit $e\in E(D)$ and notice that 
        \begin{align*}
            \lambda_{a \circ b}(c) &=
            (a \circ b)^{-1} \vd a \circ b \circ c \\
            &= (a \vd \lambda_a(b))^{-1} \vd a \circ b \circ c \\
            &= (\lambda_a(b))^{-1} \vd a^{-1} \vd a \circ b \circ c\,, \quad
            \text{since $\lambda_a$ is a homomorphism,} \\
            &= \lambda_a(b^{I_e}) \vd a^{-1} \vd a \circ b \circ c \\
            &= \lambda_a(b^{I_e}) \vd \lambda_a(b \circ c) \\
            &= \lambda_a(b^{-1} \vd b \circ c) \\
            &= \lambda_a\lambda_b(c) \,,
        \end{align*}
        for all $c \in D$, where the choice of the bar-unit is only necessary in the scope of the computation and does not affect the general result.
    \end{proof}
\end{prop}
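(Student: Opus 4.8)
The plan is to establish part 1) in two stages---first that $\lambda_a$ respects each of the two additive operations, then that it is bijective by exhibiting an explicit inverse---and afterwards to deduce part 2) by rewriting the product $a\circ b$ additively. For the homomorphism property with respect to $\vd$, I would expand $\lambda_a(b\vd c) = a^{-1}\vd a\circ(b\vd c)$ and apply \eqref{diskew1}; since all symbols are now joined by $\vd$, associativity of $\vd$ lets me regroup the result as $(a^{-1}\vd a\circ b)\vd(a^{-1}\vd a\circ c) = \lambda_a(b)\vd\lambda_a(c)$, with no cancellation needed. The case of $\dv$ is the only genuinely axiom-sensitive computation: expanding $\lambda_a(b\dv c)$ via \eqref{diskew2} produces the mixed expression $a^{-1}\vd(a\circ b\dv a^{-1}\dv a\circ c)$, which I would first reorganize using inner associativity \eqref{inn_assoc} to move the leading $a^{-1}$ inside the $\vd$-factor, and then correct the bar side of the second factor from $\dv$ to $\vd$ by right bar-side irrelevance \eqref{rbarside}, landing on $\lambda_a(b)\dv\lambda_a(c)$. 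For bijectivity I would propose $\sigma_a(b) = a^-\circ(a\vd b)$ as the inverse and verify both composites: $\lambda_a\sigma_a(b)$ collapses because $a\circ(a^-\circ x)=x$ in the right group $(D,\circ)$, while $\sigma_a\lambda_a(b)$ collapses because $a\vd a^{-1}$ is a bar-unit, so each composition reduces to the identity. Together these give $\lambda_a\in\Aut(D,\vd,\dv)$ with the stated inverse.

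For part 2) the key observation I would exploit is the bridge identity $a\circ b = a\vd\lambda_a(b)$, immediate from the definition of $\lambda_a$ and the fact that $a\vd a^{-1}$ is a bar-unit. Using this I rewrite $\lambda_{a\circ b}(c) = (a\circ b)^{-1}\vd a\circ b\circ c = (a\vd\lambda_a(b))^{-1}\vd a\circ b\circ c$, invert the product via part 3) of \cref{prop:gdigroup_properties}, and flatten the resulting $\dv$ into a $\vd$ through left bar-side irrelevance \eqref{lbarside} to obtain $\lambda_a(b)^{-1}\vd a^{-1}\vd(a\circ b\circ c)$. Recognizing $a^{-1}\vd(a\circ(b\circ c)) = \lambda_a(b\circ c)$ and $\lambda_a(b)^{-1} = \lambda_a(b^{-1})$ (because $\lambda_a$ is now known to be an automorphism carrying inverses to inverses), I would fold the two terms back through the $\vd$-homomorphism property already proved to reach $\lambda_a(b^{-1}\vd b\circ c) = \lambda_a\lambda_b(c)$.

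The main obstacle I anticipate is the bookkeeping of one-sided inverses and bar-units: since $a^{-1}$ is only unambiguous on the bar side (\cref{rem:inverses}) and $\lambda_a$ is a priori merely a set map, I must ensure that every passage of an inverse through $\lambda_a$ and every conversion between $\vd$ and $\dv$ is licensed by the correct axiom among \eqref{inn_assoc}, \eqref{rbarside}, \eqref{lbarside}, or by \cref{prop:gdigroup_properties}. In particular, the step $\lambda_a(b)^{-1} = \lambda_a(b^{-1})$ relies on $\lambda_a$ sending bar-units to bar-units, which is only available once part 1) has been secured; hence the order of the argument---establishing the automorphism property before the multiplicativity of $\lambda$---is essential.
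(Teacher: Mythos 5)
Your proposal is correct and follows essentially the same route as the paper's own proof: the same expansions via \eqref{diskew1}, \eqref{diskew2}, \eqref{inn_assoc} and \eqref{rbarside} for the automorphism property, the same candidate inverse $a^-\circ(a\vd b)$, and the same bridge identity $a\circ b = a\vd\lambda_a(b)$ together with inversion of the product and the $\vd$-homomorphism property to obtain $\lambda_{a\circ b}=\lambda_a\lambda_b$. The extra bookkeeping you flag for unilateral inverses (via \cref{prop:gdigroup_properties} and \cref{rem:inverses}) only makes explicit what the paper leaves implicit.
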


\begin{rem}\label{rem:lambda_index}
    Let $(D,\vd,\dv,\circ)$ be a di-skew brace and consider the map ${\lambda:D \to \Aut(D,\vd,\dv)}$ from \cref{digroup_lambda}.
    Note that \eqref{diskew3} is equivalent to the identity $\lambda_{a \vd b} = \lambda_{a \dv b}$.
    Indeed, if $a,b,c, \in D$ then the following hold:
    \begin{align*}
        \lambda_{a \vd b}(c) = \lambda_{a \dv b}(c) 
        &\iff
        (a \vd b)^{-1} \vd (a \vd b) \circ c = (a \dv b)^{-1} \vd (a \dv b) \circ c \\
        &\iff
        b^{-1} \vd a^{-1} \vd (a \vd b) \circ c = b^{-1} \vd a^{-1} \vd (a \dv b) \circ c \\
        &\iff (a \vd b) \circ c = (a \dv b) \circ c \,,
    \end{align*}
    by left cancellativity of $(D,\vd)$.
\end{rem}

\begin{rem}\label{rem:diskew_decomp}
    Let $(D, \vd, \dv, \circ)$ be a di-skew brace and let $G$ be the groupal component in the decomposition of $(D,\vd,\dv)$.
    If $a,b \in D$, then the following holds:
    \begin{align*}
        \lambda_a(b) &\overeq{\eqref{rem:lambda_index}}
        \lambda_{g_a \dv e_a}(g_b\vd e_b) \overeq{\eqref{digroup_lambda}}
        \lambda_{g_a}(g_b) \vd \lambda_{g_a}(e_b) \,,
    \end{align*}
    where $\lambda_{g_a}(e_b) \in E(D)$ is a bar-unit, thanks to $1)$ in \cref{digroup_lambda}.
    Since $\lambda_{g_a}(g_b)$ is not necessarily in $G$, in general, for all $x,y \in G$ it makes sense to define $\sigma_x(y) := \lambda_x(y) \vd 0$ as the groupal component of $\lambda_x(y)$.
    As a consequence, the following identity
    \begin{align*}
        \lambda_a(b) = \sigma_{g_a}(g_b) \vd \lambda_{g_a}(e_b) \,,
    \end{align*}
    provides the additive decomposition of the twist $\lambda$, for all $a,b \in D$.
    In particular, as a useful reminder for \cref{theor:diskew_twist_decomp}, we note that if $a,b \in G$ then $a \circ b = a \vd \sigma_a(b) \vd \lambda_a(0)$.
\end{rem}

\smallskip

%%%%%%%%%%%%%%%%%%%%%%%%%%%%%%%%%%%%%%%%%%%%%%%%%%%%
\subsection{Set-theoretic solutions associated to di-skew braces}\label{SubSec:Sub-sol-di-skew_brace}
%%%%%%%%%%%%%%%%%%%%%%%%%%%%%%%%%%%%%%%%%%%%%%%%%%%%

In this section we recall the notion of conjugation rack associated to a $g$-digroup and show that di-skew braces systematically yield bijective and non-degenerate solutions having such a structure as left derived rack.
The conjugation rack was first introduced in \cite{Kin07} as an extension to digroups of the usual conjugation quandle of a group and was later carried over to $g$-digroups, see \cite{Res24}.

\begin{prop}
    Let $(D,\vd,\dv)$ be a $g$-digroup and define a binary operation $\triangleright$ on $D$ by
    \begin{align*}
        a \triangleright b = a^{-1} \vd b \dv a \,,
    \end{align*}
    for all $a,b \in D$.
    Then $(D,\triangleright)$ is a \emph{left rack} called the \emph{conjugation rack} of $D$ and denoted as $\Conj(D,\vd,\dv)$.      
\end{prop}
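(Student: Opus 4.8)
The plan is to reduce both rack axioms to a single statement: for each fixed $a \in D$ the left multiplication operator $L_a \colon b \mapsto a^{-1} \vd b \dv a$ is an automorphism of the underlying $g$-digroup $(D,\vd,\dv)$. This is the $g$-digroup analogue of ``conjugation is an automorphism'' and yields both axioms at once. Indeed, if $L_a \in \Aut(D,\vd,\dv)$ then $L_a$ is bijective, so all left multiplications are bijective; and since $\tr$ is written purely in terms of $\vd$, $\dv$ and unilateral inverses, which $g$-digroup homomorphisms preserve by the remark following the definition of a $g$-digroup, one obtains $L_a(b \tr c) = L_a(b) \tr L_a(c)$ for all $b,c$. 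By \eqref{self_distri1} this identity, holding for every $a$, is precisely self-distributivity. Note first that $\tr$ is well defined: the inverse occurs only in the position $a^{-1} \vd (\cdots)$, so by $2)$ of \cref{prop:gdigroup_properties} (see \cref{rem:inverses}) the value $a^{-1}\vd b \dv a$ is independent of the choice between $a^{I_e}$ and $a^{J_e}$, and I am free to pick whichever is convenient in each computation.

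First I would prove that $L_a$ preserves $\vd$, i.e. $a^{-1}\vd(b\vd c)\dv a = (a^{-1}\vd b\dv a)\vd(a^{-1}\vd c\dv a)$. Expanding the right-hand side and shuffling $\dv$ past $\vd$ via \eqref{inn_assoc} and \eqref{lbarside}, one isolates a central factor $a \vd a^{-1}$; choosing there $a^{-1} = a^{J_e}$ collapses it to the bar-unit $e$ by \eqref{def:inverse}, after which the bar-unit law \eqref{bar-unit} reassembles the left-hand side. The proof that $L_a$ preserves $\dv$ is dual but needs one extra ingredient: after using \eqref{rbarside} to turn the relevant $\vd$ into a $\dv$, the central factor becomes $a \dv a^{-1}$, which is not killed by \eqref{def:inverse} directly. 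The key subsidiary observation is that $a \dv a^{I_e}$ is idempotent in $(D,\dv)$ — a one-line check using $a^{I_e}\dv a = e$ and $a\dv e = a$ — so that it is a bar-unit by $2)$ of \cref{g-digroup_decomp} and hence absorbable. Symmetrically, $a^{I_e}\vd a$ is idempotent in $(D,\vd)$, and this is what I would use for bijectivity.

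For bijectivity I would exhibit the explicit two-sided inverse $\mu_a \colon b \mapsto a \vd b \dv a^{-1}$ and verify $L_a\mu_a = \mu_a L_a = \id_D$, where the cancellations $a^{I_e}\vd a$ (a bar-unit) and $a^{I_e}\dv a = e$ dispatch the two composites. Since $L_a$ is then a bijective endomorphism of $(D,\vd,\dv)$, it lies in $\Aut(D,\vd,\dv)$, which closes the reduction above and shows that $(D,\tr)$ is a rack.

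I expect the main obstacle to be purely the bookkeeping of unilateral inverses: because inverses in a $g$-digroup are one-sided, the products $a \vd a^{-1}$, $a^{-1}\vd a$ and their $\dv$-counterparts behave asymmetrically, and each cancellation forces a specific choice of $a^{I_e}$ versus $a^{J_e}$ together with the correct bar-side-irrelevance law among \eqref{inn_assoc}--\eqref{lbarside}. The conceptual content — that three of these four products are bar-units, via idempotency and \cref{g-digroup_decomp} — is short; keeping the conventions consistent is the delicate part. As an alternative route one could instead invoke the structural isomorphism $(D,\vd,\dv)\cong G\times E$ recalled after \cref{ex:digroup}, under which a direct computation gives $(g,e)\tr(h,f) = (g^{-1}hg,\psi_g(f))$; self-distributivity and bijectivity of the $L_{(g,e)}$ are then transparent, as they reduce to conjugation in $G$ composed with the permutation $\psi_g$ of $E$.
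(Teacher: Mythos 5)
Your argument is correct and complete in its essential points; note, however, that the paper itself offers no proof of this proposition, which is recalled from \cite{Kin07} and \cite{Res24}, so the comparison here is with the paper's surrounding machinery rather than with an in-text argument. Your key reduction --- that each $L_a$ is an automorphism of the $g$-digroup $(D,\vd,\dv)$, whence bijectivity is immediate and self-distributivity in the form \eqref{self_distri1} follows because $\tr$ is built from $\vd$, $\dv$ and unilateral inverses, all of which $g$-digroup automorphisms preserve --- is exactly the mechanism the paper itself invokes later, in \cref{diskew_twist}, where $\lambda_a \in \Aut(D,\tr)$ is deduced from $\lambda_a \in \Aut(D,\vd,\dv)$. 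The two subsidiary observations you single out, namely that $a \dv a^{I_e}$ is idempotent in $(D,\dv)$ and $a^{I_e} \vd a$ is idempotent in $(D,\vd)$, hence both are bar-units by $2)$ of \cref{g-digroup_decomp}, are indeed the only nontrivial cancellations required; for the latter one can even note directly that $(a^{I_e}\vd a)\vd x = (a^{I_e}\dv a)\vd x = x$ by \eqref{lbarside} and \eqref{def:inverse}, so that $a^{I_e}\vd a$ acts as a left identity for $\vd$ without any appeal to idempotency. Together with \cref{rem:inverses} (i.e.\ $2)$ of \cref{prop:gdigroup_properties}) for well-definedness, your bookkeeping of $a^{I_e}$ versus $a^{J_e}$ is consistent throughout, and the explicit inverse $\mu_a(b) = a \vd b \dv a^{-1}$ does verify $L_a\mu_a = \mu_aL_a = \id_D$ as you claim. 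Finally, your alternative route through the structural isomorphism $D \cong G \times E$ recalled after \cref{ex:digroup}, under which $(g,e)\tr(h,f) = (g^{-1}hg,\psi_g(f))$, is precisely the content of \cref{lemma:conj_rack_decomp}, which the paper likewise states without proof; so either of your two arguments supplies a self-contained justification of a fact the paper delegates to the references.
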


As already clear in \cite{Res24}, conjugation racks admit a structural result that will come useful in later considerations.

\begin{lemma}\label{lemma:conj_rack_decomp}
    Let $(D,\vd,\dv)$ be a $g$-digroup and let $G$ be its groupal component.
    Then, the map $\psi:G \to \Sym(E(D))$ defined by setting $\psi_g(e) := g^{-1} \vd e \dv g$, for all $g \in G$ and $e \in E(D)$, determines a right group action such that
    \begin{align*}
        a \tr b = (g_a^{-1} \vd g_b \dv g_a) \vd \psi_{g_a}(e_b) \,,
    \end{align*}
    holds for all $a,b \in D$.
\end{lemma}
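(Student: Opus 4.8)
The plan is to prove the statement in two stages: first establish that $\psi$ is a well-defined right action of $G$ on the set of idempotents $E(D)$, and then derive the displayed decomposition of $a \tr b$ by substituting the additive decompositions $a = g_a \vd e_a$ and $b = g_b \vd e_b$ into the conjugation formula. The key algebraic facts at my disposal are the bar-side irrelevance laws \eqref{inn_assoc}--\eqref{lbarside}, the properties of unilateral inverses collected in \cref{prop:gdigroup_properties}, and the structural decomposition $D = G \vd E(D)$ guaranteed by \cref{g-digroup_decomp}.

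First I would verify that $\psi_g(e) = g^{-1} \vd e \dv g$ indeed lands in $E(D)$ for $g \in G$ and $e \in E(D)$. This should follow because conjugation by a groupal element preserves idempotency: using that $e \dv e = e$ together with mixed associativity, one checks $\psi_g(e) \vd \psi_g(e) = \psi_g(e)$ (or the analogous $\dv$-idempotency), so by \cref{g-digroup_decomp} part 2) the element $\psi_g(e)$ is a bar-unit. To see that $\psi$ is a \emph{right} action I would compute $\psi_{gh}(e)$ and $\psi_h(\psi_g(e))$ and check they agree: expanding $\psi_h(\psi_g(e)) = h^{-1} \vd (g^{-1} \vd e \dv g) \dv h$ and using \cref{prop:gdigroup_properties} part 3) to combine $(g^{-1} \vd h^{-1})$-type products into $(hg)^{-1}$ should yield $\psi_{gh}(e)$ precisely when the action is on the right (the order reversal in part 3) is exactly what forces the right-handedness). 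I would also note that each $\psi_g$ is bijective with inverse $\psi_{g^{-1}}$, so the action lands in $\Sym(E(D))$.

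Next I would establish the decomposition formula itself. Writing $a \tr b = a^{-1} \vd b \dv a$ and substituting $a = g_a \vd e_a$, $b = g_b \vd e_b$, the main task is to push the idempotent factors to the right and isolate a clean groupal conjugate. Using \cref{rem:inverses} the inverse $a^{-1}$ on the bar side is insensitive to the choice of bar-unit, and by \cref{prop:gdigroup_properties} part 3) one has $a^{-1} = (g_a \vd e_a)^{-1} = e_a^{-1} \vd g_a^{-1}$ in the appropriate sense; since $e_a$ is a bar-unit, the factor $e_a^{-1}$ should be absorbable. Then the mixed associativity laws let me regroup $g_a^{-1} \vd g_b \vd e_b \dv g_a$ and, after inserting bar-units strategically, factor it as $(g_a^{-1} \vd g_b \dv g_a) \vd (g_a^{-1} \vd e_b \dv g_a)$, the second factor being exactly $\psi_{g_a}(e_b)$. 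The bar-side irrelevance \eqref{lbarside} and \eqref{rbarside} are what make the two separate conjugations legitimate across the $\vd$/$\dv$ divide.

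The main obstacle I anticipate is the careful bookkeeping of bar-units when splitting the single conjugation $a^{-1} \vd b \dv a$ into a product of a groupal conjugate and an idempotent conjugate. The subtlety is that $e_b$ is a bar-unit but $g_b$ is not, so I must track which bar-unit serves as the local identity at each step and confirm — via \cref{prop:gdigroup_properties} part 2) and \cref{rem:inverses} — that the final expression is independent of these auxiliary choices. A secondary care point is ensuring the groupal factor $g_a^{-1} \vd g_b \dv g_a$ genuinely lies in $G = D \vd 0$ (equivalently that no stray idempotent contamination survives); this is where the interplay between \eqref{inn_assoc} and the fixed decomposition from \cref{g-digroup_decomp} must be invoked explicitly rather than treated as automatic.
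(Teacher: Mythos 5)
Your overall plan is sound, and it is worth noting that the paper itself gives no proof of this lemma --- it is presented as ``already clear in \cite{Res24}'' --- so there is no internal argument to compare against; what you propose is the natural verification, and its main steps do go through. Concretely: $\psi_g(e) \vd y = ((g^{-1}\vd e)\vd g)\vd y = \xi \vd y = y$ for every $y \in D$ by \eqref{lbarside} and the bar-unit property of $e$, so $\psi_g(e)$ is idempotent and hence a bar-unit by part 2) of \cref{g-digroup_decomp}; the right-action law $\psi_h\psi_g = \psi_{g\vd h}$ follows from \eqref{inn_assoc}, \eqref{rbarside} and $(g\vd h)^{-1} = h^{-1}\vd g^{-1}$ in $G$; and for the displayed identity one has $a^{-1}\vd y = g_a^{-1}\vd y$ (since $e_a^{I_\xi} = \xi$, and $g_a^{I_\xi}\vd y = g_a^{J_\xi}\vd y = g_a^{-1}\vd y$ by part 2) of \cref{prop:gdigroup_properties}) and $y \dv a = y \dv g_a$ by \eqref{rbarside}, after which expanding $(g_a^{-1}\vd g_b \dv g_a)\vd \psi_{g_a}(e_b)$ with \eqref{lbarside} and cancelling $g_a \vd g_a^{-1} = \xi$ recovers $g_a^{-1}\vd g_b \vd e_b \dv g_a = a \tr b$.

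The one step you must delete rather than carry out is your ``secondary care point'': the factor $g_a^{-1}\vd g_b \dv g_a$ does \emph{not} lie in $G$ in general, and the lemma makes no such claim. In the model of \cref{ex:digroup} with base bar-unit $\xi = (1,e_0)$, taking $g_a = (g,e_0)$ and $g_b = (h,e_0)$ gives $g_a^{-1}\vd g_b \dv g_a = (g^{-1}hg,\,\psi_g(e_0))$, which lies in $G \times \{e_0\}$ only when the structure action fixes $e_0$; an attempt to ``confirm'' that membership would therefore fail. Fortunately it is also unnecessary: by \eqref{lbarside}, $(x \dv g_a)\vd y = (x \vd g_a)\vd y$, so any idempotent ``contamination'' of the left factor is erased upon right multiplication by $\vd$, and the stated identity holds regardless. (If one wants the canonical decomposition of $a \tr b$, its groupal component is $(g_a^{-1}\vd g_b \dv g_a)\vd \xi = g_a^{-1}\vd g_b \vd g_a$ and its idempotent component is $\psi_{g_a}(e_b)$ --- compare the projection-by-$\vd\,0$ device the paper uses in \cref{rem:diskew_decomp} for the analogous phenomenon.) With that expectation removed, your argument is correct and complete.
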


\medskip

We now state the main result of this section but postpone its proof after two technical lemmas.

\begin{theor}\label{theor:diskew_sol}
    Let $(D,\vd,\dv,\circ)$ be a di-skew brace and define a map $r:D \times D \to D \times D$ by setting
    \begin{align*}
        r(a,b) = (\lambda_a(b),(\lambda_a(b))^- \circ (a \dv \lambda_a(b))) \,,
    \end{align*}
    for all $a,b \in D$.
    Then $(D,r)$ is a bijective non-degenerate solution.
    In particular, the derived shelf associated with $r$ is $\Conj(D,\vd,\dv)$.
\end{theor}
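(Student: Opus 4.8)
The plan is to verify that the map $r$ defined in the statement is exactly the solution produced by the twist $\lambda$ via \cref{thm:twist_solution}, applied to the shelf $\tr = \Conj(D,\vd,\dv)$. Concretely, I would first check that $\lambda:D \to \Aut(D,\vd,\dv)$, as established in \cref{digroup_lambda}, actually takes values in $\Aut(D,\tr)$; that is, each $\lambda_a$ is not merely an additive automorphism but also respects the conjugation operation $b \tr c = b^{-1} \vd c \dv b$. This should follow from $\lambda_a$ being an automorphism of $(D,\vd,\dv)$ together with the compatibility of the unilateral inverses with homomorphisms noted after the definition of $g$-digroup (i.e. $\lambda_a(b^{-1}) = \lambda_a(b)^{-1}$ in the appropriate sense). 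This is the content of the first technical lemma alluded to before the theorem.

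Next I would show that $\lambda$ satisfies the twist identity \eqref{eq:twist} relative to $\tr$. The natural route is to exploit that $\lambda:(D,\circ) \to \Aut(D,\vd,\dv)$ is a semigroup homomorphism (point 2 of \cref{digroup_lambda}), and to reconcile the homomorphism law $\lambda_a \lambda_b = \lambda_{a\circ b}$ with the twist law, where the index on the right-hand side of \eqref{eq:twist} is $\varphi_a(b) = \lambda_a(b)$ and the second index unwinds to something involving $\lambda_a(b) \tr a$. The identity to establish is that $a \circ b$ and the twisted index $\lambda_a(b) \vd (\text{correction})$ produce the same automorphism; here \cref{rem:lambda_index} (equating $\lambda_{a\vd b}$ with $\lambda_{a\dv b}$) and the explicit form of $\tr$ should let me rewrite $\lambda_a(b) \tr a$ and collapse the composite index back to $a \circ b$. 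This is the second technical lemma. Granting the twist law, \cref{thm:twist_solution} immediately yields that $r$ as written — with $\varphi = \lambda$ and $r(a,b) = (\lambda_a(b), \lambda_{\lambda_a(b)}^{-1}(\lambda_a(b) \tr a))$ — is a left non-degenerate solution whose left derived solution is $r_\tr$, i.e. whose derived shelf is $\Conj(D,\vd,\dv)$. I would check that the second coordinate in the theorem's display, namely $(\lambda_a(b))^- \circ (a \dv \lambda_a(b))$, coincides with $\lambda_{\lambda_a(b)}^{-1}(\lambda_a(b)\tr a)$ by substituting the formula $\lambda_x^{-1}(y) = x^- \circ (x \vd y)$ from \cref{digroup_lambda} and simplifying the conjugation $\lambda_a(b) \tr a$ using $\vd/\dv$ bar-side irrelevance.

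It then remains to upgrade left non-degeneracy to full non-degeneracy and bijectivity. For this I would invoke \cref{prop:bijleft-right}: a bijective left non-degenerate solution is right non-degenerate precisely when its square map $\mathfrak{q}(a) = \lambda_a^{-1}(a)$ is bijective. Computing $\lambda_a^{-1}(a) = a^- \circ (a \vd a)$, I expect this map to be a bijection of $D$ because it is built from the right-group left-division in $(D,\circ)$ composed with the squaring-type map $a \mapsto a \vd a$, whose invertibility should follow from the $g$-digroup structure and the decomposition of \cref{convenzione}. Since a left non-degenerate solution is automatically bijective by the result of \cite{JePi25} cited in the introduction, establishing bijectivity of $\mathfrak{q}$ closes the argument. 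The main obstacle I anticipate is the second step: verifying the twist identity \eqref{eq:twist} requires carefully tracking which bar-units appear and confirming that the index $\varphi_{\varphi_a(b)}(\cdots)$ genuinely reduces to $\lambda_{a\circ b}$, where the interplay between the additive inverses, the conjugation $\tr$, and the homomorphism property of $\lambda$ must be handled with the correct use of \eqref{diskew3} and \cref{rem:lambda_index} rather than a naive group computation.
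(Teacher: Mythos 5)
Your first two steps coincide with the paper's own route: the paper's \cref{diskew_twist} proves exactly what you outline, namely that each $\lambda_a$ lies in $\Aut(D,\tr)$ and that the twist identity \eqref{eq:twist} collapses, via \cref{rem:lambda_index} and the homomorphism property $\lambda_a\lambda_b=\lambda_{a\circ b}$, once one computes $\lambda^{-1}_{\lambda_a(b)}\left(\lambda_a(b)\tr a\right)=(\lambda_a(b))^-\circ\left(a\dv\lambda_a(b)\right)$. Up to that point your plan is sound and is essentially the paper's argument.

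The genuine gap is in your final step, in two respects. First, you misquote \cite{JePi25}: that result says a \emph{non-degenerate} (left \emph{and} right) solution is automatically bijective; it does not say a left non-degenerate solution is bijective (it need not be). You cannot use it to supply the bijectivity hypothesis of \cref{prop:bijleft-right}, since right non-degeneracy is precisely what you are trying to prove --- that reasoning is circular. The correct argument, and the paper's, is that $\Conj(D,\vd,\dv)$ is a rack, so the derived solution $r_\tr$ is bijective, and $r$ is $D$-isomorphic to $r_\tr$ by \cref{prop:derived_sol}, hence bijective. Second, and more seriously, you never actually prove that the square map $\mathfrak{q}(a)=a^-\circ(a\vd a)$ is bijective; you only state that you ``expect'' it. Your heuristic --- left division in $(D,\circ)$ ``composed'' with $a\mapsto a\vd a$ --- is not a composition of maps at all, since both factors involve the same variable $a$; moreover $a\mapsto a\vd a$ is not bijective in general (for a group viewed as $(D,\vd,\vd)$ it is the squaring map), so the heuristic cannot be repaired into a proof. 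This bijectivity is the technical heart of the theorem: the paper establishes it by computing $\mathfrak{q}(a)=(m_a^-)^{-1}\vd u_a$ via \eqref{diskew1}, proving the exchange identities $g_{m_a}=g_a$ and $m_{g_a}=m_a$ of \cref{lemma:diskew_mult}, and exhibiting the explicit two-sided inverse $\mathfrak{p}(a)=(g_a^{-1})^-\circ e_a$, where both additive and multiplicative decompositions of \cref{convenzione} interact in an essential way. Without an argument of this kind, right non-degeneracy remains unproven in your proposal.
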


\medskip

\begin{lemma}\label{diskew_twist}
    Let $(D,\vd,\dv,\circ)$ be a di-skew brace and let $(D,\triangleright) = \Conj(D,\vd,\dv)$ be its conjugation rack.
    Then, the map $\lambda:D \to \Aut(D,\vd,\dv)$ is a twist for $(D,\triangleright)$.
    \begin{proof}
        Notice first that $\lambda_a \in \Aut(D,\triangleright)$, for all $a \in D$, since, in particular, it is an automorphism of the $g$-digroup.
        Therefore, to obtain the assertion, it sufficient to show that for all $a,b,c \in D$ the following holds:
        \begin{align*}
            \lambda_a\lambda_b
            &=
            \lambda_{\lambda_a(b)}\lambda_{\lambda^{-1}_{\lambda_a(b)}(\lambda_a(b) \triangleright a)}
        \end{align*}
        To facilitate the computation of the right-hand side, let us first note that
        \begin{align*}
            \lambda^{-1}_{\lambda_a(b)}(\lambda_a(b) \triangleright a)
            &= \lambda^{-1}_{\lambda_a(b)}((\lambda_a(b))^{-1} \vd a \dv \lambda_a(b)) \\
            &= (\lambda_a(b))^- \circ \left( \lambda_a(b) \vd (\lambda_a(b))^{-1} \vd a \dv \lambda_a(b) \right) \\
            &= (\lambda_a(b))^- \circ \left( a \dv \lambda_a(b) \right) \,,
        \end{align*}
        holds for all $a,b \in D$.
        From \cref{digroup_lambda}, it follows that the right-hand side of the required identity is equal to
        \begin{align*}
            \lambda_{a\, \dv\, \lambda_a(b)}(c)
            &= 
            \lambda_{a\, \vd\, \lambda_a(b)}(c)\,,
            \quad \text{applying \cref{rem:lambda_index},} \\
            &=
            \lambda_{a \circ b} \\
            &=
            \lambda_a\lambda_b(c) \,,
        \end{align*}
        thus concluding the proof.
    \end{proof}
\end{lemma}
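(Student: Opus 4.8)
The plan is to verify the two conditions that make $\lambda$ a twist of the conjugation rack $(D,\tr) = \Conj(D,\vd,\dv)$, as encoded in \eqref{eq:twist}: namely, that each $\lambda_a$ is an automorphism of $(D,\tr)$, and that the compatibility identity \eqref{eq:twist} holds. The first condition is immediate. By \cref{digroup_lambda}, every $\lambda_a$ is an automorphism of the additive $g$-digroup $(D,\vd,\dv)$; since the conjugation operation $a \tr b = a^{-1} \vd b \dv a$ is built solely from $\vd$, $\dv$, and unilateral inversion, any automorphism of the $g$-digroup preserves it. Hence $\lambda_a \in \Aut(D,\tr)$ for all $a \in D$, and it remains only to establish \eqref{eq:twist}.

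The core of the argument is the simplification of the second index on the right-hand side of \eqref{eq:twist}, namely $\lambda^{-1}_{\lambda_a(b)}(\lambda_a(b) \tr a)$. First I would expand the conjugation as $\lambda_a(b) \tr a = (\lambda_a(b))^{-1} \vd a \dv \lambda_a(b)$, and then apply the inverse formula $\lambda_c^{-1}(d) = c^- \circ (c \vd d)$ from \cref{digroup_lambda}. The bar-unit $\lambda_a(b) \vd (\lambda_a(b))^{-1}$ then cancels on the bar side, yielding
\begin{align*}
    \lambda^{-1}_{\lambda_a(b)}(\lambda_a(b) \tr a) = (\lambda_a(b))^- \circ (a \dv \lambda_a(b)) \,.
\end{align*}

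With this identity the right-hand side of \eqref{eq:twist} reads $\lambda_{\lambda_a(b)}\,\lambda_{(\lambda_a(b))^- \circ (a \dv \lambda_a(b))}$. I would then collapse this composite using that $\lambda:(D,\circ) \to \Aut(D,\vd,\dv)$ is a semigroup homomorphism (part $2)$ of \cref{digroup_lambda}): the product equals $\lambda$ evaluated at the $\circ$-product of the two indices, and by the defining property of left division this product is exactly $a \dv \lambda_a(b)$. Thus the right-hand side equals $\lambda_{a \dv \lambda_a(b)}$. Invoking \cref{rem:lambda_index} to swap $\dv$ for $\vd$ gives $\lambda_{a \vd \lambda_a(b)}$, and since $a \vd \lambda_a(b) = a \vd a^{-1} \vd a \circ b = a \circ b$, a final use of the homomorphism property yields $\lambda_{a \circ b} = \lambda_a \lambda_b$, matching the left-hand side and completing the verification.

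I expect the main obstacle to be the bookkeeping in the index computation: the manipulations with unilateral inverses on the bar side are unambiguous only by virtue of \cref{rem:inverses}, and one must track carefully how the cancellation of bar-units interacts with the left-division operator $c^- \circ (-)$. Once that computation is secured, the remainder is a direct chaining of \cref{digroup_lambda} and \cref{rem:lambda_index}.
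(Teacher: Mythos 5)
Your proposal is correct and follows essentially the same route as the paper's own proof: you reduce \eqref{eq:twist} to the index computation $\lambda^{-1}_{\lambda_a(b)}(\lambda_a(b) \tr a) = (\lambda_a(b))^- \circ (a \dv \lambda_a(b))$, collapse the composite via the semigroup homomorphism property of $\lambda$ from \cref{digroup_lambda}, and finish with \cref{rem:lambda_index} and $a \vd \lambda_a(b) = a \circ b$. All steps, including the justification via \cref{rem:inverses} for the bar-side cancellations, match the paper's argument.
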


\begin{lemma}\label{lemma:diskew_mult}
    Let $(D,\vd,\dv,\circ)$ be a di-skew brace and fix $\xi \in E(D)$.
    Under the decomposition provided by $\xi$, the following hold for all $a \in D$:
    \begin{align}
        g_{m_a} &= g_a \label{lem:diskew_add_mul1}\,,\\
        m_{g_a} &= m_a \label{lem:diskew_add_mul2}\,.
    \end{align}
    \begin{proof}
        Fix $a \in D$.
        For the first identity, note that 
        \begin{align*}
            m_a = a \circ \xi = a \vd a^{-1} \vd a \circ \xi = a \vd \lambda_a(\xi)
        \end{align*}
        which implies that $g_{m_a} = g_a$, since $\lambda_a(\xi) \in E(D)$.
        For the second identity, it is enough to note that
        \begin{align*}
            m_a = a \circ \xi = (g_a \vd e_a) \circ \xi
            \overeq{\eqref{diskew3}} (g_a \dv e_a)\circ\xi
            = g_a\circ\xi
            = m_{g_a} \,,
        \end{align*}
        obtaining the assertion.
    \end{proof}
\end{lemma}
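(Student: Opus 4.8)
The plan is to establish the two identities separately by comparing explicit projection formulas for the groupal components and invoking the distributivity axiom \eqref{diskew3} at the decisive step. Under the fixed idempotent $\xi$, the additive groupal component is the projection $g_a = a \vd \xi$ onto $G = D \vd \xi$, while the multiplicative groupal component is $m_a = a \circ \xi$ onto $M = D \circ \xi$; I would record these formulas at the outset, since both assertions reduce to manipulating such projections together with the bar-unit identities $e \vd a = a = a \dv e$ valid for every $e \in E(D)$ (recall that $E(D,\vd,\dv) = E(D,\circ)$ by \cref{common_idemp}, so $\xi$ is a bar-unit of the additive structure).

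For $g_{m_a} = g_a$, my approach is to exhibit $m_a$ as an additive right-translate of $a$ by a bar-unit. Using that $a \vd a^{-1}$ is a bar-unit, hence a left identity for $\vd$, I would rewrite $m_a = a \circ \xi = a \vd a^{-1} \vd a \circ \xi = a \vd \lambda_a(\xi)$. Since $\lambda_a \in \Aut(D,\vd,\dv)$ by \cref{digroup_lambda}, it preserves the halo, so $\lambda_a(\xi) \in E(D)$. Then, by associativity of $\vd$ and the fact that the bar-unit $\lambda_a(\xi)$ is a left identity (whence $\lambda_a(\xi) \vd \xi = \xi$), I obtain $g_{m_a} = m_a \vd \xi = a \vd (\lambda_a(\xi) \vd \xi) = a \vd \xi = g_a$. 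In other words, appending an idempotent on the right leaves the additive groupal component unchanged.

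For $m_{g_a} = m_a$, I would instead start from the additive decomposition $a = g_a \vd e_a$ and transport it across the two additive operations. Applying \eqref{diskew3} gives $m_a = (g_a \vd e_a) \circ \xi = (g_a \dv e_a) \circ \xi$, and since $e_a \in E(D)$ is a bar-unit it is a right identity for $\dv$, so that $g_a \dv e_a = g_a$; hence $m_a = g_a \circ \xi = m_{g_a}$.

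The computations are short, so I anticipate no serious difficulty; the only delicate point is selecting the correct projection formulas and collapsing the idempotent factors with the bar-unit identities at exactly the right moments. The genuinely structural ingredient is the use of \eqref{diskew3} in the second identity: it is precisely the interchangeability of $\vd$ and $\dv$ in front of a $\circ$-product that renders the multiplicative groupal component insensitive to the additive idempotent component.
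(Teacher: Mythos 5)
Your proposal is correct and follows essentially the same route as the paper: the identity $m_a = a \vd \lambda_a(\xi)$ with $\lambda_a(\xi) \in E(D)$ for the first claim, and the switch $(g_a \vd e_a) \circ \xi = (g_a \dv e_a) \circ \xi$ via \eqref{diskew3} followed by absorbing the bar-unit $e_a$ for the second. The only difference is cosmetic: you make explicit the projection formulas $g_a = a \vd \xi$, $m_a = a \circ \xi$ and the final step $g_{m_a} = a \vd (\lambda_a(\xi) \vd \xi) = g_a$, which the paper leaves implicit.
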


\medskip

\begin{proof}[Proof of \cref{theor:diskew_sol}]
    First, let us observe that $r$ is a bijective and left non-degenerate solution, thanks to \cref{thm:twist_solution} and \cref{diskew_twist}.
    Therefore, in order to prove the right non-degeneracy of $r$, it sufficient to verify that the square map $\mathfrak{q}:D \to D$ of $(D,r)$ is bijective and to apply \cref{lem:rho_nondegenerate_sol}.
    With this aim, let us {first recall that under the notation of \cref{convenzione} if $a,b \in D$ then $a^- \circ b = m_a^- \circ b$ coincides with the unique element $x \in D$ such that $a \circ x = b$.
    If $a \in D$, it follows that}
    \begin{align*}
        \mathfrak{q}(a) 
        &= \lambda^{-1}_a(a) 
        = a^{-} \circ (a \vdash a) 
        = {m_a^{-} \circ (a \vdash a)}\\
        &\overset{\eqref{diskew1}}{=} m_a^{-}\circ a \vdash (m_a^{-})^{-1} \vdash m_a^{-} \circ a 
        = (m_a^{-})^{-1} \vdash u_a \,,
    \end{align*}
    where we have used the fact that $m_a^-\circ a = u_a\in E(D)$ {is a bar-unit}.
    Define the map $\mathfrak{p}:D \to D$ by setting $\mathfrak{p}(a) = ({g^{-1}_a})^{-} \circ e_a$, for all $a \in D$.
    Fix $a \in D$ and notice that
    \begin{align*}
        \mathfrak{q}(\mathfrak{p}(a)) 
        =\mathfrak{q}(m_{g^{-1}_a}^{-} \circ e_a)
        = (m_{g^{-1}_a})^{-1} \vd e_a 
        = g^{-1}_{m_{g^{-1}_a}} \vd e_a 
        \overeq{\eqref{lem:diskew_add_mul1}} g^{-1}_{g^{-1}_a}\vd e_a 
        = g_a \vd e_a
        = a \,,
    \end{align*}
    proving that $\mathfrak{q}\mathfrak{p} = \id_D$.
    Moreover, we also have that
    \begin{align*}
        \mathfrak{p}(\mathfrak{q}(a)) 
        =\mathfrak{p}(g_{m^{-}_a}^{-1} \vd u_a)
        = g^{-}_{m^{-}_a} \circ u_a
        = m^{-}_{g_{m^{-}_a}} \circ u_a
        \overeq{\eqref{lem:diskew_add_mul2}} m^{-}_{m^{-}_a} \circ u_a
        = m_a \circ u_a
        = a \,,
    \end{align*}
    proving that $\mathfrak{pq} = \id_D$.
    Therefore $\mathfrak{q}$ is invertible and thus bijective, with $\mathfrak{p}$ its inverse.
\end{proof}

\begin{rem}
    The left derived shelf associated to a skew brace solution is the conjugation quandle of its additive group (see, for instance, \cite{DoRySte24}).
    As a consequence, we deduce that if $(D,\vd,\dv,\circ)$ is a di-skew brace then its associated solution $r$ is not necessarily equivalent to a skew brace solution.
    Indeed, based on the fact that the left derived shelf of $r$ is $\Conj(D,\vd,\dv)$ {and thanks to \cref{rem:shelf_hom}}, to prove this assertion, it is enough to apply \cref{lemma:conj_rack_decomp} and consider a $g$-digroup with non-trivial conjugation action on the set of idempotents.
\end{rem}

{\begin{ex}
    The simplest di-skew brace solutions that do not come from a skew brace are those of left derived type $r(a,b) = (b, b^{-1} \vdash a \dashv b)$ defined on a g-digroup $(D,\vdash,\dashv)$ such that there exist $a \in D$ and $e \in E(D,\vdash,\dashv)$ satisfying the condition $a^{-1} \vdash e \dashv a \neq e$.

    An explicit example can be constructed as in \cref{ex:digroup}.
    Let $G = \langle g \rangle$ be the cyclic group of order 4 and consider its natural right action $\psi:G \to E$ on the set $E = \{1,2\}$ with $\psi_g = (12)$.
    Then $(G \times E,\vdash,\dashv)$ where the operations are defined by
    \begin{align*}
        (a,e) \vdash (b,f) = (ab,f) \,,\quad (a,e) \dashv (b,f) = (ab,\psi_b(e))
    \end{align*}
    for all $a,b \in G$ and $e,f \in E$, is a $g$-digroup belonging to the above class.
    \end{ex}}

\smallskip

\section{Compatible averaging operators on groups and examples}\label{Sec3}

The aim of this section is to show how certain maps defined on groups that we name \emph{compatible averaging operators} naturally lead to a rich class of examples of $g$-digroups. In particular, these maps include idempotent group endomorphisms and also yield an automatic identification of the set of bar units of the $g$-digroups they provide. Lastly, we show how to obtain di-skew braces having as additive structure a $g$-digroup that comes from compatible averaging operators.
\smallskip

\subsection{Averaging operators on groups}
Let us begin by introducing some definitions and properties with some due remarks.

\begin{defin}
    Let $D$ be a group.
    A \emph{left} (resp. \emph{right}) \emph{averaging operator on $D$} is a map $f:D \to D$ such that the following hold:
    \begin{align}\label{eq:la-ra-op}
        \forall\, x,y \in D\qquad 
        f\left(x\right) f\left(y\right)
        = f\left(f\left(x\right)\, y\right)
        \qquad \text{(resp.} \ f\left(x\right) f\left(y\right)
        = f\left(x\, f\left(y\right)\right)\text{)}.
    \end{align}
    A map $f$ is called an \emph{averaging operator} on $D$ if it is both a left and right averaging operator. Note that this last notion was introduced in \cite{ZhGa24x}.
\end{defin}

\begin{ex}
    Let $D$ be a group, let $c \in D$ and define a map $f:D \to D$ by setting $f(x) = xc$, for all $x \in D$.
    If $c \notin Z(D)$, then $f$ is a left averaging operator which is not a right averaging operator.
    Analogously, the map $g:D \to D$ defined by setting $g(x) = cx$, for all $x \in D$, is a right averaging operator which is not a left averaging operator.
\end{ex}

Let us observe that every idempotent homomorphism on a group $D$ is an averaging operator on $D$. Moreover, note that if $f$ is either a left or right averaging operator on a group $D$ then $f\left(1_G\right) = 1_G$ (with $1_G$ the identity of $D$) if and only if $f$ is an idempotent map. On the other hand, there exist averaging operators that are not endomorphism.

\begin{ex}
    Let $D$ be a group {that admits a subgroup $N$ of index $2$ and assume that there exists $c \in C_D(N)$ such that $c \notin N$.}
    Define $f:D \to D$ by setting $f(x) = x$ if $x \in N$ and $f(x) = xc$ if $x\notin N$. 
    Then, $f$ is an averaging operator on $D$.
\end{ex}

\medskip

In the following result, we provide a method to construct $g$-digroups on a fixed group which employs a slight generalization of the notion of averaging operators on groups and is inspired by results in \cite[Proposition~5.6]{ChMa25x} and \cite[Proposition~3.8]{ZhGa24x}.
Moreover, to ease the notation, if $f$ is a left (resp. right) averaging operator on a group $D$ then we set $\ker(f) = \{x \in D \mid f(x) = 1\}$.
It is worth mentioning that in general $\ker(f)$ is not necessarily a subgroup of $D$.

\begin{defin}
    Let $D$ be a group and $f,g:D \to D$ maps. The pair $(f,g)$ is called an \emph{averaging pair} if the following identities 
    \begin{align}
        f(a)& f(b)
        = f(f(a)\,b)
        = f(a\, g(b))  \label{ex_KU_1}\\
        g(a)& g(b)
        = g(a\, g(b))
        = g(f(a)\, b)\label{ex_KU_2}
\end{align}
    hold, for all $a,b \in D$. Moreover, we say that $(f,g)$ is a \emph{compatible averaging pair} if $\ker(f)\cap\ker(g)\neq\emptyset$.
\end{defin}

\begin{prop}\label{ex_KU}
Let $D$ be a group and $(f,g):D\to D$ an averaging pair on $D$. Then, the structure $\left(D,\,\vd,\,\dv\right)$ is a disemigroup where we define
\begin{align*}
    a \vd b:= f(a)\, b
    \qquad\text{and}\qquad
    a \dv b:= a\, g(b),
\end{align*}
for all $a,b \in D$. Moreover, if $\left(f,g\right)$ is compatible, then $\left(D,\,\vd,\,\dv\right)$ is a $g$-digroup whose set of bar-units $E(D)$ coincide with $\ker(f) \cap \ker(g)$ and whose unilateral inverses are given by $I_e(a):= e\,g(a)^{-1}$ and $J_e(a):= f(a)^{-1}\,e$, for all $e \in E(D)$ and $a \in D$.
\begin{proof}
    The proof consists of a routine computation.
\end{proof}
\end{prop}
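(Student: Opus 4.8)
The plan is to verify the disemigroup axioms of \cref{def:digroups} directly from the definitions $a \vd b = f(a)\,b$ and $a \dv b = a\,g(b)$, reading off each one from the appropriate factor of the averaging-pair identities \eqref{ex_KU_1} and \eqref{ex_KU_2}, and then to treat the $g$-digroup structure separately under the compatibility hypothesis. Since the ambient $D$ is a group, every element is invertible, and this is what ultimately furnishes both the bar-units and the unilateral inverses.

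First I would establish the five disemigroup conditions. Associativity of $\vd$ unravels to $f(f(a)b)\,c = f(a)f(b)\,c$, which is exactly the first equality of \eqref{ex_KU_1}; dually, associativity of $\dv$ reduces to $g(b)g(c) = g(b\,g(c))$, which is part of \eqref{ex_KU_2}. Inner associativity \eqref{inn_assoc} is immediate, since both $a \vd (b \dv c)$ and $(a \vd b) \dv c$ evaluate to $f(a)\,b\,g(c)$. For left bar-side irrelevance \eqref{lbarside} I would compare $f(f(a)b)\,c$ with $f(a\,g(b))\,c$, which agree via the chain $f(f(a)b) = f(a)f(b) = f(a\,g(b))$ recorded in \eqref{ex_KU_1}; symmetrically, right bar-side irrelevance \eqref{rbarside} follows from $g(f(b)c) = g(b)g(c) = g(b\,g(c))$ in \eqref{ex_KU_2}.

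Next I would invoke compatibility. An element $e$ is a bar-unit precisely when $f(e)\,a = a$ and $a\,g(e) = a$ hold for all $a \in D$, which by invertibility in $D$ is equivalent to $f(e) = 1 = g(e)$, that is, $e \in \ker(f) \cap \ker(g)$; this identifies $E(D)$ as claimed, and since compatibility guarantees this set is nonempty, $(D,\vd,\dv)$ is a dimonoid. Finally, for fixed $e \in E(D)$ and $a \in D$ I would check that the proposed elements satisfy the defining relations of \eqref{def:inverse}, namely $I_e(a) \dv a = e\,g(a)^{-1}g(a) = e$ and $a \vd J_e(a) = f(a)f(a)^{-1}e = e$, and then establish uniqueness by noting that $x\,g(a) = e$ and $f(a)\,y = e$ each admit a single solution because $g(a)$ and $f(a)$ are invertible in the group $D$.

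I do not expect a genuine obstacle: the entire argument is a matter of matching each disemigroup axiom to the correct factor of \eqref{ex_KU_1} or \eqref{ex_KU_2} and exploiting the invertibility inherited from the ambient group. The only mild subtlety worth flagging is that $\ker(f)$ and $\ker(g)$ need not individually be subgroups of $D$, so the bar-units must be characterised purely elementwise rather than through any subgroup structure; this causes no difficulty, but it is why the uniqueness of the inverses leans on group invertibility instead of on any algebraic closure of the kernels.
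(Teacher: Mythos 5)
Your proposal is correct and matches the paper's approach: the paper simply declares the proof ``a routine computation,'' and your argument is exactly that computation carried out in full, matching each disemigroup axiom to the correct factor of \eqref{ex_KU_1} or \eqref{ex_KU_2}, characterising $E(D)$ as $\ker(f)\cap\ker(g)$ elementwise, and using group invertibility for existence and uniqueness of the unilateral inverses. No gaps.
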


Notice that if $f:D \to D$ is an averaging operator on a group $D$, then $(f,f)$ is trivially an averaging pair which is compatible if and only if $\ker(f) \neq \emptyset$.
Therefore, every such map may serve to construct examples of $g$-digroups.

\begin{ex}\label{ex-Kl}
    Let $f,g:D \to D$ be idempotent endomorphisms of a group $D$ such that\ $fg = f$\ and\ $gf = g$.
    Then, $(f,g)$ is a compatible averaging pair on $D$.
    Notice that the previous two identities are equivalent to the fact that $\ker(f) = \ker(g)$, as it is clear by employing the natural factorization induced by $f$ and $g$ on $D$.
    As an example of such a pair, we can consider $D =\{1,\, a,\, b,\, ab\}$ the Klein group and the idempotent endomorphisms $f,g:D\to D$ defined by setting $f\left(a\right) = 1$, $f\left(b\right) = b$, and $g\left(a\right) = 1$, $g\left(b\right) = ab$, respectively. 
\end{ex}

\begin{rem}\label{rem-isom-g-dgroups}
    Let $(f,g)$ be a compatible averaging pair on a group $D_1$ and consider its associated $g$-digroup $(D_1,\vd,\dv)$.
    Moreover, consider an averaging operator $h$ on a group $D_2$ such that $\ker(h) \neq \emptyset$ and denote by $(D_2,\vd,\dv)$ its associated $g$-digroup.
    If $\varphi:(D_1,\vd,\dv) \to (D_2,\vd,\dv)$ is a $g$-digroup isomorphism, then easy calculations show that the identities
    \begin{align*}
        \varphi f(a) = h(\varphi(a)) \varphi(1)
        \qquad\text{and}\qquad
        \varphi g(a) = \varphi(1) h(\varphi(a)) \,,
    \end{align*}
    hold for all $a \in D_1$.
    In particular, we deduce that $f=g$ if and only if $\varphi\left(1\right)\in C_{D_2}\left(\text{Im}(h)\right)$.
    Following this reasoning, the example defined on the Klein group in \cref{ex-Kl} provides a concrete instance of a compatible averaging pair that cannot be obtained by a single averaging operator. 
\end{rem}

Let us conclude with a further example of a class of $g$-digroups that do not necessarily rise from averaging operators.

\begin{ex}\label{ex:fg=gf}
    Let $D$ be a group, $f,g$ idempotent endomorphisms of $D$ such that $fg = gf$ and $\text{Im} (f)$ is an abelian group. Then $\left(D,\, \vd,\, \dv\right)$ is a $g$-digroup with
    \begin{align*}
        a\vd b = f(a)fg(a)^{-1}b
        \qquad\text{and}\qquad
        a\dv b = af(b)fg(b)^{-1},
    \end{align*}
    for all $a,b\in D$ such that $E(D)=\{a\in D\mid f(a) = fg(a)\}$.
\end{ex}

\medskip

\subsection{Constructions of di-skew braces through averaging operators}

In this section, we provide concrete instances of di-skew brace structures by employing averaging operators on groups as a tool.
\medskip

Before proceeding, let us first observe that if $D$ is a group, $f$ a left averaging operator and $e \in \ker(f)$ then a simple computation shows that 
    \begin{align}
        f(a)f(f(a)^{-1}\,e) = 1\qquad\text{and}\qquad 
        g(e\, g(a)^{-1}) g(a) = 1 \label{eq:averag_op_rem}
    \end{align}
    hold, for all $a,b \in D$.

\smallskip

\begin{prop}\label{prop:avg_diskewbrace}
    Let $D$ be a $g$-digroup obtained by a compatible averaging pair $(f,g)$  on a group $D$, and consider a left averaging operator $h$ on $D$. If we define a further binary operation $\circ$ on $D$ by setting
    \begin{align*}
        \forall\, a,b\in D \quad
        a \circ b := h(a)\,b
    \end{align*}
    then the structure $(D,\vd,\dv,\circ)$ is a di-skew brace if and only if the following identities
    \begin{align}
    h(a)f(b) &= f(h(a)b) f(a)^{-1} h(a) \label{avg_diskew1}\tag{AD1}\\
    g(a)g(b) &= g(h(a)b) \label{avg_diskew2}\tag{AD2}\\
    h(f(a)b) &= h(ag(b)) \label{avg_diskew3}\tag{AD3}
\end{align}
    hold, for all $a,b\in D$.
    \begin{proof}  
        The proof relies on verifying that the identities \eqref{avg_diskew1}, \eqref{avg_diskew2} and \eqref{avg_diskew3} can be obtained respectively by rewriting \eqref{diskew1}, \eqref{diskew2} and \eqref{diskew3} in terms of $f$, $g$ and $h$, {using the equalities \eqref{eq:averag_op_rem}}.    
    \end{proof}
\end{prop}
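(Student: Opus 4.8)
The plan is to work entirely inside the ambient group $D$, translating each of the three defining identities of a di-skew brace into a statement about $f$, $g$ and $h$, and then reading off \eqref{avg_diskew1}--\eqref{avg_diskew3} by cancellation in the group. Before touching \eqref{diskew1}--\eqref{diskew3} I would first dispose of the structural requirement that $(D,\circ)$ be a right group: since $a \circ b = h(a)\,b$, associativity of $\circ$ is precisely the left averaging identity $h(h(a)b) = h(a)h(b)$, while each left translation $x \mapsto h(a)\,x$ is a bijection of the group $D$; hence $(D,\circ)$ is a right group unconditionally, and the entire content of the statement rests on the three distributivity/irrelevance laws.

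The key preliminary step -- and the one that needs the most care -- is to pin down the unilateral-inverse expressions occurring in \eqref{diskew1}--\eqref{diskew3}. Using the explicit inverses $J_e(a) = f(a)^{-1}e$ and $I_e(a) = e\,g(a)^{-1}$ furnished by \cref{ex_KU}, together with \cref{rem:inverses} (which guarantees that the value is independent of the chosen bar-unit and side), I would establish the two clean reductions
\begin{align*}
    a^{-1} \vd b = f(a)^{-1}\,b \qquad\text{and}\qquad a \dv b^{-1} = a\,g(b)^{-1},
\end{align*}
valid for all $a,b \in D$. The first follows by writing $a^{-1} \vd b = f\bigl(f(a)^{-1}e\bigr)\,b$ and invoking the identity $f(a)f\bigl(f(a)^{-1}e\bigr) = 1$; the second from $a \dv b^{-1} = a\,g\bigl(e\,g(b)^{-1}\bigr)$ and its companion $g\bigl(e\,g(b)^{-1}\bigr)g(b) = 1$. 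Both are exactly the equalities recorded in \eqref{eq:averag_op_rem}. In particular one obtains $a^{-1} \vd a \circ c = f(a)^{-1}h(a)\,c$.

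With these reductions in hand the three translations become routine expansions, the only subtlety being to respect the precedence convention (multiplication binds tighter than the sums) and to keep each inverse adjacent to the bar-side of its operation. For \eqref{diskew1} I would expand the left-hand side to $h(a)f(b)c$ and, grouping the right-hand side as $(a\circ b) \vd \bigl(a^{-1} \vd (a\circ c)\bigr)$, obtain $f(h(a)b)\,f(a)^{-1}h(a)\,c$; cancelling $c$ yields \eqref{avg_diskew1}. For \eqref{diskew2} the left-hand side is $h(a)\,b\,g(c)$, while grouping the right-hand side as $\bigl((a\circ b) \dv a^{-1}\bigr) \dv (a\circ c)$ gives $h(a)\,b\,g(a)^{-1}g(h(a)c)$; cancelling $h(a)b$ yields \eqref{avg_diskew2}. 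Finally \eqref{diskew3} expands directly to $h(f(a)b)\,c = h(a\,g(b))\,c$, which is \eqref{avg_diskew3} after cancelling $c$. Since every manipulation is reversible -- cancellation in a group is an equivalence -- this argument establishes both implications at once, so I expect the main difficulty to be purely bookkeeping: selecting the correct $I_e$/$J_e$ representative and grouping the associative sums so that each inverse lands on its bar-side, after which the averaging identities collapse everything to a single group-level computation.
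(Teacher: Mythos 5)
Your proposal is correct and follows essentially the same route as the paper's proof: rewriting \eqref{diskew1}--\eqref{diskew3} in terms of $f$, $g$ and $h$ via the explicit unilateral inverses and the identities \eqref{eq:averag_op_rem}, then cancelling inside the ambient group, with reversibility giving both implications at once. Your explicit verification that $(D,\circ)$ is automatically a right group is a detail the paper leaves implicit, but it does not change the nature of the argument.
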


\smallskip

Note that if $(D,\vd,\dv)$ is a $g$-digroup obtained through a compatible averaging pair $(f,g)$, then the trivial di-skew brace on $D$ can be constructed with \cref{prop:avg_diskewbrace} by setting $h = f$.
On the other hand, the almost trivial di-skew brace on $D$ is recovered similarly as shown below, where we provide a dual version of \cref{prop:avg_diskewbrace}.

\begin{prop}\label{prop:avg_diskewbrace2}
    Let $D$ be a $g$-digroup obtained by a compatible averaging pair $(f,g)$ on a group $D$ and consider a right averaging operator $h$ on $D$. If we define a further binary operation $\circ$ on $D$ by setting
    \begin{align*}
        \forall\, a,b\in D \quad
        a \circ b := b\,h(a)
    \end{align*}
    then the structure $(D,\vd,\dv,\circ)$ is a di-skew brace if and only if the following identities
    \begin{align}
        f(b)f(a) &= f(b\,h(a))\,,\label{avg_diskew1-2}\tag{AD1$'$}\\
        g(b)h(a) &= h(a)g(a)^{-1}g(b\,h(a))\,,\label{avg_diskew2-2}\tag{AD2$'$}\\
        h(f(a)\,b) &= h(a\,g(b))\,,\label{avg_diskew3-2}\tag{AD3$'$}
    \end{align}
    hold, for all $a,b \in D$.
\end{prop}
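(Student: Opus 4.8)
The plan is to mirror the structure of the proof of \cref{prop:avg_diskewbrace}, exploiting the fact that \cref{prop:avg_diskewbrace2} is genuinely the dual statement obtained by reversing the roles of the two distributivity laws. First I would recall that under the hypothesis the additive structure is fixed by the averaging pair $(f,g)$ via $a \vd b = f(a)\,b$ and $a \dv b = a\,g(b)$, so that by \cref{ex_KU} the set of bar-units is $E(D) = \ker(f) \cap \ker(g)$ and the unilateral inverses are $I_e(a) = e\,g(a)^{-1}$ and $J_e(a) = f(a)^{-1}\,e$. The multiplicative operation is now $a \circ b = b\,h(a)$, where $h$ is a right averaging operator; note that this is exactly the opposite convention to \cref{prop:avg_diskewbrace}, so one expects the three di-skew brace axioms \eqref{diskew1}, \eqref{diskew2}, \eqref{diskew3} to translate into \eqref{avg_diskew1-2}, \eqref{avg_diskew2-2}, \eqref{avg_diskew3-2} after rewriting every operation in terms of $f$, $g$ and $h$.

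The core of the argument is a direct rewriting, carried out one axiom at a time. For \eqref{diskew1}, namely $a \circ (b \vd c) = a \circ b \vd a^{-1} \vd a \circ c$, I would expand the left-hand side as $(f(b)\,c)\,h(a)$ and the right-hand side using the group-theoretic expression for the unilateral inverse $a^{-1}$ together with the identities \eqref{eq:averag_op_rem}; after cancelling the free parameter $c$ on the right, the surviving relation between $f$, $h$ reduces to \eqref{avg_diskew1-2}. The treatment of \eqref{diskew2} is analogous but uses the $\dv$-description and the $g$-side of \eqref{eq:averag_op_rem}, producing \eqref{avg_diskew2-2} after isolating the coefficient of $c$. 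Finally \eqref{diskew3}, which reads $(a \vd b) \circ c = (a \dv b) \circ c$, becomes $c\,h(f(a)\,b) = c\,h(a\,g(b))$, and left-cancelling $c$ in the group $D$ yields exactly \eqref{avg_diskew3-2}. In each case one must also verify the converse direction, i.e.\ that assuming \eqref{avg_diskew1-2}--\eqref{avg_diskew3-2} allows one to retrace the computation and recover \eqref{diskew1}--\eqref{diskew3}; since every step is an equivalence once the free parameter $c$ is reinserted, this follows immediately.

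The step I expect to be the main obstacle is \eqref{avg_diskew2-2}, which is the only one of the three that is not simply a clean cancellation: the factor $g(a)^{-1}$ and the interplay $g(b)h(a) = h(a)g(a)^{-1}g(b\,h(a))$ encode a twisting by the inverse that arises because the additive inverse $a^{-1}$ enters the axiom \eqref{diskew2} on the bar side. Concretely, after substituting $a \circ (b \dv c) = (b\,g(c))\,h(a)$ on the left and expanding the right-hand side $a \circ b \dv a^{-1} \dv a \circ c$ into $b\,h(a)\,g(a^{-1})\,g(c\,h(a))$ using $J_e$ for $a^{-1}$, one must massage the occurrences of $g$ applied to products via the right-averaging identity for $g$ in \eqref{ex_KU_2}; the bookkeeping of which argument sits inside $g$ is where sign-of-inverse errors are most likely. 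Apart from this, the verification is the routine computation that the statement of the proposition signals should be omitted, so I would record only the three translations above and remark that each equivalence is reversible, thereby establishing the \emph{if and only if}.
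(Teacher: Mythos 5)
Your proposal is correct and matches the paper's (implicit) proof: the paper states \cref{prop:avg_diskewbrace2} as the dual of \cref{prop:avg_diskewbrace}, whose proof is exactly the routine rewriting of \eqref{diskew1}--\eqref{diskew3} in terms of $f$, $g$, $h$ via \eqref{eq:averag_op_rem} that you carry out, and your translations (including the expansion $b\,h(a)\,g(a)^{-1}g(c\,h(a))$ for the right-hand side of \eqref{diskew2}) are accurate. The only cosmetic slip is that in the \eqref{diskew2} computation the parameter cancelled is $b$ (by left cancellation in the group), not a ``coefficient of $c$'', but this does not affect the argument.
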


\smallskip 

\begin{rem}\label{rem:avg_diskew_sol}
    It is worth computing explicitly the solutions determined thanks to \cref{prop:avg_diskewbrace}.
    With this aim, let $(f,g)$ be a compatible averaging pair on a group $D$ and $h$ a left averaging operator as in \cref{prop:avg_diskewbrace}.
    Let $r$ be the solution associated with the di-skew brace $(D,\vd,\dv,\circ)$ provided by $(f,g)$ thanks to \cref{theor:diskew_sol}.
    Then, an easy application of \eqref{eq:averag_op_rem} yields
    \begin{align*}
        r(x,y) =
        \left(f(x)^{-1} h(x)\, y, \ 
        h(f(x)^{-1}h(x)\,y)^{-1} x\, g(f(x)^{-1}h(x)y)
        \right)
    \end{align*}
    for all $x,y \in D$. 
    Similarly, if $h:D \to D$ is a right averaging operator on $D$, then in the context and notation of \cref{prop:avg_diskewbrace2} it determines a di-skew brace structure whose associated solution is given by
    \begin{align*}
        r(x,y) =
        \left(f(x)^{-1}yh(x) \,,\,
        x\,g(f(x)^{-1}yh(x))\, h(f(x)^{-1}yh(x))^{-1}
        \right),
    \end{align*}
    for all $x,y \in D$.
\end{rem}

\smallskip

In the notation of \cref{prop:avg_diskewbrace} we provide the following class of examples.

\begin{ex}\label{ex:avg_diskewbrace}
    Let $f,g$ be idempotent endomorphisms on a group $D$ such that $fg = f$ and $gf = g$ and let $(D,\vd,\dv)$ be the associated $g$-digroup.
    Consider a further idempotent endomorphism $h$ on $D$ such that $\ker(f) = \ker(h) = \ker(g)$.
        Then, following \cref{prop:avg_diskewbrace}, we deduce that $(D,\vd,\dv,\circ)$ is a di-skew brace whose associated solution is given by
        \begin{align*}
            r(x,y) = \left(f(x)^{-1}h(x)\,y,\, h(y)^{-1} x\,g(y)\right),
        \end{align*}
        for all $x,y\in D$.
\end{ex}

\begin{rem}
    As a final remark, let us point out that the solutions obtained through \cref{theor:diskew_sol} do not necessarily satisfy an identity analogous to the Lu--Yan--Zhu condition \cite{LuYZ00}.
    Indeed, consider $D =\{1,\, a,\, b,\, ab\}$ the Klein group and let $f,h:D\to D$ be defined by setting $f(a) = 1$, $f(b) = b$, and $h(a) = 1$, $h(b) = ab$, respectively. 
    Then, $f,h$ are idempotent endomorphisms on $D$ satisfying $\ker(f) = \ker(h)$.
    {If we consider the di-skew brace induced by the compatible pair $(f,f)$ together with $h$ as in \cref{ex:avg_diskewbrace}, then the following holds:
    \begin{align*}
        \lambda_a(b) \circ \rho_b(a) &= h\left(\lambda_a(b)\right)\rho_b(a) 
        = h\left(f(a)^{-1}h(a) b\right) h(b)^{-1} a f(b) \\
        &= h(b) h(b)^{-1} a f(b)
        = a f(b) 
        = a b \,,
    \end{align*}
    while $a \circ b = h(a)b = b$, thus proving the assertion.}
\end{rem}

\smallskip

\section{The hemi-semidirect product of solutions}\label{Sec4}

The aim of this section is to introduce the construction of \emph{hemi-semidirect product of twists} and prove that di-skew brace solutions can be encoded in this way.
In doing this, we also show that our notion is an instance of \emph{dynamical extension} of solutions, which is a general method to construct new solutions based on an analogous technique for self-distributive structures as in \cite{AndGr03}.
We point out that our treatment is equivalent to  \cite{CasCatSte22} in the setting of $q$-cycle set but is suitably translated in the language of twists, thus highlighting the role of the underlying shelf structure.
Lastly, as an application, we delve deeper in the properties of di-skew brace solutions.

\medskip

\begin{defin}\label{def:dynamical-pair}
    Let $(G,\tr)$ be a shelf equipped with an admissible twist $\lambda:G \to \Aut(G,\tr)$ and let $E$ be a set.
    A pair $\left(\alpha,\beta\right)$ is said to be a \emph{dynamical pair on $G$ with coefficients in~$E$} if $\alpha,\beta: G\times G \to \Map(E \times E, E)$ are maps such that $\beta_{a,b}(s,{-}):E \to E$ is bijective, for all $a,b \in G$, $s \in E$, and the identities
        \begin{align*}
            \alpha_{a,b \tr c}
            \left( u,\alpha_{b,c}(s,t) \right) 
            &=
            \alpha_{a \tr b,a \tr c}
            \left( \alpha_{a,b}(u,s),\alpha_{a,c}(u,t) \right) 
            \,,\\[0.2cm]
            \alpha_{a \cdot b, a \cdot c}
            \left( \beta_{a,b}(u,s), \beta_{a,c}(u,t) \right)
            &=
            \beta_{a,b \tr c}
            \left( u, \alpha_{b,c}(s,t) \right)
            \,,\\[0.2cm]
            \beta_{a \cdot b, a \cdot c}
            \left( \beta_{a,b}(u,s), \beta_{a,c}(u,t) \right)
            &=
            \beta_{b \cdot (b \tr a), b \cdot c}
            \left( \beta_{b,b \tr a}(s,\alpha_{b,a}(s,u)), \beta_{b,c}(s,t) \right) \,,
        \end{align*}
        hold, for all $a,b,c \in G$, $u,s,t \in E$, where we set $x \cdot y := \lambda^{-1}_x(y)$, for all $x,y \in G$.
\end{defin}

Note that if~$(G,\tr)$ is a shelf then a map~$\alpha:G \times G \to \Map(E \times E,E)$ satisfying the first identity of \cref{def:dynamical-pair} is called a \emph{dynamical $2$-cocycle} on $G$ with coefficients in $E$.
In this context, thanks to \cite[Lemma~2.1]{AndGr03}, the binary operation $\tr_\alpha$ on $G \times E$ defined by setting 
\begin{align}\label{eq:dynamical_shelf}
    (a,s) \tr_\alpha (b,t) &= (a \tr b, \alpha_{a,b}(s,t)) \,,
\end{align}
for all $a,b \in G, s,t \in E$, endows $G \times E$ with a shelf structure called a \emph{dynamical (shelf) extension} of $G$ (via $\alpha$ with coefficients in $E$).

\begin{rem}
    The relationship between admissible twists and $q$-cycle sets has already been described in \cite[Remark 2.19]{DoRySte24}.
    At this point we limit ourselves to illustrate the connection between \cref{def:dynamical-pair} and \cite[Definition 2]{CasCatSte22}, where $q$-cycle set dynamical pairs (or $q$-dynamical pairs for short) are introduced.
    
    Let $(G,\tr)$ be a shelf equipped with an admissible twist~$\lambda:G \to \Aut(G)$ and let~$(\alpha,\beta)$ be a dynamical pair with coefficients in a set $E$.
    The map ${\gamma':G \times G \to \Map(E \times E,E)}$ defined by setting
    \begin{align}\label{skew-cycle to q-cycle}
        \gamma'_{x,y}(s,t) = \beta_{x,x \tr y}(s, \alpha_{x,y}(s,t)) \,,
    \end{align}
    for all $a,b \in G$, $s,t \in E$, determines a pair $(\beta,\gamma')$ which is a $q$-dynamical pair.
    Conversely, let $(G,\cdot,:)$ be a $q$-cycle set and consider a $q$-dynamical pair $(\gamma,\gamma')$ with coefficients in $E$.
    If we define a map $\alpha: G \times G \to \Map(E \times E,E)$ by setting
    \begin{align}\label{q-cycle to skew-cycle}
        {\alpha_{x,y}(s,t) = \gamma_{x,y}^{-1}\left(s,\gamma'_{x,y}(s,t)\right)} \,,
    \end{align}
    for all $a,b \in G, s,t \in E$, then the pair $(\alpha,\gamma)$ is a dynamical pair.
\end{rem}

\medskip

Before providing \cref{theor:twist_extension}, one of the main results of the section, we prove a small technical lemma.

\begin{lemma}\label{lemma:YB1_shelf}
    Let $(G,\tr)$ be a shelf and let $\lambda:G \to \Aut(G,\tr)$ be a map.
    Then, $\lambda$ is a twist (i.e. satisfies \eqref{eq:twist}) if and only if the the following holds
    \begin{align}\label{eq:equi_first}
        \forall\, a,b \in G \qquad
        \lambda^{-1}_{\lambda^{-1}_a\left(b\right)}\lambda^{-1}_a
        = 
        \lambda^{-1}_{\lambda^{-1}_b\left(b\,\tr\, a\right)}
        \lambda^{-1}_b
    \end{align}
    \begin{proof}
        Note that \eqref{eq:twist} is equivalent to requiring that $\lambda_a\lambda_{\lambda^{-1}_a(b)} = \lambda_b\lambda_{\lambda^{-1}_b(b \tr a)}$ holds, for all $a,b \in G$.
        Therefore, the assertion easily follows after inverting both sides of the above identity.
    \end{proof}
\end{lemma}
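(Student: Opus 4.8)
The plan is to massage the defining identity \eqref{eq:twist} of a twist (written with $\varphi = \lambda$) into a symmetric form and then simply invert it. Writing \eqref{eq:twist} for our map gives
$$\lambda_a\lambda_b = \lambda_{\lambda_a(b)}\,\lambda_{\lambda^{-1}_{\lambda_a(b)}\left(\lambda_a(b)\tr a\right)},$$
for all $a,b \in G$. The key observation is that, for each fixed $a$, the map $\lambda_a$ is a bijection of $G$ (indeed $\lambda_a \in \Aut(G,\tr) \subseteq \Sym(G)$), so the substitution $b \mapsto \lambda^{-1}_a(b)$ is a genuine reparametrisation of the universally quantified variable $b$. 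Performing it, the term $\lambda_a(b)$ collapses to $b$ on the right-hand side, and the identity becomes
$$\lambda_a\,\lambda_{\lambda^{-1}_a(b)} = \lambda_b\,\lambda_{\lambda^{-1}_b\left(b \tr a\right)},$$
which now holds for all $a,b \in G$ exactly when \eqref{eq:twist} does.

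With this symmetric form in hand, I would invert both sides. Since each $\lambda_x$ is invertible, the inverse of a composite $\lambda_x\lambda_y$ is $\lambda_y^{-1}\lambda_x^{-1}$; applying this to both sides yields
$$\lambda^{-1}_{\lambda^{-1}_a(b)}\,\lambda^{-1}_a = \lambda^{-1}_{\lambda^{-1}_b\left(b\tr a\right)}\,\lambda^{-1}_b,$$
which is precisely \eqref{eq:equi_first}. Because every manipulation used --- the reparametrisation $b \mapsto \lambda^{-1}_a(b)$ and the inversion of both sides --- is reversible, the chain of equivalences runs in both directions, delivering the desired ``if and only if''.

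There is no real obstacle here: the argument is a two-step formal manipulation. The only points that genuinely require care are \emph{(i)} confirming that the substitution $b \mapsto \lambda^{-1}_a(b)$ sweeps out all of $G$ for each fixed $a$, so that quantifying over the new variable is equivalent to quantifying over the old one, and \emph{(ii)} correctly reversing the order of the factors when inverting the composite operators. Neither involves the shelf axiom itself beyond the standing assumption that $\lambda$ takes values in the automorphism (hence permutation) group of $(G,\tr)$.
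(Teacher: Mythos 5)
Your proof is correct and follows essentially the same route as the paper: the paper likewise observes that \eqref{eq:twist} is equivalent (via the reparametrisation $b \mapsto \lambda_a^{-1}(b)$, which it states implicitly) to $\lambda_a\lambda_{\lambda^{-1}_a(b)} = \lambda_b\lambda_{\lambda^{-1}_b(b \tr a)}$, and then inverts both sides. Your write-up merely makes explicit the two points the paper leaves tacit, namely that the substitution is a genuine reparametrisation and that inversion reverses the order of composition.
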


\begin{theor}\label{theor:twist_extension}
    Let $(G,\tr)$ be a shelf equipped with an admissible twist {$\lambda:G \to \Aut(G,\triangleright)$} and consider a dynamical pair $(\alpha,\beta)$ of $G$ with coefficients in a set $E$.
    Let $(G \times E,\tr_\alpha)$ be the dynamical extension of $G$ via $\alpha$ and define a map {$\Lambda_{(a,s)}:G\times E\to G\times E$} by setting 
    \begin{align}\label{eq:twist_extension_lambda}
        {\Lambda}_{(a,s)}(b,t) &= \left(\lambda_a(b), \beta^{-1}_{a,\lambda_a(b)}(s,t)\right) \,,
    \end{align}
    for all $a,b \in G$ and $s,t \in E$.
    Then, the map {$\Lambda:G \times E \to \Map(G \times E, G \times E)$} is an admissible twist for $(G \times E,\tr_\alpha)$.
    \begin{proof}
    In the setting of the statement, define a map ${\Lambda_{(a,u)}}:G \times E \to G \times E$, for a fixed pair $(a,u) \in G \times E$, by equation \eqref{eq:twist_extension_lambda}.
    {If we consider the map $\Gamma_{(a,u)}:G \times E \to G \times E$ defined by setting $\Gamma_{(a,u)}(b,v) = \left(\lambda^{-1}_a(b),\beta_{a,b}(u,v)\right)$, for all $b \in G$ and $v \in E$, then the following holds:
    \begin{align*}
        \Lambda_{(a,u)}\Gamma_{(a,u)}(b,v) 
        =
        \left(\lambda_a\lambda_a^{-1}(b),\beta^{-1}_{a,\lambda_a\lambda_a^{-1}(b)}\left(u,\beta_{a,b}(u,v)\right)\right) 
        =
        \left(b,\beta^{-1}_{a,b}\left(u,\beta_{a,b}(u,v)\right)\right) \,,
    \end{align*}
    for all $b \in G$ and $v \in E$, where the last term is equal to $(b,v)$ since the maps $\beta_{a,b}(u,\boldsymbol{\cdot})$ and $\beta^{-1}_{a,b}(u,\boldsymbol{\cdot})$ are inverses of each other by construction.
    With similar calculations it is possible to verify that $\Gamma_{(a,u)}\Lambda_{(a,u)} = \id_{G \times E}$, thus proving that $\Lambda_{(a,u)}$ is invertible with inverse $\Gamma_{(a,u)}$.}
    Moreover, the fact that  ${\Lambda}_{(a,u)} $ belongs to {$\Aut(G\times E,\tr_\alpha)$} is clearly equivalent {to the requirement that} 
    \[
        {\Lambda^{-1}_{(a,u)}}\left((b,v) \tr_\alpha (c,z)\right) 
        = {\Lambda^{-1}_{(a,u)}}(b,v) \tr_\alpha {\Lambda^{-1}_{(a,u)}}(c,z)
    \]
    holds, for all $b,c \in G, v,z \in E$, which in turn is easily seen to be equivalent to the second identity of {\cref{def:dynamical-pair}}.
    To conclude the proof, thanks to \cref{lemma:YB1_shelf} it remains to show that ${\Lambda}$ satisfies \eqref{eq:equi_first}.
    With this aim, fix $a,b,c \in G, u,v,z \in E$ and notice that the left-hand-side in \eqref{eq:equi_first} reads
    \begin{align}\label{eq:I}\tag{I}
        {\Lambda}^{-1}_{\lambda^{-1}_{(a,u)}(b,v)}{\Lambda}^{-1}_{(a,u)}(c,z)
        =
        {\Lambda}^{-1}_{\left(\lambda^{-1}_a(b),\,\beta_{a,b}(u,v)\right)}
        \left(\lambda^{-1}_a(c),\beta_{a,c}(u,z)\right) \,.
    \end{align}
    The first component of \eqref{eq:I} is 
    $\lambda^{-1}_{\lambda^{-1}_a(b)}\lambda^{-1}_a(c)$
    and the second one is
    \begin{align*}
        \beta_{\lambda^{-1}_a(b),\lambda^{-1}_a(c)}
        \left(\beta_{a,b}(u,v), \beta_{a,c}(u,z)\right).
    \end{align*}
    Now, if we note that 
    \begin{align*}
           {\Lambda}^{-1}_{(b,v)}\left((b,v) \tr_\alpha (a,u)\right) 
            =\left(\lambda^{-1}_b(b \tr a), \beta_{b,b \tr a}\left(v,\alpha_{b,a}(v,u)\right)\right)
            =: U \,,
    \end{align*}
    then the right-hand-side in \eqref{eq:equi_first} reads
    \begin{align}\label{eq:II}\tag{II}
        {\Lambda}^{-1}_{\lambda^{-1}_{(b,v)}\left((b,v) \tr_\alpha (a,u)\right)}{\Lambda}^{-1}_{(b,v)}(c,z)
        =
        {\Lambda}^{-1}_{U}
        \left(\lambda^{-1}_b(c),\beta_{b,c}(v,z)\right).
    \end{align}
    The first component of \eqref{eq:II} is equal to
    $\lambda^{-1}_{\lambda^{-1}_b\left(b\tr a\right)}
    \lambda^{-1}_b(c)$,
    while the second component of $\eqref{eq:II}$ is
    \begin{align*}
        \beta_{\lambda^{-1}_b(b \tr a),\lambda^{-1}_b(c)}
        \left(\beta_{b,b \tr a}\left(v,\alpha_{b,a}(v,u)\right),\beta_{b,c}(v,z)\right) \,.
    \end{align*}
    Therefore, the assertion is proved by comparing the components of \eqref{eq:I} and \eqref{eq:II}.
\end{proof}
\end{theor}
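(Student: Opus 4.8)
The plan is to verify directly the three requirements hidden in the phrase \emph{admissible twist}: that each $\Lambda_{(a,u)}$ is a bijection of $G\times E$, that it belongs to $\Aut(G\times E,\tr_\alpha)$, and that $\Lambda$ satisfies the twist identity \eqref{eq:twist}. The guiding principle is that the two coordinates decouple: the $G$-coordinate is governed entirely by $\lambda$ and its known twist property, while the $E$-coordinate is governed by $\beta$ and $\alpha$ through the three identities of \cref{def:dynamical-pair}.

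First I would exhibit an explicit inverse. Setting $\Gamma_{(a,u)}(b,v):=(\lambda^{-1}_a(b),\beta_{a,b}(u,v))$ and substituting into \eqref{eq:twist_extension_lambda}, a direct computation gives $\Lambda_{(a,u)}\Gamma_{(a,u)}=\Gamma_{(a,u)}\Lambda_{(a,u)}=\id_{G\times E}$; the only non-formal point is that $\beta_{a,b}(u,\cdot)$ and $\beta^{-1}_{a,b}(u,\cdot)$ are mutually inverse, which is exactly the standing bijectivity hypothesis on $\beta$ in \cref{def:dynamical-pair}. Thus each $\Lambda_{(a,u)}$ is bijective with inverse $\Gamma_{(a,u)}$.

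Next, since a bijection of shelves is a homomorphism precisely when its inverse is, I would instead check that $\Gamma_{(a,u)}=\Lambda^{-1}_{(a,u)}$ respects $\tr_\alpha$. Expanding $\Gamma_{(a,u)}\!\left((b,v)\tr_\alpha(c,z)\right)$ and $\Gamma_{(a,u)}(b,v)\tr_\alpha\Gamma_{(a,u)}(c,z)$ via \eqref{eq:dynamical_shelf}, the first coordinates coincide because $\lambda_a\in\Aut(G,\tr)$, while the equality of the second coordinates is exactly the second identity of \cref{def:dynamical-pair} read off at $a\cdot b=\lambda^{-1}_a(b)$ and $a\cdot c=\lambda^{-1}_a(c)$. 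Hence $\Lambda_{(a,u)}\in\Aut(G\times E,\tr_\alpha)$.

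Finally, by \cref{lemma:YB1_shelf} it suffices to verify \eqref{eq:equi_first} for $\Lambda$, that is $\Lambda^{-1}_{\Lambda^{-1}_{(a,u)}(b,v)}\Lambda^{-1}_{(a,u)}=\Lambda^{-1}_{\Lambda^{-1}_{(b,v)}((b,v)\tr_\alpha(a,u))}\Lambda^{-1}_{(b,v)}$, which I would evaluate at an arbitrary $(c,z)$ using $\Lambda^{-1}=\Gamma$ and compare coordinatewise. The first coordinates reduce to $\lambda^{-1}_{\lambda^{-1}_a(b)}\lambda^{-1}_a(c)=\lambda^{-1}_{\lambda^{-1}_b(b\tr a)}\lambda^{-1}_b(c)$, i.e. exactly \eqref{eq:equi_first} for $\lambda$, which holds because $\lambda$ is a twist (again via \cref{lemma:YB1_shelf}); after substituting $a\cdot b$, $a\cdot c$, $b\cdot(b\tr a)$, $b\cdot c$ for the relevant $\lambda^{-1}$-expressions, the second coordinates become precisely the two sides of the third identity of \cref{def:dynamical-pair}. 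I expect this last coordinate comparison to be the main obstacle: one must keep track that the inner argument $\alpha_{b,a}(v,u)$ lands in the correct slot and that the $\beta$-indices align with $b\cdot(b\tr a)$ and $b\cdot c$, since an argument or index slip there is the easiest way to break the match. The first identity of \cref{def:dynamical-pair} is not used directly; it is the hypothesis that, through \cite[Lemma~2.1]{AndGr03}, makes $(G\times E,\tr_\alpha)$ a genuine shelf, which the entire argument presupposes.
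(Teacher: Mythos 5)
Your proposal is correct and follows essentially the same route as the paper's own proof: the same explicit inverse $\Gamma_{(a,u)}(b,v)=\left(\lambda^{-1}_a(b),\beta_{a,b}(u,v)\right)$, the same reduction of the automorphism property to the second identity of \cref{def:dynamical-pair} via $\Lambda^{-1}_{(a,u)}$, and the same use of \cref{lemma:YB1_shelf} with the coordinatewise comparison matching the $G$-coordinate to the twist property of $\lambda$ and the $E$-coordinate to the third identity of \cref{def:dynamical-pair}. Your closing remark that the first identity of \cref{def:dynamical-pair} enters only through making $(G\times E,\tr_\alpha)$ a shelf is a correct and worthwhile clarification, though the paper leaves it implicit.
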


\bigskip

We now specialize the results obtained above to introduce the main topic of this section.

\begin{defin}
    Let $(G, \tr)$ be a shelf and let $E$ be a set.
    An \emph{action of $(G,\tr)$ on $E$} is a map $\psi:G \to \Sym(E) %\Map(E,E) 
    \,, a \mapsto \psi_a$ such that the following holds
    \begin{align}\label{eq:shelf_action}
        \forall\, a,b \in G\quad \psi_{a \triangleright b}\psi_a = \psi_a \psi_b \,,
    \end{align}
\end{defin}

In the notation of the above definition, the binary operation $\tr_\psi$ defined on $G \times E$ by setting 
\begin{align*}
    (a,u) \tr_\psi (b,v) := (a \tr b,\, \psi_a(v)) \,,
\end{align*}
for all $a,b \in G, u,v \in E$, determines a shelf structure $(G \times E, \tr_\psi)$ which is known as the \emph{hemi-semidirect product of $(G,\tr)$ and $E$ via $\psi$}. For more details, we refer the reader to~\cite{CrWag14} and~\cite{Kin07}.
The following definition is justified by \cref{prop:twist_hemi_product}.

\medskip
{
Let us briefly recall that a twist $\lambda:G \to \Aut(G,\tr)$ determines a left non-degenerate solution on $G$ defined by $r(a,b) = (\lambda_a(b),\rho_b(a))$, where
\begin{align}\label{eq:rho_map}
    \rho_b(a) = \lambda_{\lambda_a(b)}^{-1}\left(\lambda_a(b) \tr a \right) \,,
\end{align}
for all $a,b \in G$ (cf. \cref{thm:twist_solution}).}

\begin{defin}
    Let $(G,\tr)$ be a shelf equipped with a twist $\lambda:G \to \Aut(G,\tr)$ and let $\psi$ be an action of $(G,\tr)$ on a set $E$. 
    Consider a map $\sigma:G \to \Sym(E)$ satisfying
    \begin{align} 
            \sigma_a\sigma_b &= \sigma_{\lambda_a(b)}\sigma_{\rho_b(a)} \label{eq:hemi_sol1} \, \\
            \sigma_a\psi_b &= \psi_{\lambda_a(b)}\sigma_a \label{eq:hemi_sol2} \,
    \end{align}
    for all $a,b \in G$.
    Then, the pair $(\psi,\sigma)$ is called a \emph{hemi-semidirect pair of $\lambda$ with coefficients in $E$}.
\end{defin}

\begin{prop}\label{prop:twist_hemi_product}
    Let $(G,\tr)$ be a shelf equipped with a twist $\lambda:G \to \Aut(G)$, let $E$ be a set and consider two maps $\psi:G \to \Sym(E)$ and $\sigma:G \to \Sym(E)$.
    If for all $a \in G, u \in E$ we define ${\Lambda}_{(a,u)}:G \times E \to G \times E$ by setting
    \begin{align*}
        {\Lambda}_{(a,u)}(b,v) := (\lambda_a(b),\sigma_a(v)) \,,
    \end{align*}
    for all $b \in G, v \in E$, then the map ${\Lambda}:G \times E \to \Aut(G \times E, \tr_\psi),\, (a,u) \mapsto \lambda_{(a,u)}$ is a twist if and only if $(\psi,\sigma)$ is a hemi-semidirect pair on $G$ with coefficients in $E$.
    \begin{proof}
        For all $a,b \in G$ define two maps $\alpha_{a,b}$, $\beta_{a,b}:G \times G \to \Map(E \times E, E)$ by setting $\alpha_{a,b}(u,v) := \psi_a(v)$ and $\beta_{a,b}(u,v) := \sigma_a(v)$, for all $u,v \in E$.
        A straightforward computation shows that $(\alpha,\beta)$ is a dynamical pair if and only if $(\psi,\sigma)$ is a hemi-semidirect pair, thus proving the assertion thanks to \cref{theor:twist_extension}.
    \end{proof}
\end{prop}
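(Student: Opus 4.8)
The plan is to recognise the map $\Lambda$ as the twist attached to a particularly simple dynamical pair and then invoke \cref{theor:twist_extension}, so that the twist condition on $\Lambda$ is translated into the three identities of \cref{def:dynamical-pair}, which I will match term by term against \eqref{eq:hemi_sol1} and \eqref{eq:hemi_sol2}.

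First I would define $\alpha,\beta:G \times G \to \Map(E \times E,E)$ by setting $\alpha_{a,b}(u,v) := \psi_a(v)$ and $\beta_{a,b}(u,v) := \sigma^{-1}_a(v)$, for all $a,b \in G$ and $u,v \in E$. Two observations make this the right choice. On the one hand, the dynamical shelf extension via $\alpha$ satisfies $(a,u) \tr_\alpha (b,v) = (a \tr b, \psi_a(v))$, which is exactly $\tr_\psi$. On the other hand, since each $\beta_{a,b}(u,\boldsymbol{\cdot}) = \sigma^{-1}_a$ is bijective with inverse $\sigma_a$, the twist attached to $(\alpha,\beta)$ by \eqref{eq:twist_extension_lambda} reads $(b,v) \mapsto \left(\lambda_a(b), \beta^{-1}_{a,\lambda_a(b)}(u,v)\right) = (\lambda_a(b), \sigma_a(v))$, which is precisely the map $\Lambda_{(a,u)}$ of the statement. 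Thus $\Lambda$ coincides with the twist produced by \cref{theor:twist_extension} from $(\alpha,\beta)$, and the proposition reduces to showing that $(\alpha,\beta)$ is a dynamical pair if and only if $(\psi,\sigma)$ is a hemi-semidirect pair.

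The heart of the argument is then to unwind the three identities of \cref{def:dynamical-pair} for this special $(\alpha,\beta)$. Because $\alpha$ and $\beta$ depend only on their first index and on their second $E$-argument, most terms collapse: I expect the first identity to reduce exactly to $\psi_{a \tr b}\psi_a = \psi_a\psi_b$, i.e. to \eqref{eq:shelf_action}, so that $\alpha$ is a dynamical $2$-cocycle precisely when $\psi$ is an action of $(G,\tr)$; the second identity to reduce, after cancelling the bijections $\sigma_a$ and a change of variable $b \mapsto \lambda_a(b)$, to $\sigma_a\psi_b = \psi_{\lambda_a(b)}\sigma_a$, that is \eqref{eq:hemi_sol2}; and the third identity to $\sigma_a\sigma_{\lambda^{-1}_a(b)} = \sigma_b\sigma_{\lambda^{-1}_b(b \tr a)}$.

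The main obstacle is the bookkeeping in this last reduction, and in particular recognising the third identity as \eqref{eq:hemi_sol1}. Here I would use that $x \cdot y = \lambda^{-1}_x(y)$ in \cref{def:dynamical-pair} together with the reparametrisation $b \mapsto \lambda_a(b)$: after this substitution the left-hand side becomes $\sigma_a\sigma_b$, while by the very definition \eqref{eq:rho_map} of $\rho$ one has $\lambda^{-1}_{\lambda_a(b)}(\lambda_a(b) \tr a) = \rho_b(a)$, turning the right-hand side into $\sigma_{\lambda_a(b)}\sigma_{\rho_b(a)}$; this is exactly \eqref{eq:hemi_sol1}. Combining the three reductions shows that $(\alpha,\beta)$ is a dynamical pair if and only if $\psi$ is a shelf action and \eqref{eq:hemi_sol1}--\eqref{eq:hemi_sol2} hold, i.e. if and only if $(\psi,\sigma)$ is a hemi-semidirect pair, and \cref{theor:twist_extension} then yields the claim.
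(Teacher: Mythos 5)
Your proof is correct and is essentially the paper's own argument: you encode $(\psi,\sigma)$ as a dynamical pair $(\alpha,\beta)$ that is constant in the redundant arguments, reduce to \cref{theor:twist_extension}, and carry out explicitly the reduction of the three identities of \cref{def:dynamical-pair} to \eqref{eq:shelf_action}, \eqref{eq:hemi_sol2}, and \eqref{eq:hemi_sol1}, which the paper dismisses as a straightforward computation. The one divergence is that you set $\beta_{a,b}(u,v) = \sigma_a^{-1}(v)$ where the paper sets $\beta_{a,b}(u,v) = \sigma_a(v)$, and your choice is in fact the sharper one: since \eqref{eq:twist_extension_lambda} applies $\beta^{-1}$, the paper's $\beta$ would produce the twist $(b,v) \mapsto \left(\lambda_a(b),\sigma_a^{-1}(v)\right)$ rather than the stated $\Lambda_{(a,u)}$, whereas yours reproduces $\Lambda_{(a,u)}$ literally.
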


It is worth to remark that \cref{prop:twist_hemi_product} is thus configured as an extension of the hemi-semidirect product from the setting of self-distributive structures to that of set-theoretic solutions.
Under this light and in the notation of \cref{prop:twist_hemi_product}, if $\lambda_G$ is a twist on a shelf $(G,\triangleright)$ and if $(\psi,\sigma)$ is a hemi-semidirect pair on $G$ then, denoted by $r_G$ the solution obtained by the twist $\lambda_G$, then the twist ${\Lambda}:G \times E \to \Aut(G \times E, \tr_{\psi})$ is called the \emph{hemi-semidirect product of $r_G$ and $E$ via $(\psi,\sigma)$}.
By a slight abuse of notation, we also let the left non-degenerate solution $r_D$ induced on $D := G \times E$ bear the same name and denote it by 
\begin{align*}
    r_D = r_G\, \text{\scalebox{0.9}{${\dv\vd}$}$_{\psi,\sigma}$}\, r_E \,,
\end{align*}
where we interpret $E$ as equipped with the trivial solution $r_E$.

\medskip

Hereinafter, when clear from the context we denote with $L$ the left multiplication operator of a shelf without explicit mention to the structure it belongs to.

\begin{cor}\label{corol:hemi_product_sol_order}
    Let $(G,r_G)$ be a bijective left non-degenerate solution,
    consider a hemi-semidirect product of $r_G$ and $E$ via $(\psi,\sigma)$ on $G$ with coefficients in a set $E$, with $|E| > 1$, and set 
    \begin{align*}
        m_{\tr_r} 
        &:= \textnormal{inf}\left\{h \in \mathbb{N}\mid \forall\, a,b \in G \quad (L_bL_a)^hL_b^{-h}(b) = b\right\}\,,\\[0.2cm]
        m_\psi 
        &:= \textnormal{inf}\left\{h \in \mathbb{N}\mid \forall\, a,b \in G,\, \forall\, v \in E \quad (\psi_b\psi_a)^h\psi_b^{-h}(v) = v\right\} \,.
    \end{align*}
    Then, $r_G \, \text{\scalebox{0.9}{${\dv\vd}$}$_{\psi,\sigma}$}\, r_E$ has finite order if and only if $m_{\tr_r}$ and $m_\psi$ are both finite.
    In particular, when this happens we have that $o(r_G \, \text{\scalebox{0.9}{${\dv\vd}$}$_{\psi,\sigma}$}\, r_E) = 2\,\textnormal{lcm}(m_{\tr_r}\,,m_\psi)$.
    \begin{proof}
        With the aim of applying \cref{theor:derived_sol_order}, let $a,b \in G$ and $u,v \in E$, set $A:= (a,u)$ and $B:= (b,v)$ and note that
        \begin{align*}
            (L_BL_A)^nL_B^{-n}(B) = 
            \left( (L_bL_a)^nL_b^{-n}(b), (\psi_b\psi_a)^n\psi_b^{-n}(v) \right)
        \end{align*}
        holds, for all $n\in \mathbb{N}$. 
        Now, if there exists $n\in \mathbb{N}$ such that $(L_BL_A)^nL_B^{-n}(B) = A$, for all $a,b \in G$ and $u,v \in E$, then in particular it follows that $(\psi_b\psi_a)^n\psi_b^{-n}$ is constant, a contradiction.
        Therefore, given the hypothesis and thanks to \cref{theor:derived_sol_order}, we deduce that $r_G \, \text{\scalebox{0.9}{${\dv\vd}$}$_{\psi,\sigma}$}\, r_E$ has finite order if and only if $\theta_{\tr_r}$ and $\theta_\psi$ are both finite and that the former is equal to
        \begin{align*}
            2\; \textnormal{min}\left\{n\in\mathbb{N}\mid\forall\,a,b \in G,\, \forall\,u,v \in E\quad \left( (L_bL_a)^nL_b^{-n}(b), (\psi_b\psi_a)^n\psi_b^{-n}(v) \right) = (b,v) \right\}\,,
        \end{align*}
        from which follows that $o(r_G \, \text{\scalebox{0.9}{${\dv\vd}$}$_{\psi,\sigma}$}\, r_E) = 2\;\textnormal{lcm}(m_{\tr_r}\,,m_\psi)$.
    \end{proof}
\end{cor}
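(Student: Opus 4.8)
The plan is to reduce the whole statement to a single application of \cref{theor:derived_sol_order} to the rack $(D,\tr_\psi)$, where $D = G \times E$. First I would recall that the hemi-semidirect product solution $r_D := r_G \, \text{\scalebox{0.9}{${\dv\vd}$}$_{\psi,\sigma}$}\, r_E$ is the left non-degenerate solution induced by the twist $\Lambda$ of \cref{prop:twist_hemi_product}, so that $(D,\tr_\psi)$ is its left derived rack and $o(r_D) = o(r_{\tr_\psi})$ by \cref{prop:derived_sol} (as in the discussion preceding \cref{powers_prop}). Since $r_G$ is bijective, its left derived shelf $(G,\tr)$ is a rack, and each $\psi_a \in \Sym(E)$; hence $L_{(a,u)}(b,v) = (L_a(b),\psi_a(v))$ acts componentwise and independently of $u$, so $(D,\tr_\psi)$ is a rack with $|D| > 1$ and \cref{theor:derived_sol_order} applies. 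Iterating this factorization gives, for $A = (a,u)$ and $B = (b,v)$,
\begin{align*}
    (L_BL_A)^nL_B^{-n}(B) = \left( (L_bL_a)^nL_b^{-n}(b),\ (\psi_b\psi_a)^n\psi_b^{-n}(v) \right) \,,
\end{align*}
for all $n \in \mathbb{N}$.

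Next I would show that $M_D = \infty$. Indeed, if some $n$ satisfied $(L_BL_A)^nL_B^{-n}(B) = A$ for all $A,B \in D$, then the $E$-component would read $(\psi_b\psi_a)^n\psi_b^{-n}(v) = u$ for all $u,v \in E$; but the left-hand side does not depend on $u$, whereas $u$ ranges over all of $E$, which is impossible because $|E| > 1$. Whether or not $(D,\tr_\psi)$ is a quandle, \cref{theor:derived_sol_order} then yields that $r_D$ has finite order if and only if $N_D$ is finite, in which case $o(r_D) = 2N_D$: in the non-quandle case this is immediate, while in the quandle case $M_D = \infty$ forces $\min\{M_D,N_D\} = N_D$ and, when $N_D$ is finite, $N_D < M_D$, so subcase (b) applies.

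It then remains to identify $N_D$. The factorization shows that $(L_BL_A)^nL_B^{-n}(B) = B$ holds for all $A,B$ exactly when the two conditions $(L_bL_a)^nL_b^{-n}(b) = b$ (for all $a,b \in G$) and $(\psi_b\psi_a)^n\psi_b^{-n}(v) = v$ (for all $a,b \in G$, $v \in E$) hold simultaneously; writing $N_1$ and $N_2$ for the respective sets of such $n$, we have $N_D = \inf(N_1 \cap N_2)$ while $m_{\tr_r} = \inf N_1$ and $m_\psi = \inf N_2$. The key is the arithmetic identity $\inf(N_1 \cap N_2) = \lcm(\inf N_1,\inf N_2)$, which I would derive by showing that each $N_i$ equals the set of positive multiples of its minimum. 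For $N_2$ this is transparent once one rewrites the condition as the operator identity $(\psi_b\psi_a)^n = \psi_b^n$: if it holds for $n$ and for $m$ then it holds for $n+m$ and, when $n>m$, for $n-m$, so $N_2$ is closed under sums and positive differences, hence is the set of multiples of $m_\psi$ by a Euclidean-division argument. For $N_1$ the same closure follows from \cref{prop-even-odd}, which identifies $N_1$ with $\{n : r_\tr^{2n} = \id\}$, a set of the same multiplicative shape. Consequently $N_1 \cap N_2$ is the set of common multiples of $m_{\tr_r}$ and $m_\psi$, so $N_D = \lcm(m_{\tr_r},m_\psi)$, whence $r_D$ has finite order precisely when both $m_{\tr_r}$ and $m_\psi$ are finite and, in that case, $o(r_D) = 2\,\lcm(m_{\tr_r},m_\psi)$.

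The main obstacle I anticipate is precisely this last arithmetic step. The componentwise factorization and the vanishing of $M_D$ are direct, but converting ``smallest $n$ satisfying both conditions'' into an $\lcm$ genuinely requires the saturation fact that $N_1$ and $N_2$ coincide with the multiples of their minima. Care is needed here: closure under multiples alone would not suffice to conclude that the minimum divides every element, and it is the difference-closure that makes the Euclidean argument work and rules out pathological solution sets.
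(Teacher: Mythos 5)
Your proposal is correct and takes essentially the same route as the paper: apply \cref{theor:derived_sol_order} to the hemi-semidirect rack on $G \times E$, factor $(L_BL_A)^nL_B^{-n}(B)$ componentwise, and use $|E|>1$ to rule out the odd-order case, so that only $N_D$ matters. The one genuine difference is that you explicitly justify the final arithmetic step $N_D = \inf(N_1 \cap N_2) = \lcm(m_{\tr_r}, m_\psi)$ via the saturation argument (each $N_i$ is closed under sums and positive differences, hence consists exactly of the multiples of its minimum), a point the paper's proof asserts without justification --- a worthwhile tightening rather than a different approach.
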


\smallskip

\begin{ex}\,
    \begin{enumerate}
        \item[1)] If $(G,r_G)$ is a left non-degenerate bijective solution, then $(L,\,\lambda)$ is a hemi-semidirect pair on $G$ with coefficients in itself.
        \item[2)] Let $(G,r_G)$ be a finite bijective non-degenerate solution having even order and $E$ a non-empty set.
        Furthermore, fix $f,g \in \Sym(E)$ such that $fg = gf$ and define two maps $\psi,\sigma:D \to \Sym(E)$ by setting $\psi_a = f$ and $\sigma_a = g$, for all $a \in D$.
        Then, $(\psi,\sigma)$ is a hemi-semidirect pair on $G$ with coefficients in $E$.
        Moreover, the hemi-semidirect product of $r_G$ and $E$ via $(\psi,\sigma)$ satisfies $o(r_G \, \text{\scalebox{0.9}{${\dv\vd}$}$_{\psi,\sigma}$}\, r_E) = 2 \textnormal{lcm}(m_{\tr_r},o(f))$.
    \end{enumerate}
\end{ex}

\smallskip

We now show that the solution associated with a di-skew brace is indeed a hemi-semidirect product of its groupal and idempotent components. 
To prove this result, in the decomposition of a $g$-digroup - with respect to a fixed bar-unit $0 \in E$ - we give priority to the left groupal part $G$ and briefly refer to this fact by writing $D = G \,\text{\scalebox{0.9}{$\dv\vd$}}\, E$.

\begin{theor}\label{theor:diskew_twist_decomp}
    Let $(D,\vd,\dv,\circ)$ be a di-skew brace with $D = G \,\text{\scalebox{0.9}{$\dv\vd$}}\, E$ and let $r$ be its associated solution.
    Then, there exists a skew brace structure on $G$ such that
    \begin{align*}
        r = r_G\, \text{\scalebox{0.9}{${\dv\vd}$}$_{\psi,\sigma}$}\, r_E \,,
    \end{align*}
    where $(\psi,\sigma)$ is a hemi-semidirect pair on $G$ with coefficients in $E$.
    \begin{proof}
        Let $D = G \,\text{\scalebox{0.9}{$\dv\vd$}}\, E$ and define a binary operation ${\bullet}$ on $G$ by setting $a \bullet b := a \circ b \vd 0$, for all $a,b \in G$.
        It is straightforward to verify that $(G,\vd,\bullet)$ is a skew brace. Moreover, following \cref{rem:diskew_decomp}, if $\lambda:(D,\circ) \to \Aut(D,\vd,\dv)$ is the lambda map of $D$ then the twist associated with $(G,\vd,\bullet)$ is the map $\varphi:(G,\bullet) \to \Aut(G,\vd)$ defined by setting $\varphi_a(b) = \lambda_a(b) \vd 0$, for all $a,b \in G$.
        Now, let us define two maps $\psi,\sigma:G \to \Map(E,E)$ by setting
        \begin{align*}
            \psi_g(e) := g \vd e \dv g^{-1}
            \quad\textnormal{and}\quad
            \sigma_g(e) := \lambda_g(e) \,,
        \end{align*}
        for all $g \in G$ and $e \in E$ and let us prove that $(\psi,\sigma)$ is a hemi-semidirect pair of $\varphi$ with coefficients in $E$.
        With this aim, let $a,b \in G$, $u \in E$ and note that
        \begin{align*}
            \sigma_a\psi_b(u) 
            &=
            \lambda_a(b \vd u \dv b^{-1})
            =
            \lambda_a(b) \vd \lambda_a(u) \dv \lambda_a^{-1}(b)\\
            &=
            \varphi_a(b) \vd \sigma_a(u) \dv \varphi_a^{-1}(b)\\
            &=
            \psi_{\varphi_a(b)}\sigma_a(u) \,,
        \end{align*}
        thus showing that \eqref{eq:hemi_sol2} holds.
        {Now, if we let $\rho_b:G \to G$ be defined by \eqref{eq:rho_map} with respect to the twist $\varphi:G \to \Aut(G,\vd)$, for all $b \in G$, then the following holds
        \begin{align*}
            \sigma_{\varphi_a(b)}\sigma_{\rho_b(a)}(u) 
            =
            \sigma_{a \bullet b}(u)
            =
            \lambda_a\lambda_b(u)
            =
            \sigma_a\sigma_b(u) \,,
        \end{align*}
        for all $a,b \in G$ and $u \in E$,} thus obtaining \eqref{eq:hemi_sol1}.
        To finish the proof, it is sufficient to define $F:D \to G \times E$ by setting $F(a) = (g_a,e_a)$, for all $a \in D$, and verify that it is indeed an equivalence of solutions between $r_D$ and $r_G\, \text{\scalebox{0.9}{${\dv\vd}$}$_{\psi,\sigma}$}\, r_E$.
    \end{proof}
\end{theor}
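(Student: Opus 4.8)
The plan is to assemble the decomposition in three stages---skew brace, hemi-semidirect pair, and equivalence of solutions---leaning on the structural material of \cref{Sec1} and the twist description of $r_D$. The essential observation is that by \cref{theor:diskew_sol} the left derived rack of $r_D$ is $\Conj(D,\vd,\dv)$, and \cref{lemma:conj_rack_decomp} already presents this rack as a hemi-semidirect product of the conjugation shelf on the groupal part $G$ and the trivial shelf on $E$, via the conjugation action $\psi$ of $G$ on the idempotents. Thus the first-component skeleton of the desired decomposition is already available at the level of racks; the work is to lift it to twists and then to full solutions.

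First I would construct the skew brace. Setting $a\bullet b:=a\circ b\vd 0$ on $G$, I would check that $(G,\bullet)$ is a group---associativity and inverses descend from the right group $(D,\circ)$ after projecting onto $G=D\vd 0$---and that the skew-brace distributivity holds, so that $(G,\vd,\bullet)$ is a skew brace. Its associated twist is $\varphi_a(b)=a^{-1}\vd(a\bullet b)=\lambda_a(b)\vd 0$, which by \cref{rem:diskew_decomp} is exactly the groupal component of $\lambda_a(b)$; in the same remark the idempotent component of $\lambda_a(b)$ is $\lambda_{g_a}(e_b)$, which motivates setting $\sigma_g:=\lambda_g|_E$. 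Since each $\lambda_g$ is a $g$-digroup automorphism fixing $E(D)$ setwise (\cref{digroup_lambda}), $\sigma_g\in\Sym(E)$.

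Next I would verify that $(\psi,\sigma)$ is a hemi-semidirect pair for $\varphi$. For \eqref{eq:hemi_sol2} I would expand $\sigma_a\psi_b(u)=\lambda_a(b^{-1}\vd u\dv b)$ and distribute $\lambda_a$ across $\vd$ and $\dv$ via \cref{digroup_lambda}; since $\psi$ depends only on the groupal component and $g_{\lambda_a(b)}=\varphi_a(b)$, the result rearranges to $\psi_{\varphi_a(b)}\sigma_a(u)$. For \eqref{eq:hemi_sol1} I would first observe that $\sigma$ is a homomorphism $(G,\bullet)\to\Sym(E)$: using $a\bullet b=g_{a\circ b}$ and the fact that $\lambda_x$ and $\lambda_{g_x}$ agree on $E$, one gets $\sigma_{a\bullet b}=\lambda_{g_{a\circ b}}|_E=\lambda_{a\circ b}|_E=\lambda_a\lambda_b|_E=\sigma_a\sigma_b$, the penultimate step being that $\lambda$ is a $\circ$-homomorphism (\cref{digroup_lambda}). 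Combining this with the standard skew-brace identity $\varphi_a(b)\bullet\rho_b(a)=a\bullet b$ then yields $\sigma_{\varphi_a(b)}\sigma_{\rho_b(a)}=\sigma_{a\bullet b}=\sigma_a\sigma_b$, as required.

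The main obstacle is the last stage: showing that $F(a):=(g_a,e_a)$ is an equivalence between $r_D$ and $r_G\, \text{\scalebox{0.9}{${\dv\vd}$}$_{\psi,\sigma}$}\, r_E$. I would use the characterization of equivalences of left non-degenerate solutions (an isomorphism of left derived racks that additionally intertwines the $\lambda$-maps; see the discussion following \cref{thm:twist_solution}), which reduces the claim to two checks. The first, that $F$ is a rack isomorphism from $\Conj(D,\vd,\dv)$ onto the hemi-semidirect rack $(G\times E,\tr_\psi)$, is precisely the content of \cref{lemma:conj_rack_decomp}. The second, that $F$ intertwines the twists, is $F(\lambda_a(b))=(\varphi_{g_a}(g_b),\sigma_{g_a}(e_b))=\Lambda_{F(a)}(F(b))$, which is immediate from \cref{rem:diskew_decomp}; bijectivity of $F$ is clear from uniqueness of the $G \,\text{\scalebox{0.9}{$\dv\vd$}}\, E$ decomposition. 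I expect the genuinely delicate point throughout to be not any single deep step but the constant reconciliation of the additive and multiplicative decompositions---in particular the repeated use of \cref{lemma:diskew_mult} and of the fact that $\lambda_x$ depends only on the groupal component of $x$ when evaluated on $E$---and, were one instead to verify $(F\times F)r_D=r_{G\times E}(F\times F)$ directly on both components rather than through the shelf characterization, this bookkeeping is exactly where errors are most likely to arise.
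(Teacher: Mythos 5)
Your proposal is correct and takes essentially the same route as the paper's own proof: the same skew brace $(G,\vd,\bullet)$ with $a\bullet b = a\circ b\vd 0$ and twist $\varphi_a(b)=\lambda_a(b)\vd 0$, the same pair $\sigma_g=\lambda_g|_E$ and conjugation action $\psi$, the same verification of \eqref{eq:hemi_sol1}--\eqref{eq:hemi_sol2} via the multiplicativity of $\lambda$ and the skew-brace identity $\varphi_a(b)\bullet\rho_b(a)=a\bullet b$, and the same final equivalence $F(a)=(g_a,e_a)$, which you merely verify in more detail (via \cref{lemma:conj_rack_decomp}, \cref{rem:diskew_decomp}, and the characterization of equivalences following \cref{thm:twist_solution}) than the paper does. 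The only divergence is notational: you take $\psi_b(u)=b^{-1}\vd u\dv b$ as in \cref{lemma:conj_rack_decomp}, which is indeed the convention needed for $F$ to identify the derived racks, whereas the paper's proof writes the inverse conjugation $g\vd e\dv g^{-1}$.
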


\smallskip

\begin{cor}\label{corol:diskew_sol_order}
    Let $(D,\vd,\dv,\circ)$ be a di-skew brace, let $r = r_G\, \text{\scalebox{0.9}{${\dv\vd}$}$_{\psi,\sigma}$}\, r_E$ be its associated solution and set
    \begin{align*}
        m_\psi := \textnormal{inf}\left\{h \in \mathbb{N}\mid \forall\, a,b \in D,\, \forall\, v \in E \quad (\psi_b\psi_a)^h\psi_b^{-h}(v) = v\right\} \,.
    \end{align*}
    Then, $r$ has finite order if and only if $G/Z(G)$ is periodic and $m_\psi$ is finite.
    In particular, when this happens $r$ has even order and $o(r) = 2\,\textnormal{lcm}\left(\exp(G/Z(G)),\,m_\psi\right)$.
    \begin{proof}
        The assertion follows after a straightforward application of \cref{corol:hemi_product_sol_order}.
        Indeed, under its notation, it is sufficient to note that $o(r) = 2\,\lcm(m_{\tr_{r_G}}, m_\psi)$ and recall that $m_{\tr_{r_G}} = \exp(G/Z(G))$, as it is clear from \cref{corol:conj_sol_order} and its proof.
    \end{proof}
\end{cor}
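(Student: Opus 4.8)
The plan is to reduce the statement to the order formula for hemi-semidirect products established in \cref{corol:hemi_product_sol_order}, applied to the decomposition $r = r_G\,{\dv\vd}_{\psi,\sigma}\,r_E$ furnished by \cref{theor:diskew_twist_decomp}. Since that theorem produces a genuine skew brace structure $(G,\vd,\bullet)$ together with a hemi-semidirect pair $(\psi,\sigma)$ with coefficients in $E$, the hypotheses of \cref{corol:hemi_product_sol_order} are met (at least when $|E|>1$), and it immediately yields that $r$ has finite order precisely when the two invariants $m_{\tr_{r_G}}$ and $m_\psi$ are both finite, in which case $o(r) = 2\,\lcm(m_{\tr_{r_G}},m_\psi)$. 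Thus the whole problem collapses to identifying $m_{\tr_{r_G}}$ with $\exp(G/Z(G))$ and recognizing that finiteness of $m_{\tr_{r_G}}$ is equivalent to periodicity of $G/Z(G)$.

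The key step is therefore to compute $m_{\tr_{r_G}}$, the infimum of those $h$ with $(L_bL_a)^hL_b^{-h}(b)=b$ for all $a,b$, taken over the left derived rack of $r_G$. Here I would invoke the fact recalled just after \cref{theor:diskew_sol} that the left derived shelf of a skew brace solution is the conjugation quandle of its additive group, so that this rack is $\Conj(G,\vd)$ and its left multiplication operators are honest conjugations $L_a(b)=a^{-1}\vd b\vd a$. The invariant $m_{\tr_{r_G}}$ then coincides with the quantity $N_D$ of \cref{theor:derived_sol_order} evaluated on the rack $\Conj(G,\vd)$, and the group-theoretic computation carried out inside the proof of \cref{corol:conj_sol_order} shows that the defining condition $(ab)^{-h}b(ab)^h=b$ for all $a,b$ is equivalent to $x^h\in Z(G)$ for all $x$. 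Consequently $m_{\tr_{r_G}}$ is finite exactly when $G/Z(G)$ is periodic, and in that case $m_{\tr_{r_G}}=\exp(G/Z(G))$. Substituting this identification into the output of \cref{corol:hemi_product_sol_order} gives the claim, $o(r)=2\,\lcm(\exp(G/Z(G)),m_\psi)$, which is visibly even.

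The main obstacle, such as it is, is not a hard estimate but rather the bookkeeping needed to match three different indexing conventions: the abstract $M_D$, $N_D$ of \cref{theor:derived_sol_order}, the invariant $m_{\tr_r}$ appearing in \cref{corol:hemi_product_sol_order}, and the concrete $\exp(G/Z(G))$ of \cref{corol:conj_sol_order}. One must also confirm that the quandle branch of \cref{theor:derived_sol_order} cannot contribute an odd-order possibility on the $G$-factor, which is precisely the $M_D=\infty$ argument from the proof of \cref{corol:conj_sol_order} (setting $b$ equal to a unit forces a contradiction), so that only the even branch $o=2N_D$ survives and the overall order is forced to be even.

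A final minor point concerns the degenerate case $|E|=1$ excluded by \cref{corol:hemi_product_sol_order}. There $\psi_a$ is the identity on a one-point set for every $a$, so $m_\psi=1$ holds trivially, $r$ reduces to the skew brace solution $r_G$, and the statement follows directly from \cref{corol:conj_sol_order} with $o(r)=2\exp(G/Z(G))=2\,\lcm(\exp(G/Z(G)),1)$, in agreement with the general formula.
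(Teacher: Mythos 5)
Your proof is correct and follows essentially the same route as the paper's: reduce via \cref{theor:diskew_twist_decomp} to \cref{corol:hemi_product_sol_order}, then identify $m_{\tr_{r_G}} = \exp(G/Z(G))$ (and its finiteness with periodicity of $G/Z(G)$) through the group-theoretic computation inside the proof of \cref{corol:conj_sol_order}. Your explicit treatment of the degenerate case $|E|=1$, which is excluded by the hypotheses of \cref{corol:hemi_product_sol_order} and which the paper's terse proof passes over silently, is a correct and welcome addition.
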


The following example shows that the class of di-skew brace solutions is properly larger -- modulo Drinfel'd isomorphisms -- than the class of solutions obtained through skew braces.

\begin{ex}
    Let $D = \Sym_3$ be the symmetric group on three elements and consider the idempotent endomorphism $f$ of $D$ defined by setting ${f(12) = (12)}$ and $f(123) = \id_D$.
    Then, $f$ is an averaging operator on $D$ that determines a $g$-digroup structure $(D,\vd,\dv)$ by setting $a \vd b = f(a) b$ and $a \dv b = a f(b)$, for all $a,b \in D$.
    The latter satisfies $E(D,\vd,\dv) = \ker(f) = \{\id, (123), (132)\}$.
    
    Now, fix a bar-unit $\xi \in E(D)$, the easiest choice being $\xi = \id_D$, and note that if $a \in D$ then $g_a = a \vd \xi = f(a)$.
    Therefore, we deduce that $D$ decomposes as $D = G \,\text{\scalebox{0.9}{$\dv\vd$}}\, E(D)$, where $G = \langle (12) \rangle$ is the groupal component relative to $\xi$.
    Let $(D,\tr) = \Conj(D,\vd,\dv)$ and consider its associated left derived solution $r$.
    A straightforward application of \cref{corol:hemi_product_sol_order} ensures that $o(r) = 4$.
    On the other hand, let $(S,+,\circ)$ be a skew brace of cardinality $6$ and let $s$ be its associated solution.
    Then, $s$ is either involutive or of order $12$, as a consequence of \cref{corol:conj_sol_order}, thus proving that $r$ and $s$ cannot be Drinfel'd isomorphic.
\end{ex}

As a conclusive observation, it is worth underlining the fact that many other aspects on solutions connected to di-skew braces remain open to further detailed analysis.
This will be indeed the subject of future investigations with which we foresee to deepen the relation between our structures and bijective non-degenerate solutions.

\smallskip

%********
\section*{Acknowledgments}
%********
The authors are all members of GNSAGA (INdAM) and the non-profit association ADV-AGTA. This work was supported by the Dipartimento di Matematica e Fisica ``Ennio De Giorgi" - Università del Salento.
A.~Albano was supported by a scholarship financed by the Ministerial Decree no.~118/2023, based on the NRRP - funded by the European Union - NextGenerationEU - Mission 4.

We thank the referee for carefully reading our manuscript and the remarks and suggestions that helped us improve the paper. 

%--------------------------------------
%\section*{References}
%-------------------------------------
\smallskip

\bibliographystyle{elsart-num-sort}  
\bibliography{bibliography}

\end{document}